\newcommand{\A}{\mathcal A}
\newcommand{\aC}{\apartment_{\ssf\mathbb{C}}}
\newcommand{\apartment}{\mathfrak{a}}
\newcommand{\Atildetwo}{\smash{\widetilde{A}_{\ssf 2}}}
\newcommand{\Atilder}{\smash{\widetilde{A}_{\ssf r}}}
\newcommand{\boldpi}{\boldsymbol{\pi}}
\newcommand{\boldsigma}{\boldsymbol{\sigma}}
\newcommand{\C}{\mathbb{C}}
\newcommand{\cl}[1]{\operatorname{cl}\ssf({#1})}
\newcommand{\Czero}{C_{\ssf0}}
\newcommand{\cone}{c_{\hspace{.15mm}1}}
\newcommand{\ctwo}{c_{\ssf2}}
\newcommand{\const}{\operatorname{const.}}
\renewcommand{\epsilon}{\varepsilon}
\newcommand{\msb}{\hspace{-.4mm}}
\newcommand{\msf}{\hspace{.4mm}}
\newcommand{\N}{\mathbb{N}}
\renewcommand{\Im}{\operatorname{Im}}
\renewcommand{\O}{\mathcal{O}}
\newcommand{\R}{\mathbb{R}}
\renewcommand{\Re}{\operatorname{Re}}
\newcommand{\sector}{\apartment^+}
\newcommand{\ssb}{\hspace{-.25mm}}
\newcommand{\ssf}{\hspace{.25mm}}
\newcommand{\X}{\mathcal{X}}
\newcommand{\vsb}{\hspace{-.1mm}}
\newcommand{\vsf}{\hspace{.1mm}}
\newcommand{\Z}{\mathbb{Z}}
\newtheorem{theorem}{Theorem}[section]
\newtheorem{proposition}[theorem]{Proposition}
\newtheorem{lemma}[theorem]{Lemma}
\newtheorem{corollary}[theorem]{Corollary}
\newtheorem{remark}[theorem]{Remark}
\numberwithin{equation}{section}
\title[Distinguished random walks on affine buildings of type $\Atilder$]{Sharp estimates for distinguished random walks\\on affine buildings of type $\Atilder$}
\author[Anker]
{Jean-Philippe Anker}
\address{Universit\'e d'Orl\'eans, Universit\'e de Tours \& CNRS,
Institut Denis Poisson (UMR 7013),
B\^atiment de Ma\-th\'e\-ma\-ti\-ques,
B.P.~6759, 45067 Orl\'eans cedex 2, France}
\email{anker@univ-orleans.fr}
\author[Schapira]
{Bruno Schapira}
\address{Aix-Marseille Universit\'e \& CNRS,
Institut de Math\'ematiques de Marseille (I2M, UMR 7373),
Groupe Math\'ematiques de l'Al\'eatoire (ALEA), Equipe Probabilit\'es
\linebreak
(PROBA),
Centre de Math\'ematiques et Informatique,
Technop\^ole Ch\^ateau-Gombert,
39 rue Fr\'ed\'eric Joliot-Curie,
13453 Marseille Cedex 13, France}
\email{bruno.schapira@univ-amu.fr}
\author[Trojan]
{Bartosz Trojan}
\address{Politechnika Wroc{\l}awska,
Wydzia{\l} Matematyki,
Wyb.~Wyspia{\'n}skiego 27,
50-370 Wroc{\l}aw, Polska}
\email{bartosz.trojan@gmail.com}
\date{\today}
\subjclass[2010]
{Primary\,:
33C52,
51E24,
60B15,
60J10.
Secondary\,:
20E42,
20F55,
22E35,
31C35,
43A90,
60G50,
60J45.}
\keywords
{Affine building,
random walk,
transition density,
heat kernel,
Green function,
global estimate.}
\begin{document}

\begin{abstract}{We study a distinguished random walk
on affine buildings of type \ssf$\smash{\Atilder}$\ssf,
which was already considered by Cartwright, Saloff-Coste and Woess.
In rank $r\ssb=\ssb2$\ssf,
it is the simple random walk and we obtain optimal global bounds for its transition density
(same upper and lower bound, up to multiplicative constants).
In the higher rank case, we obtain sharp uniform bounds in fairly large space-time regions which are sufficient for most applications.}
\end{abstract}

\maketitle

\centerline{\it
In memory of Gerrit van Dijk (1939-2022)
}

\section{Introduction}

Heat diffusion in the continuous setting, respectively random walks in the discrete setting are extensively studied.
A main issue consists in obtaining sharp upper and/or lower bounds for the heat kernel in the continuous setting,
respectively for transition densities of random walks in the discrete setting.
Actually there are few cases where the same global bound (up to multiplicative constants) have been obtained.
Apart from the Euclidean setting, this was achieved for instance for Riemannian symmetric spaces of noncompact type \cite{AJ,AO}.
\smallskip

This work started as an attempt to obtain similar results for isotropic random walks on affine buildings\ssf;
it remained unpublished \cite{AST}
but we believe that the techniques as well as the result may be interesting for the mathematical community.
More precisely, we study a distinguished random walk on affine buildings of type $\smash{\widetilde{A}_r}$.
This walk was already considered in \cite{SW, CW}\,;
it is the simple random walk in rank $r\msb\le\msb2$ but not in rank $r\msb>\msb2\ssf$.
In rank $r\msb=\msb1$\vsf, affine buildings are homogeneous trees
and the simple random walk was already studied in \cite{L1,L2,W}.
In rank \ssf$r\ssb=\ssb2$\ssf, we obtain the same global upper and lower bound, up to multiplicative constants.
In higher rank \ssf$r\ssb>\ssb2$\ssf, we obtain the same result in a fairly large space-time region, as well as a global upper bound,
which are sufficient for applications, for instance to deduce a global upper and lower bound for the Green function (see \cite{T}).
We recover in particular the spectral gap computed in \cite{SW} and the local limit theorem established in \cite{CW,P3}.
\smallskip

In the meantime, the third author was able to generalize the latter results
to finitely supported isotropic random walks on any affine building \cite{T}.
Nonetheless we consider that this work is worth publishing,
since our results are global for buildings of type $\widetilde{A}_2$
and since our present approach is much simpler than the delicate analysis carried out in \cite{T}.
Our initial aim consisted in analyzing the behavior of isotropic random walks on affine buildings,
in order to describe the Martin compactification of affine buildings.
This was eventually achieved in \cite{RT}.
\smallskip

Our paper is organized as follows.
Basics are recalled in Section \ref{basics}.
Section \ref{sectionA2} is devoted to our main result,
namely a global upper and lower bound for the simple random walk on affine buildings of type $\widetilde{A}_2$.
At the end of Section \ref{sectionA2} and in Section \ref{sectionAr},
we extend in part our results to isotropic nearest neighbor random walks in rank $r\msb=\msb2$
and to to the distinguished random walk in rank $r\msb>\msb2\ssf$.
Appendix \ref{Appendix}  is devoted to some remarkable formulae
for the Fourier transform of any nearest neighbor random walk in rank $r\msb=\msb2$\ssf.
\smallskip

Our method relies on a careful analysis of the transition density using the inverse Fourier transform.
Fourier analysis on $p$-adic like buildings goes back to Macdonald \cite{M1} in the early seventies.
There was no other reference available in the literature for a long time,
until Cartwright \cite{C} and Parkinson \cite{P1, P2} on the one hand,
Mantero and Zappa \cite{MZ1, MZ2} on the other hand,
resumed Fourier analysis on affine buildings.

Our work, as well as all aforementioned ones about buildings, deals with vertices
and it is natural to consider higher dimension simplices in affine buildings.
A first study of random walks on chambers in affine buildings of type $\Atildetwo$ 
is carried out in \cite{PS}, relying on the theory developed in \cite{O}.

\section{Preliminaries}\label{basics}

\subsection{Notation}

Throughtout the paper,
\ssf$\N$ \ssf will denote the set of nonnegative integers \ssf$\{\ssf0,1,2,\ssf\dots\msf\}$ \ssf
and \ssf$\N^*$ the set of positive integers \ssf$\{\vsf1,2,\ssf\dots\msf\}$\ssf.
The maximum of two real numbers $a$ and $b$ \ssf will be denoted by \ssf$a\msb\vee\msb b$\ssf,
and the minimum by \ssf$a\ssb\wedge\ssb b\ssf$.
Let us finally specify the meaning of the following binary symbols
between two nonnegative functions $f$ and $g$\,:
\begin{description}[labelindent=4pt,labelwidth=4mm,labelsep*=1pt,leftmargin=!]
\item[\textbullet]
$f\ssb\lesssim\ssb g$\ssf, resp.~$f\ssb\gtrsim\ssb g$ \ssf means that
there exists a constant \ssf$C\!>\!0$ \ssf such that
$f\ssb\le\ssb C\ssf g$\ssf,  resp.~$f\ssb\ge\ssb C\ssf g$\ssf,
\item[\textbullet]
$f\ssb\approx\ssb g$ \ssf stands for
\ssf$f\ssb\lesssim\ssb g$ \ssf and \ssf$f\ssb\gtrsim\ssb g$\ssf,
\item[\textbullet]
in the case of positive functions,
$f\ssb\sim\ssb g$ \ssf means that the ratio $\smash{\frac fg}$ tends to 1.
\end{description}

\subsection{Root system}

We recall some standard notation and refer e.g.~to \cite{B} for more details.
In the Euclidean space \ssf$\R^{r+1}$, consider the subspace
\vspace{1mm}

\centerline{$
\apartment=\{\ssf z\!\in\!\R^{r+1}\msf|\,z_1\!+\vsf\dots\vsf+\ssb z_{\vsf r+1}\!=\msb0\ssf\}
$}\vspace{1mm}

and the root system of type $A_{\vsf r}$
\vspace{1mm}

\centerline{$
R=\{\ssf e_j\!-\msb e_k\msf|\msf1\!\le\ssb j\msb\ne\ssb k\ssb\le\ssb r\msb+\!1\vsf\}\msf.
$}

Notice that
\begin{description}[labelindent=4pt,labelwidth=4mm,labelsep*=1pt,leftmargin=!]
\item[\textbullet]
the inner product \ssf$\langle\ssf\cdot\ssf,\ssf\cdot\ssf\rangle$
\ssf on \ssf$\R^{r+1}$ extends to a $\C$-bilinear form on \ssf$\C^{\vsf r+1}$
and in particular on the complexification \ssf$\aC\ssb=\apartment\ssb+\ssb i\ssf\apartment$\ssf,
\item[\textbullet]
each root \ssf$\alpha\!\in\!R$ \ssf coincides with its coroot
\ssf$\alpha^\vee\hspace{-.8mm}=\!\frac2{\|\alpha\|^2}\ssf\alpha$\ssf.
\end{description}
Consider the positive subsystem
\vspace{1mm}

\centerline{$
R^+\ssb=\{\ssf e_j\!-\msb e_k\msf|\msf1\!\le\ssb j\msb<\ssb k\ssb\le\ssb r\msb+\!1\vsf\}
$}\vspace{1mm}

in \ssf$R$\ssf, the corresponding positive Weyl sector
\vspace{1mm}

\centerline{$
\sector\ssb=\{\ssf z\!\in\!\apartment\,|\,z_1\!>\vsf\dots\vsf>\msb z_{\vsf r+1}\ssf\}
$}

in \ssf$\apartment$ \ssf and its closure

\centerline{$
\cl{\sector}\ssb=\{\ssf z\!\in\!\apartment\,|\,z_1\!\ge\vsf\dots\vsf\ge\msb z_{\vsf r+1}\ssf\}\msf,
$}

the basis of simple roots

\centerline{$
\alpha_j\msb=\ssb e_j\msb-\ssb e_{j+1}
\qquad(1\!\le\ssb j\msb\le\ssb r)
$}\vspace{1mm}

and the dual basis of fundamental weights
\vspace{1mm}

\centerline{$\displaystyle
\lambda_j=\sum\nolimits_{\vsf1\le k\le j}\hspace{0mm}e_k
-\tfrac j{r+1}\sum\nolimits_{\vsf1\le k\le r+1}\hspace{0mm}e_k
\qquad(1\!\le\ssb j\msb\le\ssb r)\msf.
$}

Notice that

\centerline{$
\sum_{\ssf j=1}^{\,r} \lambda_j
\quad\text{is equal to}\quad
\rho\,=\ssf\frac12\,\sum_{\ssf\alpha \in R^+}\ssb\alpha\;.
$}\vspace{1mm}

Let \ssf$P\ssb=\ssb\sum_{\ssf j=1}^{\,r}\Z\ssf\lambda_j$ be the weight lattice
and \ssf$Q\ssb=\ssb\sum_{\ssf j=1}^{\,r}\Z\ssf\alpha_j$ the root sublattice.
Let \ssf$P^+\hspace{-.5mm}=\ssb P\cap\smash{\cl{\sector}}\hspace{-.5mm}
=\ssb\smash{\sum_{\ssf j=1}^{\,r}}\N\ssf\lambda_j$,
resp.~\ssf$P^{++}\hspace{-.5mm}=\ssb P\cap\sector\hspace{-.5mm}
=\ssb\smash{\sum_{\ssf j=1}^{\,r}}\N^*\lambda_j$,
be the subset of dominant weights, resp.~strictly dominant weights.
The length of \ssf$\lambda\!\in\!P^+$ is \ssf$|\lambda|\ssb
=\ssb\sum_{\ssf j=1}^{\,r}\langle\ssf\alpha_j,\lambda\ssf\rangle$\ssf.
The Coxeter complex is a simplicial complex in \ssf$\apartment$
\ssf whose set of vertices is \ssf$P$.
Its maximal simplices are called chambers.
The fundamental chamber \ssf$\Czero$ has vertices
$\lambda_{\ssf0}\!=\!0\ssf, \lambda_1,\dots,\lambda_{\ssf r}$\ssf.
Define the label function
\,$\tau\ssb:\ssb P\ssb\longrightarrow\ssb\{0,\dots,r\}$
by the following two conditions\,:
\begin{description}[labelindent=4pt,labelwidth=4mm,labelsep*=1pt,leftmargin=!]
\item[\textbullet]
For the fundamental chamber, \ssf$\tau(\lambda_j)\!=\!j$\ssf.
\item[\textbullet]
For each chamber, the labels of vertices are exactly $0\ssf,\dots,r$\ssf.
\end{description}
Let \ssf$W_0\msb=\ssb\mathfrak{S}_{r+1}$ be the Weyl group,
generated by the reflections
\ssf$r_{\!\alpha}(z)\!=\!z\!-\!\langle\alpha, z\rangle\ssf\alpha$
\ssf along roots $\alpha\!\in\!R$\ssf,
and let $W\hspace{-1mm}=\!W_0\!\ltimes\!Q$,
resp.~$\widetilde W\hspace{-1mm}=\!W_0\!\ltimes\!P$
be the affine Weyl group, resp.~the extended Weyl group,
generated by $W_0$
and by the translations \ssf$\tau_\lambda$
along \ssf$\lambda\!\in\!Q$, resp.~$\lambda\!\in\!P$.
Stabilizers will be denoted by subscripts,
for instance $W_0\!=\!\widetilde W_0$
is the stabilizer of $0$ in $W$ \!and in $\widetilde W$\ssb.
Let \,$w\ssb\longmapsto\ssb q_{\ssf w}$
\ssf be an extended parameter function on \ssf$\widetilde W$\ssb.
For the type \ssf$\Atilder$ considered in this work,
recall that there is an integer \ssf$q\!>\!1$ such that
\begin{equation*}
q_{\ssf w}=\ssf q^{\,\ell\ssf(w)}
\qquad\forall\,w\in W_0\ssf,
\end{equation*}
where \ssf$\ell\ssf(w)$ denotes the lenght of \ssf$w$ in $W_0$\ssf, and
\begin{equation*}
q_{\ssf t_\lambda}\!=q^{\ssf2\ssf\langle\rho,\ssf\lambda\rangle}
\qquad\forall\;\lambda\!\in\!P^+.
\end{equation*}
Eventually the following Poincar\'e polynomial
\begin{equation*}
V(q^{-1})=\sum\nolimits_{\vsf w\in V}q_{\ssf w}^{-1}
\end{equation*}
is attached to every subset \ssf$V$ \hspace{-.5mm}of \ssf$W_0$\ssf.

\begin{figure}[ht]
\begin{center}
\psfrag{a+}[c]{\color{red}$\sector$}
\psfrag{alpha1}[c]{\color{red}$\alpha_1$}
\psfrag{alpha2}[c]{\color{red}$\alpha_2$}
\psfrag{alpha1=0}[c]{\color{red}$\alpha_1\!=\ssb0$}
\psfrag{alpha2=0}[c]{\color{red}$\alpha_2\ssb=\ssb0$}
\psfrag{alpha1=alpha2}[c]{\color{red}$\alpha_1\!=\ssb\alpha_2$}
\psfrag{C}[c]{\color{red}$C_0$}
\psfrag{lambda1}[c]{\color{red}$\lambda_1$}
\psfrag{lambda2}[c]{\color{red}$\lambda_2$}
\psfrag{rho}[c]{\color{red}$\rho$}
\includegraphics[width=100mm]{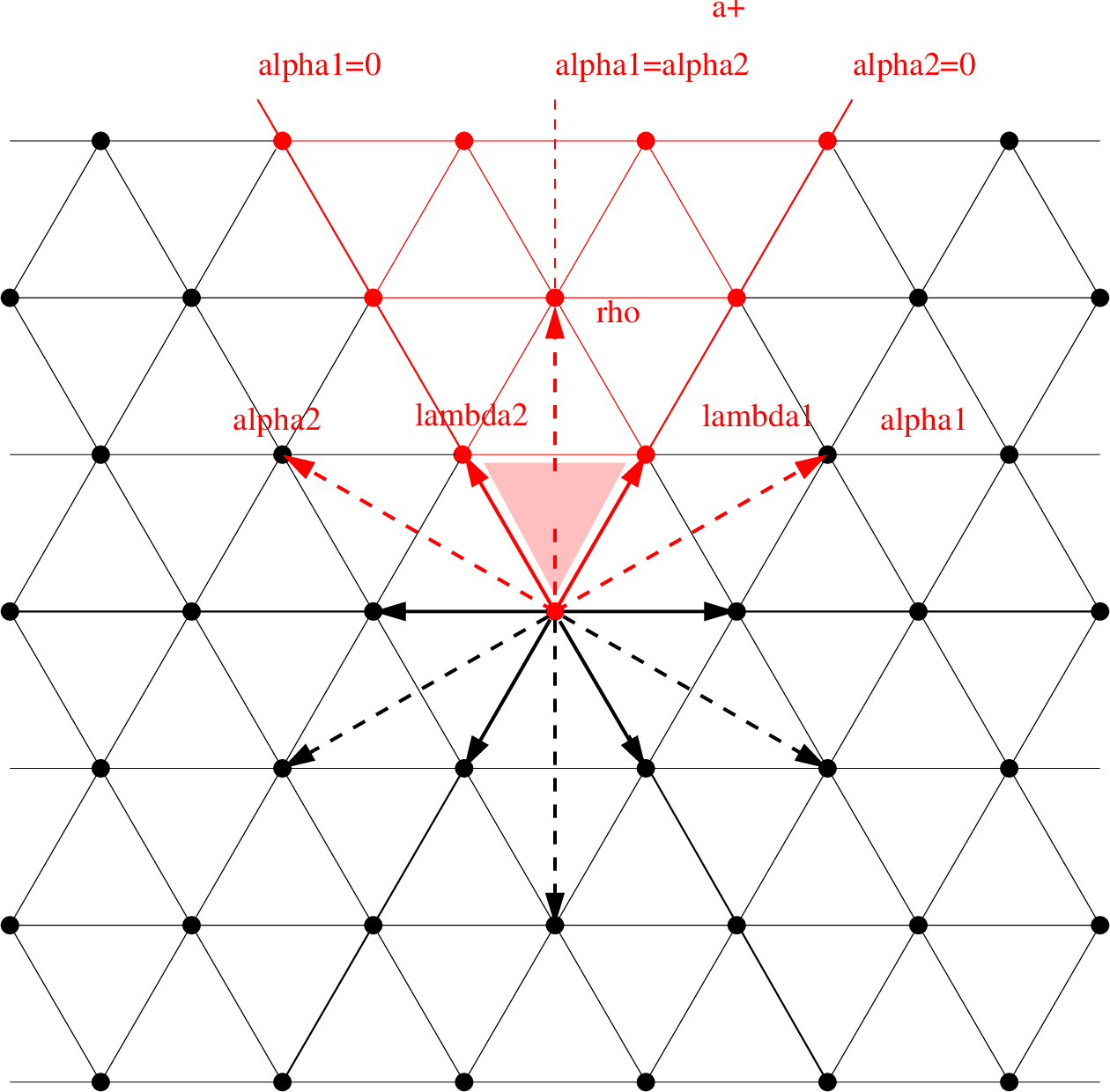}
\end{center}
\caption{Apartment in an affine building of type $\Atildetwo$}
\end{figure}

\subsection{Affine building}

In this subsection, we follow mostly \cite{R}
and refer for more details to \cite{C,P1}.
An affine building of type \ssf$\Atilder$
is a nonempty simplicial complex
containing subcomplexes called apartments such that:
\begin{description}[labelindent=4pt,labelwidth=4mm,labelsep*=1pt,leftmargin=!]
\item[\textbullet]
Each apartment is isomorphic to the Coxeter complex
associated to \ssf$\Atilder$\ssf.
\item[\textbullet]
Any pair of simplices is contained in an apartment.
\item[\textbullet]
Given two apartments which share at least one chamber (simplex of maximal dimension),
there exists a unique isomorphism between them,
which fixes pointwise their intersection.
\end{description}
The building will be assumed to be regular, thick and locally finite.
By definition this means that,
given any chamber \ssf$C$
and any face \ssf$F$ (simplex of codimension 1) of \ssf$C$,
the cardinality of the set of chambers
different from \ssf$C$ and containing \ssf$F$
is independent of \ssf$C$ and \ssf$F$,
and is equal to \ssf$2\ssb\le\ssb q\ssb<\msb\infty$\ssf.
We denote by $\X$ the set of vertices
(simplices of dimension $0$) of the building.
In rank \ssf$r\!=\!1$\ssf,
buildings are homogeneous trees with \ssf$q\ssb+\!1$ \ssf edges.
Fix a base point \ssf$0\!\in\!\X$.
It follows from the above definition that
one can define a label function \,$\tau\ssb:\ssb\X\to\{\ssf0,\dots,r\ssf\}$
\,such that \ssf$\tau(0)\ssb=\ssb0$ \ssf
and all isomorphisms in the definition preserve labels.
Now given \ssf$x\!\in\!\X$,
there exists an apartment \ssf$\smash{\widetilde{P}}$
\ssf containing \ssf$0$ \ssf and \ssf$x$ \ssf
and a label preserving isomorphism
between \ssf$\smash{\widetilde{P}}$ \ssf and \ssf$P$,
sending \ssf$0$ \ssf to \ssf$0$ \ssf
and \ssf$x$ \ssf to an element of \ssf$P^+$\ssb.
The image of \ssf$x$ \ssf by such an isomorphism
is called the radial part of \ssf$x$ \ssf
and will be denoted by \ssf$x^+$.
Let
\begin{equation*}
V_\lambda\ssb(0)
=\{\,y\!\in\!\X\mid y^+\hspace{-1mm}=\!\lambda\,\},
\end{equation*}
be the so-called sphere of radius \ssf$\lambda\!\in\!P^+$
centered at $0$\ssf.
For every $y\!\in\!V_\lambda\ssb(0)$,
we set $|y|\!=\!|\lambda|$ and $y_j\!=\!\langle\alpha_j,\lambda\rangle$.
More generally, the sphere $V_\lambda(x)$
of radius $\lambda\!\in\!P^+$ and center $x\!\in\!\X$
consists of all \ssf$y\!\in\!\X$
such that there exist an apartment $\smash{\widetilde P}$ containing $\{x,y\}$
and an isomorphism between $\smash{\widetilde P}$ and $P$
which preserves labels up to translation 
and which sends \ssf$x$ to $0$ and \ssf$y$ to $\lambda$\ssf.
The cardinality of $V_\lambda\ssb(x)$
is independent of \ssf$x$ and is given by
\begin{equation*}
N_\lambda=\ssf\tfrac{W_0(q^{-1})}
{(W_0\ssf\cap\ssf W_{\ssb\lambda})(q^{-1})}\,
q^{\ssf2\ssf\langle\rho,\ssf\lambda\rangle}\ssf.
\end{equation*}
Finally $(x,y)^+\!=\ssb\lambda\!\in\!P^+$ and $d(x,y)\ssb=\ssb|\lambda|\!\in\![\ssf0,+\infty)$
are the vectorial and scalar distances between $x$ and $y$. Both are invariant under isomorphisms of $\X$.

\subsection{Special functions}
\label{SpecialFunctions}

Consider the fundamental skew invariant polynomial
\begin{equation*}
\boldpi(z)\ssf=\,\prod\nolimits_{\ssf\alpha\in R^+}\langle\ssf\alpha\ssf,\ssf z\ssf\rangle\,,
\end{equation*}
the Weyl denominator
\begin{equation*}
\Delta(z)
=\sum\nolimits_{\vsf w\in W_0}(\det w)\,e^{\,\langle w.\rho,z\rangle}
=\prod\nolimits_{\ssf\alpha \in R^+}
\bigl(\,e^{\frac{\langle\alpha,z\rangle}2}\!
-e^{-\frac{\langle\alpha,z\rangle}2}\ssf\bigr)\,,
\end{equation*}
and the following functions on $\aC$\,:
\begin{equation*}
\mathbf{b}(z)=\prod\nolimits_{\ssf\alpha\in R^+}\bigl(\,1\!-\ssb q^{-1}\ssf e^{-\langle\alpha,z\rangle}\ssf\bigr)\,,
\end{equation*}\vspace{-3.5mm}
\begin{equation}\label{cFunction}
\mathbf{c}(z)=e^{\ssf\langle\rho,z\rangle}\,{\Delta(z)}^{-1}\,\mathbf{b}(z)\ssf
=\prod\nolimits_{\ssf\alpha\in R^+}\ssb\tfrac{1\,-\,q^{-1}e^{-\langle\alpha,z\rangle}}{1\,-\,e^{-\langle\alpha,z\rangle}}\,,
\end{equation}\vspace{-3.5mm}
\begin{equation*}
h(z)=\sum\nolimits_{\ssf j=1}^{\,r}\sum\nolimits_{\ssf\lambda\in W_0.\lambda_j}\ssb e^{\ssf\langle\lambda,z\rangle}\,.
\end{equation*}
Notice that \ssf$\mathbf{b}(z)$ \ssf is bounded on
\ssf$\cl{\sector}\!+\ssb i\ssf\apartment$\ssf,
as well as all its derivatives,
and moreover that \ssf$|\mathbf{b}(z)|$ is bounded there from below.
The function \ssf$h(z)$ \ssf is a linear combination
of symmetric Macdonald functions
\begin{equation}\label{MacdonaldPolynomial}
P_\lambda(z)
=W_0(q^{-1})^{-1}\,q_{\ssf t_\lambda}^{-1/2}\,
\sum\nolimits_{\vsf w\in W_0}\mathbf{c}(w.z)\,e^{\langle\ssf\lambda,\ssf w.z\ssf\rangle}
\end{equation}
(see \cite{M4}, \cite{P2}).
For the type \ssf$\Atilder$ considered in this work,
there is indeed a close connection, namely
\vspace{-4mm}
\begin{equation*}
P_\lambda(z)\ssf
=\ssf\overbrace{\textstyle\frac{(W_0\ssf\cap\ssf W_{\ssb\lambda})(q^{-1})}{W_0(q^{-1})}\,
q^{\ssf-\langle\rho,\ssf\lambda\rangle}}^{N_\lambda^{-1}\,q^{\ssf\langle\rho,\lambda\rangle}}
P_\lambda(e^{\ssf z}\ssf;q^{-1})
\end{equation*}
between symmetric Macdonald polynomials $P_\lambda$
and Hall-Littlewood polynomials $P_\lambda(\,.\,;q^{-1})$,
which boil down to symmetric monomials
when $\lambda$ is a fundamental weight
(see for instance
\cite[pp. 209 \& 299]{M2},
\cite[\S\;10]{M3},
\cite[pp. 99--100]{C}).
Thus
\begin{equation*}
P_{\lambda_j}(z)=N_{\lambda_j}^{-1}\,q^{\ssf\langle\rho,\ssf\lambda_j\rangle}\ssf
\sum\nolimits_{\vsf\lambda\in W_0.\lambda_j}\ssb e^{\ssf\langle\lambda,\ssf z\rangle}
\end{equation*}
\vspace{-5.5mm}

and
\vspace{-.5mm}
\begin{equation*}
h(z)=\sum\nolimits_{\ssf j=1}^{\,r}N_{\lambda_j}\,q^{\ssf-\langle\rho,\ssf\lambda_j\rangle}\ssf P_{\lambda_j}(z)\,.
\end{equation*}
Now the {\it fundamental spherical function} is defined on \ssf$\X$ \ssf by
\begin{equation*}
F_0(x)\ssf=\,P_\lambda(0)
\quad\text{with}\quad\lambda\!=\!x^+.
\end{equation*}
It is a positive eigenfunction of the Hecke algebra $\A$
described in Subsection \ref{Hecke} below.
Specifically, for every $\mu\!\in\!P^+$,
there exists a constant $c_\mu\!>\!0$ such that
\begin{equation}\label{F0eigenfunction}
\sum\nolimits_{y\in V_\mu(x)}\!F_0(y)=c_\mu\ssf F_0(x)\ssf,
\end{equation}
for all $x\!\in\!\X$ (see \cite[Theorem 3.22]{P2}).
Its behavior is given in the following proposition.
The statement and the proof are similar to the symmetric space case \cite{A},
which was generalized to the hypergeometric setting in \cite{S1}.
\begin{proposition}
We have
\begin{equation}
\label{F0global}
F_0(x)\,\approx\;q^{-\langle \rho,\lambda\rangle}\,
\prod\nolimits_{\ssf\alpha\in R^+}\bigl(\ssf1\!+\!\langle\alpha,\lambda\rangle\bigr)\ssf.
\end{equation}
Moreover,
\begin{equation}
\label{F0asymp}
F_0(x)\ssf\sim\,\const\,
\boldpi(\lambda)\;q^{-\langle \rho,\lambda\rangle},
\end{equation}
as \,$\langle\alpha,\lambda\rangle\ssb\to\ssb+\infty$
\,for all \,$\alpha\!\in\!R^+$.
\end{proposition}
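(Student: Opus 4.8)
The plan is to read everything off Macdonald's formula \eqref{MacdonaldPolynomial}. Since $q_{\ssf t_\lambda}^{-1/2}=q^{-\langle\rho,\lambda\rangle}$, it gives
\begin{equation*}
F_0(x)=\frac{q^{-\langle\rho,\lambda\rangle}}{W_0(q^{-1})}\,\Psi(\lambda)
\qquad\text{with}\qquad
\Psi(\lambda):=\lim_{z\to0}\,\sum_{w\in W_0}\mathbf{c}(w.z)\,e^{\langle\lambda,\,w.z\rangle}\,,
\end{equation*}
the limit existing because $P_\lambda$ is (the restriction to the spectrum of) a polynomial, hence regular at $z\!=\!0$. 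Thus the whole proof reduces to estimating $\Psi$ on $P^+$.

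First I would turn $\Psi$ into a polynomial. Inserting $\mathbf{c}(z)=e^{\langle\rho,z\rangle}\ssf\Delta(z)^{-1}\ssf\mathbf{b}(z)$ and $\Delta(w.z)=(\det w)\ssf\Delta(z)$, one gets $\sum_w\mathbf{c}(w.z)\ssf e^{\langle\lambda,w.z\rangle}=\Delta(z)^{-1}M_\lambda(z)$, where $M_\lambda(z)=\sum_{w\in W_0}(\det w)\ssf\mathbf{b}(w.z)\ssf e^{\langle\lambda+\rho,\,w.z\rangle}$ is skew-invariant in $z$, hence vanishes on each wall $\langle\alpha,z\rangle\!=\!0$ and is divisible by $\boldpi$ to the same order $|R^+|$ as $\Delta$. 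As $\Delta(z)=\boldpi(z)+O(|z|^{|R^+|+2})$, the value $\Psi(\lambda)$ is the homogeneous part of degree $|R^+|$ in $z$ of $M_\lambda$, divided by $\boldpi(z)$. Expanding $\mathbf{b}(w.z)$ and $e^{\langle\lambda+\rho,w.z\rangle}$ into homogeneous components, that part equals $\sum_{j+k=|R^+|}\tfrac1{k!}\sum_w(\det w)\,b_j(w.z)\,\langle\lambda+\rho,w.z\rangle^k$ (with $b_j$ the degree-$j$ part of $\mathbf{b}$), and each summand, being skew of degree $|R^+|$ in $z$, equals $c_{j,k}(\lambda)\ssf\boldpi(z)$ for a polynomial $c_{j,k}$ in $\lambda$ of degree $\le k$. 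Only $j\!=\!0$ reaches degree $|R^+|$ in $\lambda$, where the Weyl-dimension-formula identity $\tfrac1{|R^+|!}\sum_w(\det w)\langle\nu,w.z\rangle^{|R^+|}=\boldpi(\rho)^{-1}\boldpi(\nu)\,\boldpi(z)$ gives the contribution $\mathbf{b}(0)\,\boldpi(\rho)^{-1}\boldpi(\lambda+\rho)\,\boldpi(z)$. Hence
\begin{equation*}
\Psi(\lambda)=\frac{\mathbf{b}(0)}{\boldpi(\rho)}\,\boldpi(\lambda+\rho)+R(\lambda)
\end{equation*}
is a polynomial in $\lambda$ with leading homogeneous part $\mathbf{b}(0)\,\boldpi(\rho)^{-1}\boldpi(\lambda)$ — where $\mathbf{b}(0)=(1-q^{-1})^{|R^+|}\!>\!0$ — and $\deg R<|R^+|$. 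Substituted back, this proves \eqref{F0asymp}, with $\const=(1-q^{-1})^{|R^+|}\bigl(W_0(q^{-1})\ssf\boldpi(\rho)\bigr)^{-1}$, as $\langle\alpha,\lambda\rangle\!\to\!+\infty$ for all $\alpha\!\in\!R^+$.

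For the global bound \eqref{F0global}, note that on $P^+$ one has $1\!\le\!\langle\alpha,\rho\rangle\!\le\!r$ for every $\alpha\!\in\!R^+$, so $\boldpi(\lambda+\rho)=\prod_{\alpha\in R^+}(\langle\alpha,\lambda\rangle+\langle\alpha,\rho\rangle)\approx\prod_{\alpha\in R^+}(1+\langle\alpha,\lambda\rangle)$ there; and $\Psi$ is strictly positive on $P^+$ because $F_0\!>\!0$ on $\X$ (as recalled above). It therefore suffices to prove $\Psi(\lambda)\approx\boldpi(\lambda+\rho)$ on $P^+$. This is the delicate point: a polynomial of degree $|R^+|$ is in general \emph{not} comparable to $\boldpi(\lambda+\rho)$ near the walls of $P^+$, where $\boldpi(\lambda+\rho)$ drops degree while a generic polynomial does not, so the finer structure of $\Psi$ must be used. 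Following the symmetric-space case \cite{A} and its hypergeometric counterpart \cite{S1}, I would argue by induction on $|R^+|$: near each face of $P^+$ the behaviour of $F_0$ is governed by the parabolic sub-root-system $R_\lambda=\{\alpha\!\in\!R\,|\,\langle\alpha,\lambda\rangle\!=\!0\}$, in the sense that $F_0(x)\approx q^{-\langle\rho,\lambda\rangle}\prod_{\alpha\in R^+\setminus R_\lambda}(1+\langle\alpha,\lambda\rangle)$ there (obtained by restricting $\Psi$ to the face and invoking the estimate for the lower-rank system), and, since $P^+$ is covered by finitely many such pieces and $\Psi$ is one fixed continuous everywhere-positive function, these estimates patch across the transition regions to yield \eqref{F0global}.

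The first two steps are essentially formal; the main obstacle is the last one — showing that $F_0$ is governed along the faces of $P^+$ by the corresponding lower-rank subsystem, with uniform control of the implied constants through the induction. This rank-reduction mechanism is precisely what underlies the proofs in \cite{A} and \cite{S1}.
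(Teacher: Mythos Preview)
Your extraction of the polynomial $\Psi(\lambda)$ from Macdonald's formula and the identification of its leading term are correct and amount to the same computation the paper carries out (there phrased as applying $\boldpi(\partial_z)\big|_{z=0}$ to $\boldpi(z)P_\lambda(z)$, which is your Taylor-coefficient extraction in different clothing). So \eqref{F0asymp}, and with it \eqref{F0global} far from the walls, are obtained the same way in both arguments.

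Where you diverge is the extension of \eqref{F0global} to the walls. You outline a rank-reduction induction along the faces of $P^+$ and correctly flag it as the main obstacle --- and indeed you do not carry it out. The paper sidesteps this entirely via the eigenfunction identity \eqref{F0eigenfunction}, which you never invoke: it gives a Harnack inequality $F_0(x)\approx F_0(y)$ for neighbours $x,y\in\X$, hence for $x,y$ at any fixed distance. Since every $\lambda\in P^+$ lies within bounded distance of a translate $\lambda+N\rho$ that is far from all walls, and since both $F_0$ and the right-hand side of \eqref{F0global} change only by bounded multiplicative factors under such a shift, the interior estimate transfers to the boundary at once.

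So your plan is not wrong in principle, but the substantive step is left undone, and it is unnecessary: the one-line Harnack argument via \eqref{F0eigenfunction} replaces the whole parabolic induction. (Incidentally, the paper asserts that \cite{A} and \cite{S1} proceed the same way, so the rank-reduction mechanism you attribute to them may not be what they actually use.)
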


\begin{proof}
Let us multiply \eqref{MacdonaldPolynomial} by $\boldpi(z)$\ssf,
in order to remove the singularity of the $\mathbf{c}$-function at the origin,
and apply the operator
$\boldpi\bigl(\frac\partial{\partial z}\bigr)\big|_{z=0}$\ssf.
We obtain in this way
\begin{equation*}
P_\lambda(0)=q^{-\langle \rho,\lambda\rangle}\,p(\lambda)\,,
\end{equation*}
where \ssf$p$ \ssf is a polynomial of the form
\begin{equation*}
p\,=\,\const\ssf\boldpi\,
+\,\text{linear combination of subproducts of \,}\boldpi\,.
\end{equation*}
This proves \eqref{F0asymp} and \eqref{F0global}
far away from the walls.
Eventually \eqref{F0eigenfunction} enables us
to extend \eqref{F0global} up to the walls,
since it implies that $F_0(x)\!\approx\!F_0(y)$
for all neighbors $x,y\!\in\!\X$
and more generally for all $x,y\!\in\!\X$
at any fixed distance.
\end{proof}

\subsection{Averaging operators and Fourier-Gelfand transform}
\label{Hecke}

We denote by $\A$ \ssf
the linear span of mean operators on \ssf$\X$\,:
\begin{equation*}
A_\lambda f(x)\ssf
=\,\tfrac{1\vphantom{|}}{N_\lambda\vphantom{\frac00}}\,
\sum\nolimits_{\ssf y\in V_\lambda\ssb(x)}f(y)
\qquad\forall\;x\!\in\!\X\ssf,\;\forall\;\lambda\!\in\!P^+.
\end{equation*}
Then $\A$ is a (commutative) polynomial $\ast$\ssf-algebra
with generators $A_{\lambda_1},\dots,A_{\lambda_r}$.
Consider $\A$ as a subalgebra of the algebra
$\mathcal L(\ell^{\ssf2}(\X))$
of bounded linear operators on $\ell^{\ssf2}(\X)$.
Then the closure \ssf$\overline{\A}$ \ssf
of \ssf$\A$ \ssf in \ssf$\mathcal{L}(\ell^{\ssf2}(\X))$ \ssf
is a commutative \ssf$C^\ast$\ssb-\ssf algebra
and the Fourier-Gelfand transform defines an isomorphism
between \ssf$\overline{\A}$ \ssf
and the algebra of continuous functions 
on \ssf$i\ssf\apartment$\ssf,
which are invariant under \ssf$W_0\!\ltimes\!i\ssf2\ssf\pi\ssf Q\ssf$.
Such functions can be viewed
as \ssf$W_0$\ssf-\ssf invariant continuous functions on \ssf$i\ssf U$\ssb,
where
\begin{equation*}
U\ssf=\,\{\,\theta\!\in\!\apartment\mid
|\langle\alpha,\ssb\theta\ssf\rangle|
\!\le\!2\ssf\pi\hspace{2mm}\forall\;\alpha\!\in \!R\,\}\ssf,
\end{equation*}
is a $W_0$\ssf-\ssf invariant fundamental domain
for the action of the lattice \ssf$2\ssf\pi\ssf Q$ on $\apartment$\ssf. 
Specifically, the image of \ssf$A_\lambda$ is
the Macdonald polynomial \ssf$P_\lambda\ssf$.
\smallskip

Finally the following inversion formula holds,
for every \ssf$A\!\in\!\overline{\A}$ and \ssf$x,\ssb y\!\in\!\X$
(see \cite[Theorem 5.1.5]{M1} or \cite[Theorem 5.2]{P2})\,:
\begin{equation}\label{InversionFormula}
(A\ssf\delta_x)(y)=\Czero\,{\displaystyle\int_{\ssf U}}\,
\widehat{A}\ssf(i\ssf\theta)\,P_\lambda(-\ssf i\ssf\theta)\;
\tfrac{d\hspace{.1mm}\theta}{|\ssf\mathbf{c}\ssf(i\ssf\theta)|^2}\,,
\end{equation}
\vspace{-2mm}

where \,$\Czero\ssb
=\ssb\frac{W_0(q^{-1})\vphantom{\frac00}}{(2\pi)^r\ssf|W_0|\vphantom{\frac00}}$
\ssf and \ssf$\lambda\ssb=\ssb (x,y)^+$\ssf.

\subsection{Distinguished random walk}

In this paper,
we consider mainly the Markov chain on $\X$
with transition probability
\begin{equation*}
p(x,y)=\,\begin{cases}
\,\sigma\,q_{\ssf t_{\lambda_j}}^{-1/2}
&\text{if \,}y\!\in\!V_{\lambda_j}\ssb(x),\\
\quad0
&\text{otherwise\ssf,}\\
\end{cases}
\end{equation*}
where
\begin{equation*}
\sigma^{-1}\,
=\,\sum\nolimits_{\ssf j=1}^{\,r}N_{\lambda_j}\,q_{\ssf t_{\lambda_j}}^{-1/2}.
\end{equation*}
This distinguished random walk,
which was already considered in \cite{SW} and \cite{CW},
is the {\it simple\/} random walk in rank one and two, but not in higher rank.
Its generator
\begin{equation}\label{DistinguishedRandomWalk}
Af(x)=\sum_{\ssf y\in\X}p(x,y)\,f(y)
=\sigma\,\sum_{\ssf j=1}^{\,r}
q_{\ssf t_{\lambda_j}}^{-1/2}\ssf N_{\lambda_j}\ssf A_{\lambda_j}f(x)\ssf,
\end{equation}
corresponds to \,$\widehat{A}\ssb=\ssb\sigma\ssf h$ \ssf via the Fourier-Gelfand transform
and its spectral radius is equal to \,$\boldsigma\ssb=\ssb\sigma\ssf h(0)$\ssf,
where \ssf$h(0)\ssb=\ssb\sum\nolimits_{\ssf j=1}^{\,r}\ssb|\vsf W_0\vsf.\ssf\lambda_j|\ssb
=\ssb2\ssf(2^{\vsf r}\hspace{-1mm}-\!1)$\vsf.

\section{Global heat kernel estimates in rank two}
\label{sectionA2}

For every integer \ssf$n\msb\ge\msb2$ \ssf and for every \ssf$x\!\in\!\X$
with \ssf$|x|\!<\!n$\ssf, set
\begin{equation*}
\delta
=\tfrac{x^+\ssb+\ssf\rho\vphantom{|}}{n\ssf+\ssf2}
=\underbrace{\tfrac{x_1+\ssf1\vphantom{|}}{n\ssf+\ssf2}}_{\delta_1}\lambda_1\ssb
+\underbrace{\tfrac{x_2+\ssf1\vphantom{|}}{n\ssf+\ssf2}}_{\delta_2}\lambda_2\ssf.
\end{equation*}
Notice that the coordinates \ssf$\delta_j$ belong to the interval \ssf$(0,1)$\ssf.
Thus \ssf$\delta\!\in\!\sector$
and \ssf$|\delta|\!=\!\smash{\frac{|x|\ssf+\ssf2}{n\ssf+\ssf2}}\!<\!1$\ssf.
This section is mainly devoted to the proof
of the following global estimate for the transition probabilities
\ssf$p_{\ssf n}(x)\!=\ssb p_{\ssf n}(0,x)$
\ssf of the random walk \eqref{DistinguishedRandomWalk}.

\begin{theorem}
\label{TheoremHeatEstimateA2}
Let \,$\phi(\delta)\ssb
=\min_{\ssf z\ssf\in\apartment\ssb}\Phi_\delta(z)$\ssf,
where \,$\Phi_\delta(z)\ssb
=\log\frac{h(z)}6\ssb-\ssb\langle\delta,z\rangle$\ssf.
Then
\begin{equation}\label{OptimalBoundA2}
p_{\ssf n}(x)\approx
\tfrac{(1\ssf+\ssf|x|\hspace{.1mm})\ssf
(1\ssf+\ssf x_1)\ssf(1\ssf+\ssf x_2)\vphantom{\big|}}
{n^{3\vphantom{\frac oo}}\ssf\sqrt{\ssf n\ssf-\ssf|x|\,}\ssf
\sqrt{\ssf n\ssf-\ssf x_1\ssb\vee x_2\ssf\vphantom{|}}}\;
\boldsigma^{\vsf n}\,q^{-\langle\rho,\ssf x^{\ssb+\ssb}\rangle}\ssf
e^{\ssf n\ssf\phi(\delta)}
\end{equation}
uniformly in the range \,$|x|\!<\!n$\ssf.
In the limit case \,$|x|\!=\!n$\ssf,
\begin{equation*}
p_{\ssf n}(x)\approx\ssf\sigma^{\ssf n}\,q^{-n}\,n^{\ssf n}\,
(\vsf x_1\hspace{-.75mm}\vee\hspace{-.5mm}x_2)^{-\ssf(x_1\ssb\vee x_2)}\,
(\vsf x_1\!\wedge\ssb x_2\!+\!1)^{-\ssf(x_1\wedge\ssf x_2)\ssf-\vsf\frac12}\ssf.
\end{equation*}
\end{theorem}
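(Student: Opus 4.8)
The plan is to analyze the transition density $p_n(x)$ through the inversion formula \eqref{InversionFormula}. Since $\widehat A = \sigma\, h$, iterating the generator gives $\widehat{A^n} = \sigma^n h^n$, so
\begin{equation*}
p_n(x) = \Czero\,\sigma^n \int_U h(i\theta)^n\, P_\lambda(-i\theta)\,\frac{d\theta}{|\mathbf c(i\theta)|^2}\,,
\qquad \lambda = x^+.
\end{equation*}
The first step is to move the contour of integration from $iU$ to $iU + z_0$ for a well-chosen real point $z_0 \in \apartment$. Writing $h(i\theta + z) = h(z)\,(\text{oscillatory factor})$ and using the expansion \eqref{MacdonaldPolynomial} of $P_\lambda$ together with the factorization $\mathbf c(z) = e^{\langle\rho,z\rangle}\Delta(z)^{-1}\mathbf b(z)$, the integrand acquires the exponential weight $e^{n\log(h(z_0)/6)}\,e^{-\langle\lambda, z_0\rangle}$ — that is, $e^{n\Phi_\delta(z_0)}$ after normalizing by $6^n = (h(0))^n$ and recalling $\boldsigma = \sigma h(0)$, $\lambda = (n+2)\delta - \rho$. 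One then chooses $z_0 = z_0(\delta)$ to be the minimizer defining $\phi(\delta)$, so that $e^{n\Phi_\delta(z_0)} = e^{n\phi(\delta)}$; this is a standard saddle-point/steepest-descent setup, and the real part of the exponent is then maximized on the shifted torus precisely at $\theta = 0$.

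The second step is the local analysis near the critical point. After the contour shift and the change of variables absorbing $q^{-\langle\rho,\lambda\rangle}$ (which comes from the $e^{\langle\rho,\,\cdot\,\rangle}$ in the $\mathbf c$-function and the $q_{t_\lambda}^{-1/2} = q^{-\langle\rho,\lambda\rangle}$ in \eqref{MacdonaldPolynomial}), the remaining oscillatory integral is governed by the Hessian of $\Phi_\delta$ at $z_0$. In rank $2$ the number of positive roots is $3$, so the skew factor $\boldpi$ contributes a cube, and the polynomial prefactor $p(\lambda)$ from the Proposition contributes $(1+|x|)(1+x_1)(1+x_2)$ (up to constants) via $F_0$-type estimates — this accounts for the numerator in \eqref{OptimalBoundA2}. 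The Gaussian integration in the two-dimensional apartment, combined with the degenerations of the Hessian as $\delta$ approaches the boundary of its allowed region, produces the denominator $n^3\sqrt{n-|x|}\,\sqrt{n-x_1\vee x_2}$: the factor $n^3$ is the generic decay ($n^{r/2}$ from the Gaussian times $n^{|R^+|}$-type Jacobian factors, here $r=2$, $|R^+|=3$ giving the right count), while the two square-root factors reflect that $z_0$ escapes to infinity in one or two directions as $\delta$ touches the walls $|\delta| = 1$ (i.e. $|x| = n$) or $\delta_1 \vee \delta_2 = 1$ (i.e. $x_1 \vee x_2 = n$), where the corresponding one-dimensional curvature degenerates. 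Uniformity over the whole range $|x| < n$ is obtained by tracking the dependence of the Hessian eigenvalues on $\delta$ and splitting into the bulk region and the boundary layers.

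The third step is the limit case $|x| = n$, where $\delta$ lies on the wall $|\delta| = 1$ and the saddle point has escaped to infinity, so the contour-shift method degenerates. Here I would argue combinatorially instead: when $|x| = n$ the walk must take $n$ steps that never backtrack and whose radial increments sum exactly to $x^+$; the dominant contribution comes from paths making $x_1 \vee x_2$ steps in one fundamental direction and $x_1 \wedge x_2$ in the other, and counting such geodesic configurations (a multinomial-type count, with cardinalities $N_{\lambda_j}$ and weights $q_{t_{\lambda_j}}^{-1/2} = q^{-1}$) together with Stirling's formula yields $\sigma^n q^{-n} n^n (x_1\vee x_2)^{-(x_1\vee x_2)}(x_1\wedge x_2 + 1)^{-(x_1\wedge x_2)-1/2}$, the extra $-1/2$ coming from the Stirling correction on the smaller factorial while the larger one contributes a full power (its square-root correction being absorbed into constants relative to the $n^n$ scale). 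The main obstacle I anticipate is \emph{uniformity} in the second step: getting matching upper and lower bounds with the same explicit prefactor requires careful control of the steepest-descent remainder as $\delta$ ranges over a region whose closure meets several walls, and in particular handling the transition between the interior Gaussian regime and the two distinct boundary-layer regimes ($|x| \to n$ versus $x_1 \vee x_2 \to n$) without losing the precise powers of $n - |x|$ and $n - x_1 \vee x_2$; the lower bound is typically the delicate half, since one must show the main term is not cancelled by the oscillatory tail.
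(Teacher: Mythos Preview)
Your high-level strategy is right --- inversion formula, contour shift to the real saddle point $s$, local Gaussian analysis, and combinatorics for $|x|=n$ --- and this is exactly the architecture of the paper's proof. But two concrete technical devices are missing, and without them the argument would not close.

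First, the polynomial factor $(1+|x|)(1+x_1)(1+x_2)$ and the $n^{-3}$ do \emph{not} come from ``$F_0$-type estimates'' or vague Jacobian factors. After the contour shift one faces $\int_U h(s+i\theta)^n\,\Delta(s+i\theta)\,e^{-i\langle x^++\rho,\theta\rangle}\,\mathbf b(s+i\theta)^{-1}\,d\theta$, and the paper's key move is an integration by parts based on the remarkable identity
\[
\boldpi(\partial)\,h^{n+3}=(n+3)^2(n+2)\,\Bigl[h+2\,\tfrac{n+1}{n+3}\Bigr]\,h^{\,n}\,\Delta\,,
\]
which lets one trade $h^n\Delta$ for $\boldpi(\partial)h^{n+3}$ (up to the bracketed factor) and move $\boldpi(\partial)$ onto $e^{-i\langle x^++\rho,\theta\rangle}/\mathbf b$, producing $\boldpi(x^++\rho)$ and the $n^{-3}$ simultaneously. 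Without this formula (which in turn rests on the product formula $h+2=\prod_\lambda(e^\lambda+1)$), I do not see how you extract the correct prefactor uniformly, especially for the \emph{lower} bound.

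Second, you do not anticipate the obstruction caused by the \emph{extra wall} $\{\alpha_1=\alpha_2\}=\R\rho$. The bracket $h(s+i\theta)+2\,\tfrac{n+1}{n+3}$ appearing after integration by parts can vanish when $x^+$ approaches this wall, making the amplitude singular. The paper gets around this by replacing $p_n$ with a modified kernel $\widetilde p_n=p_n+2\,\tfrac{n}{n+2}\,\sigma\,p_{n-1}$ whose amplitude has no such denominator, proving the estimate for $\widetilde p_n$, and then transferring it back via local Harnack inequalities $p_n\le\widetilde p_n\lesssim p_{n+2}$. In the most delicate regime ($|x|/n\to1$ with $n-|x|$ large) a further case split is needed: away from the extra wall one can work with the singular amplitude directly; near it one uses Harnack again to move $x^+$ off the wall by a bounded amount. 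None of this is visible from ``tracking Hessian eigenvalues'' alone. Your anticipated obstacle --- uniformity of the lower bound across boundary layers --- is real, and these two devices are precisely what resolve it.
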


\begin{remark}
Let us comment on some factors occurring on the right hand side of \eqref{OptimalBoundA2}.
The exponential decay \,$\boldsigma^{\vsf n}$
is produced by the spectral radius \,$\boldsigma
=\ssb\smash{\frac{3\vphantom{|}}{q\ssf+1+\ssf q^{-1}}}\ssb
<\ssb1$ of \ssf$A$\ssf, \ssf$F_0(x)\ssb
\approx\ssb(1\!+\!|x|\hspace{.1mm})\ssf(1\!+\ssb x_1)\ssf(1\!+\ssb x_2)\,
q^{-\langle\rho,\ssf x^{\ssb+\ssb}\rangle}$
is the \ssf$W_0$-\ssf invariant ground state of \ssf$A$\ssf,
and \,$e^{\ssf n\ssf\phi(\delta)}\vphantom{\big|}$ is a Gaussian type factor.
\end{remark}

\begin{remark}
Recall \ssf$($\cite{L1}, see also \cite{W}$)$
the corresponding result in rank one i.e.~for homogeneous trees\,$:$
\begin{equation}\label{HeatEstimateA1}
p_{\ssf n}(x)\approx
\tfrac{1\ssf+\ssf|x|\vphantom{|}}{n\,\sqrt{1\ssf+\ssf n\ssf-\ssf|x|\ssf}}\;
\boldsigma^{\vsf n}\,
q^{-\frac{|x|}2}\ssf e^{\ssf n\ssf\phi(\delta)}
\quad\forall\;|x|\ssb\le\ssb n\text{ with same parities\ssf.}
\end{equation}
Here \,$\boldsigma\ssb
=\ssb\frac{2\vphantom{|}}{q^{1/2}\ssf+\,q^{-1/2}}$,
\ssf$\delta\ssb=\ssb\frac{|x|+\ssf1}{n\ssf+\ssf1}$
and
\begin{equation*}
\phi(\delta)\ssf=\ssf-\,\tfrac12\,
\{\ssf(1\hspace{-.75mm}+\!\delta)\log\ssf(1\hspace{-.75mm}+\!\delta)\ssb
+\ssb(1\!-\!\delta)\log\ssf(1\!-\!\delta)\ssf\}\ssf.
\end{equation*}
As \,$\lim_{\ssf\delta\nearrow1}\ssb\phi(\delta)\ssb=\ssb-\log 2$\ssf,
notice that \eqref{HeatEstimateA1} agrees with the obvious expression
\,$p_{\ssf n}(x)\!=\!(q\!+\hspace{-.75mm}1)^{-n}$
in the limit case \ssf$|x|\!=\!n$\ssf.
\end{remark}

Theorem \ref{TheoremHeatEstimateA2} is proved by combining different arguments,
depending on the relative sizes of \ssf$|x|$ \ssf and \ssf$n$
\ssf and on the position of \ssf$\lambda\ssb=\ssb x^+$
in \ssf$\smash{\cl{\sector}}$.
In most cases, the proof relies on suitable versions
of the inversion formula \eqref{InversionFormula},
which yields
\begin{equation}\label{HeatFormula1}
p_{\ssf n}(x)
=\Czero\,\sigma^{\ssf n}\ssb\int_{\ssf U}
h(i\ssf\theta)^n\,P_\lambda(-\ssf i\ssf\theta)\,
\tfrac{d\hspace{.1mm}\theta}{|\ssf\mathbf{c}\ssf(i\ssf\theta)|^2}\,,
\end{equation}
and thus boils down to estimating integrals of the form
\begin{equation}\label{Integrals}
\int_{\ssf U}\ssf
e^{\ssf(n+2)\ssf\Psi(\theta)}\,
a(\theta)\,d\ssf\theta\,,
\end{equation}
where \ssf$\Psi$ \ssf is a complex phase involving the function \ssf$h$
\ssf and \ssf$a$ \ssf is an amplitude involving
the functions \ssf$\mathbf{b}$ \ssf or \ssf$\mathbf{c}$\ssf.
Let us start the proof of Theorem \ref{TheoremHeatEstimateA2}
with a series of auxiliary results.

\subsection{Local Harnack inequalities}

\begin{lemma}
There exists a constant \,$C\!>\!0$ such that
\begin{equation*}
p_{\ssf n+1}(x)\ge C\,p_{\ssf n}(y)
\end{equation*}
for every \,$n\!\in\!\N^*$ \ssb and for all neighbors \,$x,y\!\in\!\X$.
\end{lemma}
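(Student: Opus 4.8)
The plan is to obtain the inequality from a one-step (Chapman--Kolmogorov) decomposition of \ssf$p_{\ssf n+1}$\ssf, exploiting that neighboring vertices communicate in a single step of the walk.

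First I would record, from \eqref{DistinguishedRandomWalk}, that \ssf$p(u,v)\!>\!0$ \ssf precisely when \ssf$v\!\in\!V_{\lambda_j}\ssb(u)$ \ssf for some \ssf$j\!\in\!\{1,\dots,r\}$\ssf, i.e.\ \ssf$(u,v)^+\!=\!\lambda_j$\ssf, and that in this case \ssf$p(u,v)=\sigma\,q_{\ssf t_{\lambda_j}}^{-1/2}$\ssf. Since \ssf$|\lambda_j|=\sum_{i=1}^{r}\langle\alpha_i,\lambda_j\rangle=1$ \ssf and the \ssf$\lambda_j$ \ssf are the only dominant weights of length one, this is exactly the neighbor relation \ssf$d(u,v)\!=\!1$ \ssf on \ssf$\X$\ssf. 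Moreover, writing \ssf$w_0$ \ssf for the longest element of \ssf$W_0$\ssf, if \ssf$(u,v)^+\!=\!\lambda_j$ \ssf then \ssf$(v,u)^+\!=\!-\ssf w_0\ssf\lambda_j\!=\!\lambda_{r+1-j}$ \ssf (the longest Weyl element acting as minus the diagram automorphism on \ssf$P^+$), with \ssf$r\!+\!1\!-\!j\!\in\!\{1,\dots,r\}$\ssf, so that \ssf$p(v,u)=\sigma\,q_{\ssf t_{\lambda_{r+1-j}}}^{-1/2}\!>\!0$ \ssf as well. Hence the support of \ssf$p(\cdot\,,\cdot)$ \ssf is symmetric, and for all neighbors \ssf$x,y\!\in\!\X$ \ssf one has \ssf$p(y,x)\ge C$\ssf, where \ssf$C:=\sigma\,\min_{1\le j\le r}q_{\ssf t_{\lambda_j}}^{-1/2}>0$\ssf.

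Conditioning the walk at time \ssf$n$ \ssf and discarding all terms but \ssf$z\!=\!y$\ssf, I then get
\[
p_{\ssf n+1}(x)\,=\,p_{\ssf n+1}(0,x)\,=\,\sum_{z\in\X}p_{\ssf n}(0,z)\,p(z,x)\;\ge\;p_{\ssf n}(0,y)\,p(y,x)\;\ge\;C\,p_{\ssf n}(y)\,,
\]
with \ssf$C\!>\!0$ \ssf independent of \ssf$n$\ssf, \ssf$x$ \ssf and \ssf$y$\ssf, which is the assertion.

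I do not expect any genuine obstacle here: the one point that actually needs to be checked is that a neighbor \ssf$y$ \ssf of \ssf$x$ \ssf is reachable from \ssf$x$ \ssf in a single step, which is exactly the symmetry of the support of \ssf$p(\cdot\,,\cdot)$ \ssf noted above. (Should ``neighbor'' be meant in the wider sense of ``lying at some fixed distance \ssf$k$'', the same argument applies with \ssf$n\!+\!1$ \ssf replaced by \ssf$n\!+\!k$ \ssf and \ssf$C$ \ssf a uniform lower bound for the corresponding \ssf$k$-step transition probabilities.)
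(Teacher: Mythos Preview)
Your argument is correct and is essentially the same one-step (Chapman--Kolmogorov) argument as the paper's proof: the paper simply writes $p_{\ssf n+1}(x)=A\,p_n(x)=\tfrac{1}{2(q^2+q+1)}\sum_{d(y,x)=1}p_{\ssf n}(y)$ in rank~$2$ and reads off the inequality, while you do the same thing via $p_{\ssf n+1}(0,x)=\sum_z p_{\ssf n}(0,z)\,p(z,x)\ge p_{\ssf n}(0,y)\,p(y,x)$. Your version is slightly more general (it handles arbitrary rank and makes explicit the symmetry $(y,x)^+=\lambda_{r+1-j}$ needed for $p(y,x)>0$), but the underlying idea is identical.
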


This inequality is an immediate consequence of the very definition
\begin{equation*}
p_{\ssf n+1}(x)=A\,p_n(x)=\tfrac1{2\ssf(q^{\ssf2}+\ssf q\ssf+1)}
\hspace{-2mm}\sum_{\substack{y\in\X\\d(y,x)=1}}\hspace{-2mm}
p_{\ssf n}(y)
\qquad\forall\;n\!\in\!\N^*\ssb,\;\forall\;x\!\in\!\X.
\end{equation*}
Next result follows by iteration and by using the fact that
the random walk between two neighbors in $\X$ is \textit{aperiodic\/}, meaning that
\begin{equation*}
\operatorname{gcd}\msf\{\ssf n\msb\in\msb\N^*\msf|\,p_{\ssf n}(x,y)\msb>\msb0\ssf\}=\vsb1
\qquad\forall\,x\ne y\ssf.
\end{equation*}

\begin{corollary}\label{Harnack1}
For every \,$m\!\in\!\N^*$\ssb, there exists \,$C\!>\!0$ such that
\begin{equation}\label{Harnack2}
p_{\ssf n+m}(x)\ge C\,p_{\ssf n}(y)
\end{equation}
for every \,$n\!\in\!\N^*$ \ssb and for all \,$x,y\!\in\!\X$
such that \,$0\ssb<\ssb d(x,y)\ssb\le\ssb m$\ssf.
\end{corollary}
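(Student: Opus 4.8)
The plan is to iterate the preceding lemma along a path of neighbors joining $y$ to $x$ of length exactly $m$; constructing such a path is the only delicate point, because of a parity issue. A path of neighbors from $y$ to $x$ of length $d(x,y)$ always exists, but we need one of length exactly $m\ge d(x,y)$, and correcting the length by a single unit requires an odd excursion. This is precisely where the aperiodicity recorded above --- equivalently, the presence of triangles in the building --- is used, and it is the reason the statement is restricted to rank $r\ge2$.

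First I would recall that the $1$-skeleton of $\X$, in which two vertices are declared adjacent when they are at scalar distance $1$, is connected and that its graph metric coincides with $d$: realising $x$ and $y$ in a common apartment, one passes from one to the other by adding fundamental weights one at a time, and each elementary step $v\mapsto v+\lambda_j$ is a neighbor step since $d(v,v+\lambda_j)=|\lambda_j|=1$. Thus there is a path $y=z_0,z_1,\dots,z_k=x$ of neighbors with $k=d(x,y)\le m$.

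Next I would pad this path to length exactly $m$ by two elementary moves. Since the building is thick, $x$ admits a neighbor $w$, and inserting the back-and-forth excursion $x,w,x$ at the endpoint lengthens the path by $2$. To correct the parity when $m-k$ is odd, I would use that a chamber is a simplex on $r+1\ge3$ pairwise adjacent vertices: replacing the last edge $\{z_{k-1},x\}$ by the detour $z_{k-1},a,x$, where $a$ is a third vertex of a chamber containing $\{z_{k-1},x\}$, produces a path of length $k+1$ from $y$ to $x$. Combining the two moves yields, for every integer $\ell\ge k$ --- in particular for $\ell=m$ --- a path $y=u_0,u_1,\dots,u_\ell=x$ of neighbors.

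Finally, writing $C_1>0$ for the constant of the preceding lemma, I would iterate that lemma $m$ times along the path of length $m$: $p_{n+m}(x)=p_{n+m}(u_m)\ge C_1\,p_{n+m-1}(u_{m-1})\ge\cdots\ge C_1^{\,m}\,p_n(u_0)=C_1^{\,m}\,p_n(y)$ for every $n\in\N^*$, the indices occurring remaining $\ge n\ge1$ so that the lemma applies at each stage. Then $C=C_1^{\,m}$ works. As indicated, the only genuine obstacle is the parity bookkeeping of the third paragraph; the rest is routine.
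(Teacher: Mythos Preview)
Your proof is correct and follows the same approach as the paper: iterate the one-step lemma along a neighbor path of length exactly $m$ from $y$ to $x$, using aperiodicity (equivalently, the presence of triangles in rank $r\ge2$) to adjust the parity of the path length. The paper's own proof is a one-sentence sketch of precisely this idea, so your detailed path-padding construction is a faithful and complete expansion.
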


\begin{remark}\label{Harnack3}
Inequality \eqref{Harnack2} remains true when \,$x\ssb=\ssb y$\ssf.
This is proved by the same arguments if \,$m\ssb>\!1$ but, if \,$m\ssb=\!1$\ssf,
this follows only a posteriori from Theorem~\ref{TheoremHeatEstimateA2}.
\end{remark}

\subsection{Remarkable formulae}

The function
\begin{equation*}
h=e^{\ssf\lambda_1}\!+e^{-\lambda_1}\!
+e^{\ssf\lambda_2}\!+e^{-\lambda_2}\!
+e^{\ssf\lambda_1-\lambda_2}\!+e^{\ssf\lambda_2-\lambda_1}
\end{equation*}
enjoys the product formula
\begin{equation}\begin{aligned}\label{ProductFormulaA2}
h\ssb+\ssb2\ssf
&=(\ssf e^{\ssf\lambda_1}\!+\ssb1\ssf)\ssf
(\ssf e^{-\lambda_2}\!+\ssb1\ssf)\ssf
(\ssf e^{\ssf\lambda_2-\lambda_1}\!+\ssb1\ssf)\\
&=(\ssf2\cosh\tfrac{\lambda_1}2)\ssf
(\ssf2\cosh\tfrac{\lambda_2}2)\ssf
(\ssf2\cosh\tfrac{\lambda_1-\lambda_2}2)
\end{aligned}\end{equation}
and the differentiation formula
\begin{equation}\label{DifferentiationFormulaA2}
\boldpi(\partial)\,h^{\ssf n+3}
=(n\!+\!3)^2\ssf(n\!+\!2)\,
\bigl[\,h\ssb+\ssb2\,\tfrac{n\ssf+\ssf1}{n\ssf+\ssf3}\ssf\bigr]\,h^{\ssf n}\ssf\Delta
\qquad\forall\;n\ssb\ge\ssb-\ssf2\,.
\end{equation}
These formulae, which are easily checked
(see Appendix \ref{Appendix} for more general results),
play key roles in our analysis of \eqref{HeatFormula1}.

\subsection{Real phase \ssf$\Phi\ssb=\ssb\Phi_\delta$\ssf.}
\label{RealPhase}

Let \ssf$\delta\!\in\!\cl{\sector}$
with \ssf$|\delta|\!=\ssb\delta_1\!+\ssb\delta_2\!<\!1$\ssf,
where \ssf$\delta_j\!=\!\langle\alpha_j,\delta\ssf\rangle\!\ge\!0$
\ssf are the coordinates of \ssf$\delta$
in the basis $\{\lambda_1,\lambda_2\}$\ssf.
In this subsection, we study the function
\vspace{-1mm}
\begin{equation*}
\Phi(z)=\ssf\Phi_\delta(z)=\ssf\log\tfrac{h(z)}6-\langle\ssf\delta,z\ssf\rangle
\qquad\forall\,z\!\in\!\apartment\ssf,
\end{equation*}
which was introduced in the statement of Theorem \ref{TheoremHeatEstimateA2}
and whose dependency on \ssf$\delta$ \ssf will no more be indicated.

\begin{lemma}\label{PropertiesShiftA2}
\begin{description}[labelindent=0pt,labelwidth=5mm,labelsep*=1pt,leftmargin=!]
\item[\rm(a)]
The function \,$\Phi$ is strictly convex,
tends to \ssf$+\infty$ at infinity
and reaches its minimum \ssf$\phi(\delta)\!\in\!(-\ssb\log6\ssf,0\ssf]$
at a single point \ssf$s\!\in\!\cl{\sector}$,
which is the unique solution to the equation
\begin{equation}\label{equationshift}
\tfrac{dh(s)}{h(s)}=\ssf\delta\,.
\end{equation}
\vspace{-5mm}

Moreover,
\vspace{-.5mm}
\begin{equation}\label{dphi}
d\ssf\phi\ssf(\delta)=-\,s\,.
\end{equation}
\item[\rm(b)]
The map
\,$\delta\ssb\longmapsto\ssb s$
\ssf is real analytic and bijective between
\,$\{\ssf\delta\!\in\!\cl{\sector}\ssf|\,|\delta|\!<\!1\ssf\}$
and \,$\cl{\sector}$.
\item[\rm(c)]
We have
\ssf$h(s)\ssb\approx\ssb e^{\ssf s^1\vee s^2}$\ssb,
where \ssf$s^{\ssf j}\!=\ssb\langle\lambda_j,s\ssf\rangle$
denote the coordinates of \ssf$s$ in the basis \ssf$\{\alpha_1,\alpha_2\}$\ssf.
\item[\rm(d)]
The jth coordinates \ssf$\delta_j$
and \ssf$s_j\!=\!\langle\alpha_j,s\ssf\rangle$
in the basis \ssf$\{\lambda_1,\lambda_2\}$
vanish simultaneously.
\item[\rm(e)]
$\,|\delta|\ssb\to\ssb1$
if and only if \ssf$|s|\ssb\to\ssb+\infty$\ssf.
More precisely,
\ssf$1\!-\ssb|\delta|\approx
e^{-\ssf(s^1\ssb\wedge\ssf s^2)}$\ssf.
\item[\rm(f)]
We have \ssf$1\!-\ssb\delta_1\!\vee\ssb\delta_2\ssb
\approx e^{-\ssf|s^1\ssb-\ssf s^2|}$.
More precisely,
\begin{equation*}
e^{-\ssf|s^1\ssb-\ssf s^2|}
=\tfrac{1\ssf-\ssf\delta_1\ssb\vee\delta_2\vphantom{|}}
{\delta_1\ssb\vee\delta_2}
+\O\ssf(\ssf1\!-\ssb|\delta|\ssf)
\quad\text{as \,}|\delta|\ssb\to\ssb1\ssf.
\end{equation*}
\item[\rm(g)]
$\,\delta_1\!-\ssb\delta_2$ and
\,$s_1\!-\ssb s_2\!=\ssb3\ssf(s^1\!-\ssb s^2)$ have the same sign.
Moreover
\begin{equation*}
|\ssf\delta_1\!-\ssb\delta_2\ssf|\approx1\!-\ssb e^{-\ssf|s^1\ssb-\ssf s^2|}\approx1\ssb\wedge\ssb|\ssf s_1\!-\ssb s_2\ssf|\,.
\end{equation*}
\end{description}
\end{lemma}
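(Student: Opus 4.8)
\textbf{Proof plan for Lemma \ref{PropertiesShiftA2}.}

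The plan is to treat parts (a)--(g) in a logical cascade, starting from the convex-analytic foundation and then extracting the asymptotic information from the explicit product formula \eqref{ProductFormulaA2}. First I would establish (a): strict convexity of $\Phi$ follows because $\log h(z)$ is the logarithm of a sum of exponentials $\sum_{\mu}e^{\langle\mu,z\rangle}$ over the orbit $W_0.\lambda_1\cup W_0.\lambda_2$, hence a strictly convex function whenever these weights $\mu$ affinely span $\apartment$ (which they do in type $\widetilde A_2$); subtracting the linear term $\langle\delta,z\rangle$ preserves strict convexity. Coercivity ($\Phi\to+\infty$ at infinity) holds exactly because $|\delta|<1$: along any ray $z=t\,v$ with $v\in\apartment$, $\log h(tv)$ grows like $t\,\max_\mu\langle\mu,v\rangle$, and the extremal $\mu$ has $\langle\rho,\cdot\rangle$-weight making $\max_\mu\langle\mu,v\rangle>\langle\delta,v\rangle$ when $|\delta|<1$ — this is the standard "Gaussian region" condition. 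Strict convexity plus coercivity give a unique minimizer $s$, the critical point equation $d\log h(s)=\delta$ is \eqref{equationshift}, and $\phi(\delta)\in(-\log 6,0]$ follows by evaluating: $\Phi(0)=\log\frac{h(0)}{6}=\log 1=0$ gives the upper bound, while $\Phi(z)>\log\frac{h(z)}{6}-\langle\delta,z\rangle\ge -\log 6$ crudely (using $h\ge 6$ fails in general, so instead one argues $h(z)e^{-\langle\delta,z\rangle}>$ the single term $e^{\langle\mu,z\rangle-\langle\delta,z\rangle}$ which stays bounded below by a positive constant depending on $\delta$, giving the strict lower bound). Finally \eqref{dphi} is the envelope theorem: $\phi(\delta)=\Phi_\delta(s(\delta))$ and $\partial_\delta\Phi_\delta(z)=-z$, so $d\phi(\delta)=-s(\delta)$.

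Next, part (b): the map $\delta\mapsto s$ is, by \eqref{equationshift}, the inverse of the gradient map $z\mapsto d\log h(z)$. Real analyticity is automatic once I check the Jacobian is nonsingular — but that Jacobian is precisely the Hessian of the strictly convex function $\log h$, hence positive definite, so the inverse function theorem applies. Bijectivity onto $\cl{\sector}$ requires showing $d\log h$ maps $\cl{\sector}$ onto $\{\delta\in\cl{\sector}:|\delta|<1\}$; surjectivity follows from coercivity (for each such $\delta$ a minimizer exists) together with the $W_0$-equivariance of $h$, which forces the minimizer $s$ of $\Phi_\delta$ to lie in $\cl{\sector}$ when $\delta$ does. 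Injectivity is strict convexity again.

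For parts (c)--(g), the key tool is the product formula \eqref{ProductFormulaA2}: $h(z)+2=(2\cosh\frac{z^1}{??})\cdots$ — more precisely, writing $u=\langle\lambda_1,z\rangle=z^1$, $v=\langle\lambda_2,z\rangle=z^2$ in coordinates dual to the simple roots, one has (up to the precise bracketing in \eqref{ProductFormulaA2}) $h+2=4\cosh\frac{z^1}{?}\cdot 2\cosh(\cdots)$; I would introduce the substitution matching the factors $e^{\lambda_1}\!+\!1$, $e^{-\lambda_2}\!+\!1$, $e^{\lambda_2-\lambda_1}\!+\!1$ and take logarithmic derivatives to turn \eqref{equationshift} into three scalar equations. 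Part (c): since $h(s)\ge h(s)+2-2$ and $h(s)+2$ is a product of three $2\cosh$ factors each $\approx e^{|{\cdot}|/2}$, and on $\cl\sector$ the dominant contribution is $e^{s^1\vee s^2}$, one reads off $h(s)\approx e^{s^1\vee s^2}$ directly. Parts (d)--(g) all come from differentiating: $\delta=d\log h(s)$ expressed in the product form becomes $\delta_j = \frac12\tanh(\cdots)+\cdots$ type expressions, so $\delta_j=0 \iff$ the corresponding argument vanishes $\iff s_j=0$, giving (d); the asymptotics $1-|\delta|\approx e^{-(s^1\wedge s^2)}$ in (e) and $1-\delta_1\vee\delta_2\approx e^{-|s^1-s^2|}$ in (f) follow from $1-\tanh t\approx e^{-2t}$ applied to the relevant factor, with the refined $\mathcal O$-expansions coming from the next term in that expansion; (g) is the sign-tracking and the near-origin estimate $\tanh t\approx t\wedge 1$. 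The routine-but-careful part is bookkeeping the three coordinate systems ($\{\lambda_1,\lambda_2\}$ vs $\{\alpha_1,\alpha_2\}$, with the factor $3$ in $s_1-s_2=3(s^1-s^2)$ coming from the Cartan matrix), which I would fix once at the start.

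\textbf{Main obstacle.} The genuinely delicate point is establishing \emph{coercivity} of $\Phi_\delta$ and the \emph{surjectivity} of $\delta\mapsto s$ onto all of $\{\delta\in\cl\sector:|\delta|<1\}$ — i.e.\ matching the two natural domains exactly. Everything else reduces to convex analysis (automatic) or to differentiating the explicit product formula \eqref{ProductFormulaA2} (tedious but mechanical). The coercivity boundary behavior as $|\delta|\nearrow 1$ is exactly what forces $|s|\to\infty$ and underlies all the sharp asymptotics in (c)--(g), so I would prove (a) and (b) with enough quantitative control (rates of the form $\Phi_\delta(z)\gtrsim (1-|\delta|)\,\|z\|$ far out) that the asymptotic parts follow with little extra work.
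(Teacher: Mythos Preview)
Your plan is correct and largely parallels the paper's argument. For (a)--(b) the paper does exactly what you describe: it proves the quantitative coercivity bound $\Phi(z)>(1-|\delta|)\,(|z^1|\vee|z^2|)-\log 6$ (which also gives $\phi(\delta)>-\log 6$ immediately, so your detour there is unnecessary), computes the Hessian of $\log h$ explicitly as $\tfrac{1}{2h^2}\sum_{\lambda,\lambda'}e^{\lambda+\lambda'}(\lambda-\lambda')\otimes(\lambda-\lambda')$ to get strict convexity, and obtains \eqref{dphi} by the envelope/chain-rule argument you sketch.

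The one genuine difference is in (c)--(g). You propose to differentiate the \emph{product} formula \eqref{ProductFormulaA2} for $h+2$ and work with sigmoid/tanh-type expressions; the paper instead differentiates the \emph{sum} $h=\sum_\mu e^\mu$ directly, obtaining the clean system
\[
h(s)\,\delta_1=4\sinh\tfrac{s_1}{2}\cosh\tfrac{s^2}{2},\qquad
h(s)\,\delta_2=4\sinh\tfrac{s_2}{2}\cosh\tfrac{s^1}{2},
\]
from which (d) is immediate and (e)--(g) follow by adding, subtracting, and rewriting the two equations. Your route works too, but be aware that the stationary equation is $d\log h=\delta$, not $d\log(h+2)=\delta$, so using the product formula introduces a correction factor $\tfrac{h+2}{h}$ you will have to carry; the paper's sum-based system avoids this and also makes the $\{s_j\}$--$\{\delta_j\}$ correspondence in (d) transparent without converting between the $\alpha$- and $\lambda$-coordinates. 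Either way the computations are mechanical once the system is set up.
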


\begin{proof}
As \ssf$h(z)\ssb>\ssb e^{\ssf|z^1|\vee|z^2|}$ \ssf and
\ssf$|\langle\delta,z\rangle|\ssb
\le\ssb|\delta|\ssf(\ssf|z^1|\ssb\vee\ssb|z^2|\ssf)$\ssf,
where \ssf$z^{\ssf j}\!=\ssb\langle\lambda_j,z\ssf\rangle$
\ssf are the coordinates of \ssf$z$
\ssf in the basis $\{\alpha_1,\alpha_2\}$\ssf,
we have
\begin{equation*}
\Phi(z)>(\ssf1\ssb-\ssb|\delta|\ssf)\,(\ssf|z^1|\ssb\vee\ssb|z^2|\ssf)-\log6\,.
\end{equation*}
Hence \ssf$\Phi(z)\ssb\to\ssb+\infty$
\ssf as \ssf$z$ \ssf tends to infinity in \ssf$\apartment$\ssf.
Moreover, as \ssf$h$ \ssf is $W_0$\ssf-\ssf invariant and
\begin{equation*}
\langle\ssf\delta,z\ssf\rangle\ge\langle\ssf\delta,w.z\ssf\rangle
\qquad\forall\,z\!\in\!\cl{\sector},
\;\forall\,w\!\in\!W_0\ssf,
\end{equation*}
$\Phi$ \ssf reaches its minimum $\phi(\delta)$
in $\cl{\sector}$\ssb.
Notice that $\phi(\delta)\!>\!-\log6$
and $\phi(\delta)\!\le\!\Phi(0)\!=\ssb0$.
Let us next compute the first two derivatives of \ssf$\log h$\ssf.
As \ssf$h\ssb
=\ssb\sum_{\ssf\lambda\in\Lambda}\ssb e^{\ssf\lambda}$
\ssf is a sum of exponentials,
the gradient of \ssf$\log h$ \ssf is given by
\begin{equation}\label{gradient}
d\ssf(\log h)=\ssf\tfrac1h\,
\sum\nolimits_{\ssf\lambda\in\Lambda}\ssb
e^{\ssf\lambda}\ssf\lambda
\end{equation}
and its Hessian by
\begin{equation}\label{Hessian1}\begin{aligned}
d^{\ssf2}(\log h)
&=\ssf\tfrac1h\,
\sum\nolimits_{\ssf\lambda\in\Lambda}\ssb
e^{\ssf\lambda}\,\lambda\!\otimes\!\lambda
-\ssf\tfrac1{h^2}\,
\sum\nolimits_{\ssf\lambda,\lambda'\ssb\in\Lambda}\ssb
e^{\ssf\lambda+\lambda'}\lambda\!\otimes\!\lambda'\\
&=\ssf\tfrac1{2\ssf h^2}\,
\sum\nolimits_{\ssf\lambda,\lambda'\ssb\in\Lambda}\ssb
e^{\ssf\lambda+\lambda'}
(\lambda\!-\!\lambda')\!\otimes\!(\lambda\!-\!\lambda')\,.
\end{aligned}\end{equation}
Since the vectors \ssf$\lambda\!-\!\lambda'$ span \ssf$\apartment$\ssf,
we conclude that the Hessian
\,$d^{\ssf2\ssf}\Phi\ssb=\ssb d^{\ssf2}(\log h)$
\ssf is positive definite,
that \ssf$\Phi$ \ssf is strictly convex
and that \ssf$\Phi$ \ssf has a single minimum~\ssf$s$\ssf,
\linebreak
which satisfies the stationary equation \eqref{equationshift}.
Moreover \,$s\ssb=\ssb s(\delta)$ \ssf depends analytically on \ssf$\delta$,
according to the local inversion theorem,
and the derivative of
\begin{equation*}
\phi\ssf(\delta)=\ssf\Phi\ssf(s\ssf(\delta))
=\ssf\log\tfrac{h(s(\delta))}6-\langle\ssf\delta\ssf,\ssf s(\delta)\ssf\rangle
\end{equation*}
is given by
\begin{equation*}
d\ssf\phi\ssf(\delta)
=d\ssf\Phi\ssf(s(\delta))\circ d\ssf s\ssf(\delta)-s\ssf(\delta)
=-\,s\ssf(\delta)\,.
\end{equation*}
This proves (a) and the first part of (b).
Apart from (c), which is obvious,
all other claims rely on \eqref{equationshift},
which is equivalent to the system
\begin{equation}\label{systemshift}\begin{cases}
\;h(s)\,\delta_1=2\ssf\sinh s^1\!+2\ssf\sinh\ssf(\ssf s^1\!-\ssb s^2\ssf)
=4\ssf\sinh\frac{s_1}2\ssf\cosh\frac{s^2}2\ssf,\\
\;h(s)\,\delta_2=2\ssf\sinh s^2\!+2\ssf\sinh\ssf(\ssf s^2\!-\ssb s^1\ssf)
=4\ssf\sinh\frac{s_2}2\ssf\cosh\frac{s^1}2\ssf.\\
\end{cases}\end{equation}
Firstly, observe that
\begin{equation*}
s_1\left\{\begin{matrix}>\\\,=\,\\<\end{matrix}\right\}\ssf s_2
\quad\Longleftrightarrow\quad
s^1\left\{\begin{matrix}>\\\,=\,\\<\end{matrix}\right\}\ssf s^2
\quad\Longleftrightarrow\quad
\delta_1\left\{\begin{matrix}>\\\,=\,\\<\end{matrix}\right\}\ssf\delta_2
\end{equation*}
and
\begin{equation*}
\delta_j=0
\hspace{2mm}\Longleftrightarrow\hspace{2mm}
s_j=0\,.
\end{equation*}
Secondly, by adding up the equations in \eqref{systemshift}, we get
\begin{equation*}
h(s)\,|\delta|=2\, (\ssf\sinh s^1\!+\sinh s^2\ssf)\ssf.
\end{equation*}
On the one hand, we deduce that
\begin{equation*}
1-|\delta|\,
=\,\tfrac{2\,e^{-s^1}\ssb+\,2\,e^{-s^2}\ssb+\,\cosh\ssf(s^1\ssb-s^2)}
{h(s)\vphantom{0^0}}
\approx\,\tfrac{e^{\ssf|s^1\ssb-s^2|}}{e^{\,s^1\vee s^2}}\,
\approx\,e^{-\ssf s^1\ssb\wedge\ssf s^2}\,,
\end{equation*}
hence
\begin{equation*}
|\delta|\ssb\to\ssb1
\quad\Longleftrightarrow\quad
|s|\ssb\to\ssb+\infty\,.
\end{equation*}
On the other hand,
given \ssf$s\!\in\!\cl{\sector}$,
\eqref{systemshift} defines \ssf$\delta\!\in\!\cl{\sector}$
with
\begin{equation*}
|\delta|=\tfrac{\sinh s^1+\,\sinh s^2\vphantom{|}}
{\cosh s^1+\,\cosh s^2+\,\cosh\ssf(s^1\ssb-s^2)\vphantom{|}}
<1\,.
\end{equation*}
Thirdly, by substracting the equations in \eqref{systemshift}, we get
\begin{align*}
h(s)\,|\ssf\delta_1\!-\ssb\delta_2\ssf|
&=2\,|\sinh s^1\!-\sinh s^2\ssf|+4\,\sinh|\ssf s^1\!-\ssb s^2\ssf|\\
&=\bigl(4\ssf\cosh\tfrac{s^1\ssb+\ssf s^2\vphantom{|}}2
+8\ssf\cosh\tfrac{s^1\ssb-\ssf s^2\vphantom{|}}2\bigr)
\sinh\tfrac{|s^1\ssb-\ssf s^2|}2\,,
\end{align*}
hence
\begin{equation*}
|\ssf\delta_1\!-\ssb\delta_2\ssf|\,
\approx\,\tfrac{e^{\ssf\frac{s^1\ssb+\ssf s^2}2}\ssf
\sinh\frac{|s^1\ssb-\ssf s^2|}2\vphantom{\big{|}}}
{e^{\,s^1\vee s^2}\vphantom{\big{|}}}
\approx1\!-\ssb e^{-\ssf|s^1\ssb-\ssf s^2|}
\approx1\ssb\wedge\ssb|\ssf s^1\!-\ssb s^2\ssf|\,
\approx\,1\ssb\wedge\ssb|\ssf s_1\!-\ssb s_2\ssf|
\end{equation*}
and
\begin{equation*}
\delta_1\!-\ssb\delta_2\to0
\quad\Longleftrightarrow\quad
s_1\!-\ssb s_2\to0\,.
\end{equation*}
Fourthly, by rewriting \eqref{systemshift} as follows\,:
\begin{equation*}\begin{cases}
\;\frac{h(s)}2\,(\ssf1\!-\ssb\delta_1)
=\cosh s^2\ssb+e^{-s^1}\!+e^{\ssf s^2\ssb-\ssf s^1},\\
\;\frac{h(s)}2\,(\ssf1\!-\ssb\delta_2)
=\cosh s^1\ssb+e^{-s^2}\!+e^{\ssf s^1\ssb-\ssf s^2},\\
\end{cases}\end{equation*}
we get on the one hand
\begin{equation*}
1\ssb-\delta_1\!\vee\ssb\delta_2\,\approx\,e^{-|s^1\ssb-\ssf s^2|}
\end{equation*}
and on the other hand
\begin{equation*}
\tfrac{1\ssf-\,\delta_1\ssb\vee\delta_2}{\delta_1\ssb\vee\delta_2}=
e^{-|s^1\ssb-\ssf s^2|}+\O\bigl(e^{-\ssf s^1\ssb\wedge\ssf s^2}\bigr)\ssf.
\end{equation*}
This concludes the proof of Lemma \ref{PropertiesShiftA2}
\end{proof}

By symmetry, we may assume from now that \ssf$x_1\!\ge\ssb x_2$\ssf,
which amounts to either condition \ssf$\delta_1\!\ge\ssb\delta_2$\ssf, $s^1\!\ge\ssb s^2$ or \ssf$s_1\!\ge\ssb s_2$\ssf,
according to Lemma \ref{PropertiesShiftA2}.(g).
Beside the walls
\ssf$\{\alpha_1\!=\ssb0\ssf\}\ssb=\ssb\R\ssf\lambda_1$
and \ssf$\{\alpha_2\!=\ssb0\ssf\}\ssb=\ssb\R\ssf\lambda_2$
of the Weyl chamber $\sector$,
consider the \textit{extra wall\/}
\begin{equation}\label{ExtraWall}
\{\ssf\alpha_1\!=\ssb\alpha_2\}
=\{\ssf\lambda_1\!=\ssb\lambda_2\}
=\R\,\rho\,.
\end{equation}
\vspace{-4mm}

\begin{corollary}\label{NonzeroDenominator}
Assume that \,$\delta$ or equivalently \,$s$ stays away from \,$0$\ssf.
\begin{description}[labelindent=0pt,labelwidth=5mm,labelsep*=1pt,leftmargin=!]
\item[\rm(a)]
The following estimate holds,
provided that \,$x^+$ stays far enough from the extra wall \eqref{ExtraWall}\,{\rm:}
\vspace{-1mm}
\begin{equation*} 
\bigl|\,h(s\!+\!i\ssf\theta)\ssb+\ssb2\,\tfrac{n\ssf+\ssf1}{n\ssf+\ssf3}\ssf\bigr|
\gtrsim\tfrac{h(s)}n\,.
\end{equation*}
\item[\rm(b)]
The improved estimate
\vspace{-.5mm}
\begin{equation*} 
\bigl|\,h(s\!+\!i\ssf\theta)\ssb+\ssb2\,\tfrac{n\ssf+\ssf1}{n\ssf+\ssf3}\ssf\bigr|
\approx h(s)
\end{equation*}
\vspace{-4.5mm}

holds in the following two cases\,{\rm:}
\begin{description}[labelindent=2pt,labelwidth=4mm,labelsep*=1pt,leftmargin=!]
\item[\textbullet]
$\theta$ \ssf is close enough to the extra wall \eqref{ExtraWall}
and \,$n$ is large enough,
\item[\textbullet]
$\delta$ or equivalently \,$s$
stays away from the extra wall \eqref{ExtraWall}
and \,$n$ is large enough.
\end{description}
\end{description}
\end{corollary}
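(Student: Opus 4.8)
\smallskip

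The plan is to evaluate $h(s+i\ssf\theta)+2$ by the product formula \eqref{ProductFormulaA2} and reduce everything to an elementary estimate on a single factor. With the notation $z^{\ssf j}=\langle\lambda_j,z\rangle$ of Lemma \ref{PropertiesShiftA2}, and writing $t=s^1-s^2\ge0$ and $\omega=\theta^1-\theta^2$, formula \eqref{ProductFormulaA2} at $z=s+i\ssf\theta$ reads
\begin{equation*}
h(s+i\ssf\theta)+2=\underbrace{\bigl(e^{\ssf s^1+i\ssf\theta^1}+1\bigr)}_{A}\,
\underbrace{\bigl(e^{-s^2-i\ssf\theta^2}+1\bigr)}_{B}\,
\underbrace{\bigl(e^{\ssf(s^2-s^1)-i\ssf\omega}+1\bigr)}_{C}.
\end{equation*}
From $|e^{\ssf a+i\ssf b}+1|^2=(e^{\ssf a}-1)^2+4\,e^{\ssf a}\cos^2\tfrac b2$ one reads off $|A|^2=(e^{s^1}-1)^2+4\,e^{s^1}\cos^2\tfrac{\theta^1}2$, $|B|^2=(1-e^{-s^2})^2+4\,e^{-s^2}\cos^2\tfrac{\theta^2}2$ and $|C|^2=(1-e^{-t})^2+4\,e^{-t}\cos^2\tfrac\omega2$. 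Since $s$ is bounded away from $0$ and $s\in\cl{\sector}$ forces $\tfrac{s^1}2\le s^2\le s^1$ (using the standing assumption $s^1\ge s^2$), both $s^1$ and $s^2$ are bounded below; hence $|A|\approx e^{s^1}\approx h(s)$ by Lemma \ref{PropertiesShiftA2}(c) and $|B|\approx1$, so that $|h(s+i\ssf\theta)+2|\approx h(s)\,|C|$. This reduces the corollary to two–sided bounds for $|C|$, together with the unconditional upper bound $|h(s+i\ssf\theta)+2|=|A||B||C|\le(e^{s^1}+1)(e^{-s^2}+1)(e^{s^2-s^1}+1)=h(s)+2\lesssim h(s)$ (the last step because $h(s)\ge h(0)=6$), and with the observation that $h(s+i\ssf\theta)+2\,\tfrac{n+1}{n+3}=\bigl(h(s+i\ssf\theta)+2\bigr)-\tfrac4{n+3}$, where the shift $\tfrac4{n+3}$ is precisely the constant produced by the differentiation formula \eqref{DifferentiationFormulaA2}.

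For part (b) I would argue as follows. If $s$ stays away from the extra wall then $t\ge t_0>0$, whence $|C|^2\ge(1-e^{-t_0})^2>0$; if instead $\theta$ stays close enough to the extra wall that $\cos^2\tfrac\omega2\ge\tfrac12$, then the elementary identity $(1-x)^2+2x=1+x^2$ (with $x=e^{-t}\in(0,1]$) gives $|C|^2\ge1$. In both cases $|h(s+i\ssf\theta)+2|\ge c\,h(s)\ge6\,c$ for a fixed $c>0$, so for $n$ large enough $\tfrac4{n+3}\le\tfrac12|h(s+i\ssf\theta)+2|$, and therefore $|h(s+i\ssf\theta)+2\,\tfrac{n+1}{n+3}|\ge\tfrac12|h(s+i\ssf\theta)+2|\gtrsim h(s)$; combined with the upper bound above this is the claimed $\approx h(s)$.

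For part (a) the delicate point is that $|h(s+i\ssf\theta)+2|\approx h(s)|C|$ may genuinely be as small as $h(s)/n$ — this occurs near the extra wall and near $\omega=\pi$, where $h(s+i\ssf\theta)+2$ is in fact a small \emph{positive real} number — so subtracting the fixed–size shift $\tfrac4{n+3}$ could a priori produce cancellation. The remedy is to keep $x^+$ genuinely far from the extra wall: one always has $|C|\ge1-e^{-t}$, and by Lemma \ref{PropertiesShiftA2}(g) together with the definition of $\delta$ at the start of Section \ref{sectionA2}, $1-e^{-t}\gtrsim\delta_1-\delta_2=\tfrac{x_1-x_2}{n+2}$. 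Reading ``$x^+$ stays far enough from the extra wall'' as $x_1-x_2\ge C_0$ for a suitably large fixed constant $C_0$ (depending only on the structural constants and on the lower bound for $|s|$), one gets $|h(s+i\ssf\theta)+2|\ge c'\,h(s)\,\tfrac{x_1-x_2}{n+2}\ge\tfrac{6c'C_0}{n+2}\ge\tfrac8{n+2}>2\cdot\tfrac4{n+3}$, so the shift is swamped and
\begin{equation*}
\Bigl|\,h(s+i\ssf\theta)+2\,\tfrac{n+1}{n+3}\,\Bigr|\ge|h(s+i\ssf\theta)+2|-\tfrac4{n+3}\ge\tfrac12|h(s+i\ssf\theta)+2|\gtrsim h(s)\,\tfrac{x_1-x_2}{n}\gtrsim\tfrac{h(s)}{n}.
\end{equation*}

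I expect this competition between the Gaussian–type smallness of $h(s+i\ssf\theta)+2$ near the extra wall and the fixed scale $\tfrac4{n+3}$ of the shift to be the only real obstacle; once it is dealt with, the rest is the elementary bookkeeping of the moduli of $A$, $B$, $C$ and the use of Lemma \ref{PropertiesShiftA2}. This also explains why part (a) only asserts $\gtrsim h(s)/n$, whereas part (b) — where $|C|$ is uniformly bounded below — gives the stronger $\approx h(s)$.
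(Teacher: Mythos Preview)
Your proof is correct and follows essentially the same approach as the paper: factor $h(s+i\theta)+2$ via the product formula \eqref{ProductFormulaA2}, use that $|A|\approx h(s)$ and $|B|\approx1$ under the standing hypothesis that $s$ stays away from $0$, reduce to estimating the single factor $|C|$ from below (by $1-e^{-t}\approx\delta_1-\delta_2$ in general, and by a fixed constant when $\theta$ is near the extra wall or when $s$ stays away from it), and finally absorb the shift $\tfrac4{n+3}$. Your explicit identity $|e^{a+ib}+1|^2=(e^a-1)^2+4e^a\cos^2\tfrac b2$ makes the argument slightly more quantitative than the paper's, but the structure is identical; one small inaccuracy is your parenthetical remark that $h(s+i\theta)+2$ is a ``small positive real'' near $\omega=\pi$---it is generally complex since $A$ and $B$ are---but this plays no role in your actual argument.
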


\begin{proof}
The upper estimate
\begin{equation*}
\bigl|\,h(s\!+\!i\ssf\theta)\ssb+\ssb2\,\tfrac{n\ssf+\ssf1}{n\ssf+\ssf3}\ssf\bigr|
\le h(s)\ssb+\ssb2
\le\tfrac43\,h(s)
\end{equation*}
is elementary and holds in full generality.
Let us turn to the lower estimates.
We deduce from \eqref{ProductFormulaA2} that
\begin{equation*}
|\,h(s\!+\!i\ssf\theta)\ssb+\ssb2\,|\ge
\bigl\{e^{\ssf s^1}\hspace{-1mm}-\ssb1\bigr\}\ssf
\bigl\{1\ssb-e^{-s^2}\bigr\}\,
\bigl|\ssf e^{\,i\ssf(\theta^1\ssb-\ssf\theta^2)}\!
+\ssb e^{-(s^1\ssb-\ssf s^2)}\bigr|
\end{equation*}
and we use Lemma \ref{PropertiesShiftA2} 
to estimate the three factors on the right hand side.
On the one hand,
\begin{equation*}
e^{\ssf s^1}\hspace{-1mm}-\ssb1\,\approx\,e^{\ssf s^1}\ssb\approx\,h(s)
\quad\text{and}\quad
1\ssb-e^{-s^2}\ssb\approx\,1\,.
\end{equation*}
On the other hand,
\begin{equation*}
\bigl|\ssf e^{\,i\ssf(\theta^1\ssb-\ssf\theta^2)}\!
+\ssb e^{-(s^1\ssb-\ssf s^2)}\bigr|
\ge1\ssb-e^{-(s^1\ssb-\ssf s^2)}
\approx\ssf\delta_1\!-\ssb\delta_2
=\tfrac{x_1-\ssf x_2}{n\ssf+\ssf2}
\end{equation*}
in general and
\begin{equation*}
\bigl|\ssf e^{\,i\ssf(\theta^1\ssb-\ssf\theta^2)}\!
+\ssb e^{-(s^1\ssb-\ssf s^2)}\bigr|
\approx1
\end{equation*}
if \,$\theta^1\hspace{-1mm}-\!\theta^2$ is small.
Consequently,
\begin{equation*}
\bigl|\,h(s\!+\!i\ssf\theta)\ssb+\ssb2\,\tfrac{n\ssf+\ssf1}{n\ssf+\ssf3}\ssf\bigr|
\ge|\,h(s\!+\!i\ssf\theta)\ssb+\ssb2\,|-\tfrac4{n\ssf+\ssf3}
\end{equation*}
is bounded from below by \ssf$\frac{h(s)}n$,
if \ssf$x_1\!-\ssb x_2$ is large enough,
and by \ssf$h(s)$\ssf,
if \ssf$n$ \ssf is large enough
and if \,$\delta_1\!-\ssb\delta_2$ \ssf stays away from \ssf$0$
\ssf or if \,$\theta^1\hspace{-1mm}-\!\theta^2$ is sufficiently small.
\end{proof} 

\subsection{Complex phase \ssf$\Psi$\ssf.}
\label{ComplexPhase}

In this subsection, we study the function
\begin{equation*}
\Psi(\theta)
=\log\tfrac{h(s\ssf+\ssf i\ssf\theta)}{h(s)}
-i\ssf\langle\ssf\delta,\theta\ssf\rangle\ssf,
\end{equation*}
which does depend on \ssf$\delta$\ssf, or equivalently on \ssf$s$\ssf,
and which is well defined in a neighborhood of the origin in \ssf$\apartment$\ssf,
but independently of \ssf$\delta$ \ssf and \ssf$s$\ssf.
We have \ssf$\Psi(0)\ssb=\ssb0$ \ssf by definition
and \ssf$d\ssf\Psi(0)\ssb=\ssb0$
\ssf according to our choice of \ssf$s$\ssf.
Next lemma describes more precisely
the behavior of \,$\Psi$ \ssf near the origin.

\begin{lemma}\label{LocalBehaviorPsi}
The following results hold uniformly with respect to \,$\delta$ and \,$s$\;{\rm:}
\begin{description}[labelindent=0pt,labelwidth=5mm,labelsep*=1pt,leftmargin=!]
\item[\rm(a)]
The Hessian \,$d^{\ssf2}\Psi(0)$ is negative definite
and \,$B\ssb=\ssb-\,d^{\ssf2}\Psi(0)$ satisfies
\begin{equation*}
B(\theta,\theta)\approx\ssf
e^{-(s^1\ssb-\ssf s^2)}\ssf(\theta^1\!-\ssb\theta^2)^2
+e^{-s^2}(\theta^1\!+\ssb\theta^2)^2
\quad\forall\;\theta\!\in\!\apartment\,,
\end{equation*}
with \,$e^{-(s^1\ssb-\ssf s^2)}\ssb\ge\ssb e^{-s^2}$\ssb.
\item[\rm(b)]
For \,$\theta$ small,
\begin{equation*}
-\Re\Psi(\theta)\approx B(\theta,\theta)
\quad\text{and}\quad
\Im\Psi(\theta)=\O\ssf\bigl(\ssf|\theta|\,B(\theta,\theta)\bigr)\ssf.
\end{equation*}
\end{description}
\end{lemma}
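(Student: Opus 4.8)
The plan is to recognize that, up to the explicit linear term, $\Psi$ is just $\log h$ evaluated in imaginary directions, so that $B$ is the Hessian of $\log h$ at the critical point $s$; then read off its size from the Hessian formula \eqref{Hessian1}; and finally obtain (b) from a Taylor expansion of $\Psi$ near $0$, interpreting $h(s+i\theta)/h(s)$ as a characteristic function.

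\emph{Proof of (a).} Since $\theta\longmapsto-i\langle\delta,\theta\rangle$ is linear it does not contribute to second derivatives, hence $d^{\ssf2}\Psi(0)(\theta,\theta)=\frac{d^{\ssf2}}{dt^{\ssf2}}\big|_{t=0}\log h(s+it\theta)=-\,d^{\ssf2}(\log h)(s)(\theta,\theta)$, which is negative definite by \eqref{Hessian1} (or by the strict convexity of $\Phi$ established in Lemma \ref{PropertiesShiftA2}); thus $B=d^{\ssf2}(\log h)(s)$. Here $\Lambda=W_0.\lambda_1\cup W_0.\lambda_2=\{\pm\lambda_1,\pm\lambda_2,\pm(\lambda_1-\lambda_2)\}$, one has $h(s)\approx e^{\ssf s^1\vee s^2}=e^{\ssf s^1}$ by Lemma \ref{PropertiesShiftA2}.(c) (recall $s^1\ge s^2$), and $0\le s^1-s^2\le s^2\le s^1$, the middle inequality being nothing but $\langle\alpha_2,s\rangle=2s^2-s^1\ge0$, which is also exactly the asserted relation $e^{-(s^1-s^2)}\ge e^{-s^2}$. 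Now split the finitely many pairs in \eqref{Hessian1}: for those with $\lambda-\lambda'$ proportional to $\lambda_1-\lambda_2$ the factor $\langle\lambda-\lambda',\theta\rangle$ is a multiple of $\theta^1-\theta^2$ and the largest coefficient $e^{\langle\lambda+\lambda',s\rangle}$ is $e^{\ssf s^1+s^2}$, attained at $\{\lambda_1,\lambda_2\}$; for all other pairs $\langle\lambda+\lambda',s\rangle\le\langle\alpha_1,s\rangle=2s^1-s^2$ while $\langle\lambda-\lambda',\theta\rangle^2\lesssim(\theta^1-\theta^2)^2+(\theta^1+\theta^2)^2$. Dividing by $h(s)^2\approx e^{\ssf2s^1}$ and using $e^{-s^2}\le e^{-(s^1-s^2)}$ yields the upper bound $B(\theta,\theta)\lesssim e^{-(s^1-s^2)}(\theta^1-\theta^2)^2+e^{-s^2}(\theta^1+\theta^2)^2$. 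For the matching lower bound, keep in \eqref{Hessian1} only the two pairs $\{\lambda_1,\lambda_2\}$ (which contributes $\approx e^{-(s^1-s^2)}(\theta^1-\theta^2)^2$) and $\{\lambda_1,\lambda_1-\lambda_2\}$ (coefficient $e^{\langle\alpha_1,s\rangle}=e^{\ssf2s^1-s^2}$, difference $\lambda_2$, hence contributing $\approx e^{-s^2}(\theta^2)^2$), drop the remaining nonnegative terms, and conclude with the elementary inequality $(\theta^1+\theta^2)^2\lesssim(\theta^1-\theta^2)^2+(\theta^2)^2$ together again with $e^{-s^2}\le e^{-(s^1-s^2)}$. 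Since $h$ has only six exponential terms, all constants are uniform in $s$, in particular as $s\to0$, where $B\to d^{\ssf2}(\log h)(0)$ and the right-hand side tends to a fixed positive definite form.

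\emph{Proof of (b).} The function $\Psi$ extends holomorphically to a fixed complex neighbourhood of $0$ with $\Psi(0)=0$, $d\ssf\Psi(0)=0$ and $d^{\ssf2}\Psi(0)=-B$. It is convenient to write $h(s+i\theta)/h(s)=\mathbb{E}\ssf e^{\,i\langle L,\theta\rangle}$, the characteristic function of the random weight $L\in\Lambda$ with $\mathbb{P}(L=\lambda)=e^{\langle\lambda,s\rangle}/h(s)$; by \eqref{gradient}--\eqref{equationshift} one has $\mathbb{E}\ssf L=d(\log h)(s)=\delta$, so that $\Psi(\theta)=\log\mathbb{E}\ssf e^{\,iY}$ with $Y=\langle L-\mathbb{E}\ssf L,\theta\rangle$ centered, $\operatorname{Var}Y=B(\theta,\theta)$, and $|Y|\le(\operatorname{diam}\Lambda)\,|\theta|=:R$ almost surely. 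Expanding the exponential, $\mathbb{E}\ssf e^{\,iY}=1-\tfrac12\ssf B(\theta,\theta)+O(R\,B(\theta,\theta))$: the imaginary part is $\mathbb{E}(\sin Y-Y)=O(R\,B(\theta,\theta))$ and the real correction is $\mathbb{E}(\cos Y-1+\tfrac12Y^2)=O(R^{\ssf2}B(\theta,\theta))$, using $|Y|\le R$ and $\mathbb{E}\ssf Y=0$. Since $B(\theta,\theta)\le R^{\ssf2}\lesssim|\theta|^2$, this expression is within $O(|\theta|^2)$ of $1$, so for $|\theta|$ small enough (uniformly in $\delta$ and $s$) we may take its principal logarithm, getting $\Psi(\theta)=-\tfrac12\ssf B(\theta,\theta)\bigl(1+O(|\theta|)\bigr)+i\,O\bigl(|\theta|\,B(\theta,\theta)\bigr)$. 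Taking real and imaginary parts gives precisely $-\Re\Psi(\theta)\approx B(\theta,\theta)$ and $\Im\Psi(\theta)=O(|\theta|\,B(\theta,\theta))$.

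The main obstacle is the bookkeeping in part (a): one must track the six weights of $\Lambda$ together with all their pairwise sums and differences, and organize the resulting quadratic form precisely enough to isolate the two directions $\theta^1-\theta^2$ and $\theta^1+\theta^2$ with their \emph{different} exponential weights — the subtle point being that the coefficient of $(\theta^1+\theta^2)^2$ is the smaller quantity $e^{-s^2}$ rather than the naive $e^{-(s^1-s^2)}$, which forces one to exploit the cancellation built into \eqref{Hessian1} (the large coefficient $e^{\langle\alpha_1,s\rangle}$ feeds the direction $\theta^2$, not $\theta^1+\theta^2$) instead of reading the estimate off a single term — and to do all of this with constants uniform over the whole chamber $\cl{\sector}$, including near its walls and near the origin.
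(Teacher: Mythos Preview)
Your proof is correct. Part (a) follows the paper's approach: identify $B=d^{\ssf2}(\log h)(s)$, pick out the two leading pairs $\{\lambda_1,\lambda_2\}$ and $\{\lambda_1,\lambda_1-\lambda_2\}$ in the Hessian formula \eqref{Hessian1}, and pass from $\langle\lambda_2,\theta\rangle^2$ to $\langle\lambda_1+\lambda_2,\theta\rangle^2$ via $e^{-s^2}\le e^{-(s^1-s^2)}$. The organization is slightly different (you run separate upper/lower bounds rather than asserting ``leading coefficients''), but the substance is the same.

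Part (b), however, is a genuinely different argument. The paper writes the integral Taylor formula $\Psi(\theta)=\int_0^1\langle d^{\ssf2}\Psi(t\theta)\theta,\theta\rangle(1-t)\,dt$, expands the Hessian as a sum over four-tuples $(\lambda,\lambda',\mu,\mu')$ as in \eqref{Hessian2}, and identifies the leading terms ($\mu=\mu'=\lambda_1$, $\{\lambda,\lambda'\}$ as in part (a)) to control the cosine and sine separately. Your route is probabilistic: regard $h(s+i\theta)/h(s)$ as the characteristic function of a bounded random weight $L$ with mean $\delta$ (so that $\Psi$ is the cumulant generating function of the centered $Y=\langle L-\delta,\theta\rangle$), and then the third-moment bound $\mathbb{E}|Y|^3\le R\,\mathbb{E}Y^2=R\,B(\theta,\theta)$ does all the work at once. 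This is cleaner and explains conceptually why the error is exactly $|\theta|\,B(\theta,\theta)$ rather than $|\theta|^3$: what matters is one extra factor of $|Y|\le R$ on top of the variance. The paper's approach, on the other hand, stays purely analytic and reuses the same double-sum structure as in (a), which makes the parallel between the static Hessian and the $\theta$-dependent expansion \eqref{Hessian4} transparent. Both approaches are uniform in $s$ for the same reason: only finitely many bounded weights are involved.
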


\begin{proof}
Let us compute the Hessian of \ssf$\Psi$,
as we did for \ssf$\log h$ \ssf in \eqref{Hessian1}\,:
\begin{equation}\label{Hessian2}\begin{aligned}
&d^{\ssf2}\Psi(\theta)
=\ssf\tfrac{-1}{2\,h(s+i\ssf\theta)^2}\,
\sum\nolimits_{\ssf\lambda,\lambda'\ssb\in\Lambda}\ssb
e^{\ssf\langle\lambda+\lambda'\!,\ssf s+i\ssf\theta\ssf\rangle}\ssf
(\lambda\!-\!\lambda')\!\otimes\!(\lambda\!-\!\lambda')\\
&=\ssf\tfrac{-1}{2\,|h(s+i\ssf\theta)|^4}\,
\sum\nolimits_{\ssf\lambda,\lambda'\!,\mu,\mu'\ssb\in\Lambda}\ssb
e^{\ssf\langle\lambda+\lambda'\!+\mu+\mu'\!,\ssf s\ssf\rangle}\ssf
e^{\ssf i\ssf\langle\lambda+\lambda'\!-\mu-\mu'\!,\ssf\theta\ssf\rangle}\ssf
(\lambda\!-\!\lambda')\!\otimes\!(\lambda\!-\!\lambda')\,.
\end{aligned}\end{equation}
Observe that, in the nonnegative quadratic form
\begin{equation*}
B(\theta,\theta)=\tfrac1{2\,h(s)^2}\,
\sum\nolimits_{\ssf\lambda,\lambda'\ssb\in\Lambda}\ssb
e^{\ssf\langle\lambda+\lambda'\!,\ssf s\ssf\rangle}\ssf
\langle\ssf\lambda\!-\!\lambda'\ssb,\theta\ssf\rangle^2\,,
\end{equation*}
the leading coefficients arise when
\begin{equation}\label{choices}
\{\lambda,\lambda'\}=\begin{cases}
\ssf\{\lambda_1,\lambda_2\}\ssf,\\
\ssf\{\lambda_1,\lambda_1\!-\!\lambda_2\}\ssf.\\
\end{cases}\end{equation}
Consequently,
\begin{equation}\label{Hessian3}
B(\theta,\theta)\approx\ssf h(s)^{-2}\,\bigl\{\ssf
e^{\ssf\langle\ssf\rho,\ssf s\ssf\rangle}\ssf
\langle\ssf\lambda_1\!-\!\lambda_2,\theta\ssf\rangle^2\ssb
+e^{\ssf\langle\ssf\alpha_1,\ssf s\ssf\rangle}\ssf
\langle\ssf\lambda_2,\theta\ssf\rangle^2\ssf\bigr\}\ssf,
\end{equation}
with \,$h(s)\approx e^{\ssf\langle\ssf\lambda_1,\ssf s\ssf\rangle}$\ssf.
Furthermore, as \ssf$e^{\ssf\langle\ssf\rho,s\ssf\rangle}\!
\ge\ssb e^{\ssf\langle\ssf\alpha_1,s\ssf\rangle}$\ssf,
one may replace in \eqref{Hessian3}
\ssf$\langle\ssf\lambda_2,\theta\ssf\rangle^2$
\ssf by \ssf$\langle\ssf\lambda_1\!+\!\lambda_2,\theta\ssf\rangle^2$\ssf.
This proves (a) and the proof of (b) is somewhat similar.
According to Taylor's formula and \eqref{Hessian2},
we have indeed
\vspace{-1mm}
\begin{equation*}
\Psi(\theta)=\int_{\,0}^{\ssf1}
\langle\ssf d^{\ssf2}\Psi(t\ssf\theta)\ssf\theta,\theta\ssf\rangle\,
(1\!-\ssb t)\,dt\,,
\end{equation*}
\vspace{-4mm}

with
\vspace{1mm}
\begin{equation}\label{Hessian4}
\langle\ssf d^{\ssf2}\Psi(t\ssf\theta)\ssf\theta,\theta\ssf\rangle
=\tfrac{-1}{2\,|h(s+i\ssf t\ssf\theta)|^4}\hspace{-2mm}
\sum_{\lambda,\lambda'\!,\mu,\mu'\ssb\in\Lambda}\hspace{-2mm}
e^{\ssf\langle\lambda+\lambda'\!+\mu+\mu'\!,\ssf s\ssf\rangle}\ssf
e^{\ssf i\ssf\langle\lambda+\lambda'\!-\mu-\mu'\!,\ssf t\ssf \theta\ssf\rangle}\ssf
\langle\ssf\lambda\!-\!\lambda'\ssb,\theta\ssf\rangle^2\ssf.
\end{equation}
\vspace{-2.5mm}

The leading coefficients in \eqref{Hessian4} are obtained
by taking \ssf$\{\lambda,\lambda^{\ssf\prime}\}$ \ssf as in \eqref{choices}
and \ssf$\mu\ssb=\ssb\mu^{\ssf\prime}\!=\ssb\lambda_1$.
Hence, if \ssf$\theta$ \ssf is small,
\begin{align*}
-\Re\,\langle\ssf d^{\ssf2}\Psi(t\ssf\theta)\ssf\theta,\theta\ssf\rangle
&=\ssf|h(s\!+\!i\ssf t\ssf\theta)|^{-4}\\
&\ssf\times\hspace{-2mm}
\sum\limits_{\lambda,\lambda'\!,\mu,\mu'\ssb\in\Lambda}\hspace{-2mm}
e^{\ssf\langle\lambda+\lambda'\!+\mu+\mu'\!,\ssf s\ssf\rangle}\,
\underbrace{\cos\,\langle\ssf\lambda\!+\!\lambda^{\ssf\prime}\hspace{-1mm}
-\!\mu\!-\!\mu^{\ssf\prime}\ssb,\ssf t\,\theta\ssf\rangle}_{\ge\,\const\ssf>\,0}\;
\langle\ssf\lambda\!-\!\lambda'\ssb,\theta\ssf\rangle^2
\end{align*}
\vspace{-3mm}

is comparable to \eqref{Hessian3}, while
\begin{align*}
-\Im\,\langle\ssf d^{\ssf2}\Psi(t\ssf\theta)\ssf\theta,\theta\ssf\rangle
&=\ssf|h(s\!+\!i\ssf t\ssf\theta)|^{-4}\\
&\ssf\times\hspace{-2mm}
\sum\limits_{\lambda,\lambda'\!,\mu,\mu'\ssb\in\Lambda}\hspace{-2mm}
e^{\ssf\langle\lambda+\lambda'\!+\mu+\mu'\!,\ssf s\ssf\rangle}\,
\underbrace{\sin\,\langle\ssf\lambda\!+\!\lambda^{\ssf\prime}\hspace{-1mm}
-\!\mu\!-\!\mu^{\ssf\prime}\ssb,\ssf t\,\theta\ssf\rangle}_{\O(|\theta|)}\;
\langle\ssf\lambda\!-\!\lambda'\ssb,\theta\ssf\rangle^2
\end{align*}
\vspace{-3.5mm}

is \ssf$\O\ssf\bigl(\ssf|\theta|\,B(\theta,\theta)\bigr)$\ssf.
\end{proof}

Beside the local behavior of $\Psi$,
we shall need the following global estimate.

\begin{lemma}\label{GlobalEstimatePsi}
For every \,$\theta\!\in\!U$\ssb,
\begin{equation*}
-\ssf\log\tfrac{|h(s\ssf+\ssf i\ssf\theta)|}{h(s)}
\ssf\gtrsim B(\theta,\theta)\ssf.
\end{equation*}
\end{lemma}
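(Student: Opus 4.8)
The plan is to reduce the logarithmic estimate to an $L^2$-type one and then bound the resulting nonnegative trigonometric sum from below by $B(\theta,\theta)$ using only a couple of its leading terms. First I would use the elementary inequality $-\log t\ge\frac12(1-t^2)$ for $0<t\le1$ (which is just $\log u\le u-1$ with $u=t^2$), together with $|h(s+i\theta)|\le h(s)$, to get
\[
-\log\frac{|h(s+i\theta)|}{h(s)}\ \ge\ \frac{h(s)^2-|h(s+i\theta)|^2}{2\,h(s)^2}\,.
\]
Since $h(s+i\theta)=\sum_{\lambda\in\Lambda}e^{\langle\lambda,s\rangle}e^{i\langle\lambda,\theta\rangle}$, expanding the square and using $h(s)^2=\sum_{\lambda,\lambda'\in\Lambda}e^{\langle\lambda+\lambda',s\rangle}$ together with $1-\cos u=2\sin^2\frac u2$, the numerator becomes
\[
h(s)^2-|h(s+i\theta)|^2\ =\ 2\sum_{\lambda,\lambda'\in\Lambda}e^{\langle\lambda+\lambda',s\rangle}\,\sin^2\frac{\langle\lambda-\lambda',\theta\rangle}{2}\,.
\]

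Next, since every summand is nonnegative, I would discard all but the two pairs $\{\lambda,\lambda'\}=\{\lambda_1,\lambda_2\}$ and $\{\lambda_1,\lambda_1-\lambda_2\}$ — precisely the pairs that produce the leading coefficients of $B$ in the proof of Lemma \ref{LocalBehaviorPsi}. Using $\lambda_1+\lambda_2=\rho$ with difference $\pm(\lambda_1-\lambda_2)$, $\lambda_1+(\lambda_1-\lambda_2)=\alpha_1$ with difference $\pm\lambda_2$, and $\langle\lambda_1-\lambda_2,\theta\rangle=\theta^1-\theta^2$, $\langle\lambda_2,\theta\rangle=\theta^2$, this gives
\[
-\log\frac{|h(s+i\theta)|}{h(s)}\ \gtrsim\ \frac{1}{h(s)^2}\Bigl\{e^{\langle\rho,s\rangle}\sin^2\tfrac{\theta^1-\theta^2}{2}+e^{\langle\alpha_1,s\rangle}\sin^2\tfrac{\theta^2}{2}\Bigr\}\,.
\]
This is where $\theta\in U$ enters: both $3(\lambda_1-\lambda_2)=\alpha_1-\alpha_2$ and $3\lambda_2=(\alpha_1+\alpha_2)+\alpha_2$ are sums of two roots, so $|\theta^1-\theta^2|\le\tfrac43\pi$ and $|\theta^2|\le\tfrac43\pi$ on $U$; and on the compact interval $[-\tfrac43\pi,\tfrac43\pi]$, which contains no nonzero integer multiple of $2\pi$, the function $t\mapsto\sin^2(t/2)/t^2$ extends continuously (value $\tfrac14$ at the origin) and is strictly positive, hence $\ge c_0>0$ there. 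Plugging this in and using $h(s)\approx e^{s^1}$ (Lemma \ref{PropertiesShiftA2}(c) and the standing assumption $s^1\ge s^2$), so that $e^{\langle\rho,s\rangle}h(s)^{-2}\approx e^{-(s^1-s^2)}$ and $e^{\langle\alpha_1,s\rangle}h(s)^{-2}\approx e^{-s^2}$, I would arrive at
\[
-\log\frac{|h(s+i\theta)|}{h(s)}\ \gtrsim\ e^{-(s^1-s^2)}(\theta^1-\theta^2)^2+e^{-s^2}(\theta^2)^2\,,
\]
uniformly in $s$ (equivalently $\delta$). Finally this right-hand side is $\approx B(\theta,\theta)$ by Lemma \ref{LocalBehaviorPsi}(a): replacing $(\theta^2)^2$ by $(\theta^1+\theta^2)^2$ (and conversely) changes the expression only by bounded factors, since these quantities differ by $\theta^1-\theta^2$ and the coefficient $e^{-(s^1-s^2)}$ dominates $e^{-s^2}$.

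The one delicate point is the step using $\theta\in U$: one must verify that the two retained weights pair with every $\theta\in U$ inside the open interval $(-2\pi,2\pi)$, on which $\sin^2(\cdot/2)$ is comparable to the square of its argument. This is exactly why these pairs — rather than, say, $\{\lambda_1,-\lambda_1\}$, for which $\langle\lambda-\lambda',\theta\rangle=2\theta^1$ can attain $2\pi$ on $U$ — are the right ones to keep. Everything else is routine manipulation of the explicit function $h$ and of the uniform comparisons already recorded in Lemmas \ref{PropertiesShiftA2} and \ref{LocalBehaviorPsi}.
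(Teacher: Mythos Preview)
Your proof is correct and follows essentially the same route as the paper's: reduce to the quadratic quantity $\tfrac{h(s)^2-|h(s+i\theta)|^2}{2h(s)^2}$ via the elementary log inequality, expand as a nonnegative sum of $e^{\langle\lambda+\lambda',s\rangle}\sin^2\tfrac{\langle\lambda-\lambda',\theta\rangle}{2}$, keep only the two pairs $\{\lambda_1,\lambda_2\}$ and $\{\lambda_1,\lambda_1-\lambda_2\}$, use that $|\langle\lambda_1-\lambda_2,\theta\rangle|,|\langle\lambda_2,\theta\rangle|\le\tfrac{4\pi}{3}$ on $U$ to replace $\sin^2$ by squares, and conclude via $h(s)\approx e^{s^1}$ and the comparison with $B$ from Lemma~\ref{LocalBehaviorPsi}(a). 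The only cosmetic differences are that the paper places the log inequality at the end and symmetrizes the sum to $\cosh\langle\lambda+\lambda',s\rangle$ (by including the pairs $-\{\lambda_1,\lambda_2\}$, $-\{\lambda_1,\lambda_1-\lambda_2\}$ as well), which is harmless since $s\in\cl{\sector}$.
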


\begin{remark}
Such a global estimate is hard to obtain for general random walks on affine buildings $($see \cite{T}$)$.
\end{remark}

\begin{figure}[h]
\begin{center}
\psfrag{0}[c]{$0$}
\psfrag{bullet}[c]{\textbullet}
\psfrag{U}[c]{$U$}
\psfrag{pialpha1}[l]{$\pi\ssf\alpha_1$}
\psfrag{pialpha2}[r]{$\pi\ssf\alpha_2$}
\psfrag{pirho}[c]{$\pi\ssf\rho$}
\psfrag{2pilambda1}[l]{$2\ssf\pi\lambda_1$}
\psfrag{2pilambda2}[r]{$2\ssf\pi\lambda_2$}
\psfrag{2pilambda1lambda2}[l]{$2\ssf\pi\ssf(\lambda_1\!-\ssb\lambda_2)$}
\includegraphics[width=100mm]{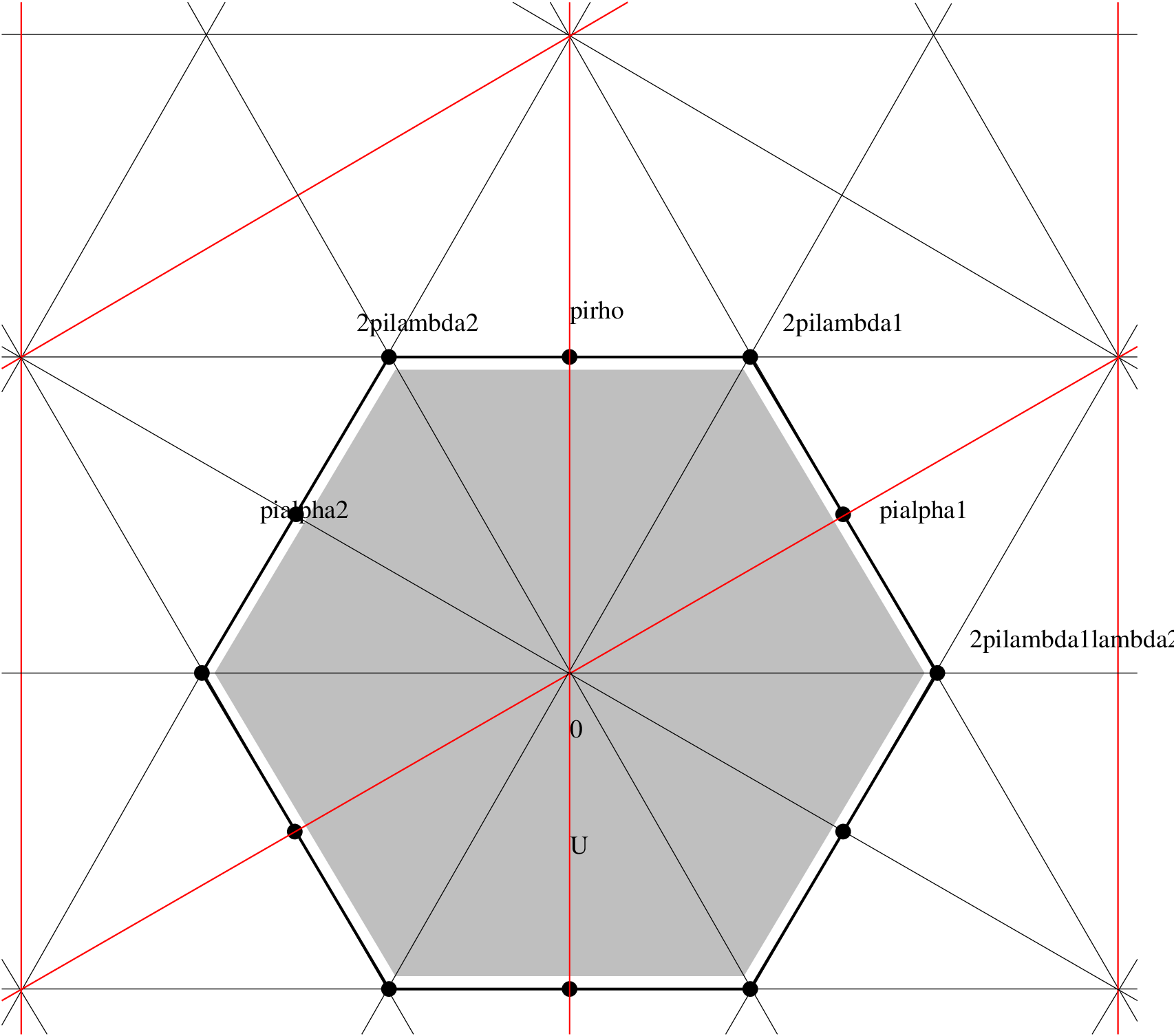}
\end{center}
\caption{Picture for the proof of Lemma \ref{GlobalEstimatePsi}}
\label{zeroes}
\end{figure}

\begin{proof}[Proof of Lemma \ref{GlobalEstimatePsi}]
Let us expand
\begin{equation*}\begin{aligned}
h(s)^2\!-\ssb|h(s\!+\!i\ssf\theta)|^2
&=\sum\nolimits_{\ssf\lambda,\lambda'\ssb\in\Lambda}\ssb
e^{\ssf\langle\lambda+\lambda'\!,\ssf s\ssf\rangle}\,\bigl\{\ssf1\!-
\ssb\cos\,\langle\lambda\!-\!\lambda^{\ssf\prime}\ssb,\theta\ssf\rangle\bigr\}\\
&=\ssf2\ssf\sum\nolimits_{\ssf\lambda,\lambda'\ssb\in\Lambda}\ssb
\cosh\ssf\langle\lambda\!+\!\lambda'\ssb,s\ssf\rangle\ssf
\sin^2\ssb\tfrac{\langle\lambda-\lambda^{\ssf\prime}\!,\ssf\theta\ssf\rangle}2\,.
\end{aligned}\end{equation*}
\vspace{-3.5mm}

By taking
\vspace{-0.5mm}
\begin{equation*}
\{\lambda,\lambda'\}=\begin{cases}
\ssf\pm\ssf\{\lambda_1,\lambda_2\}\ssf,\\
\ssf\pm\ssf\{\lambda_1,\lambda_1\!-\!\lambda_2\}\ssf,\\
\end{cases}\end{equation*}
we get the lower bound
\begin{equation*}
h(s)^2\ssb-|\ssf h(s\!+\!i\ssf\theta)|^2
\ge\ssf8\ssf\cosh\ssf\langle\ssf\rho,s\ssf\rangle\ssf
\sin^2\ssb\tfrac{\langle\lambda_1\ssb-\lambda_2,\ssf\theta\ssf\rangle}2
+8\ssf\cosh\ssf\langle\alpha_1,s\ssf\rangle\ssf
\sin^2\ssb\tfrac{\langle\lambda_2,\ssf\theta\ssf\rangle}2\,.
\end{equation*}
As \ssf$\|\lambda_1\!-\!\lambda_2\|^2\!=\ssb\frac23$
\ssf and \ssf$\|\lambda_2\|^2\!=\ssb\frac23$\ssf,
we have
\begin{equation*}
\bigl|\tfrac{\langle\lambda_1-\lambda_2,\ssf\theta\ssf\rangle}2\bigr|\ssb
\le\ssb\tfrac{2\ssf\pi}3
\quad\text{and}\quad
\bigl|\tfrac{\langle\lambda_2,\ssf\theta\ssf\rangle}2\bigr|\ssb
\le\ssb\tfrac{2\ssf\pi}3
\end{equation*}
on \ssf$U$ (see Figure \ref{zeroes}),
hence
\begin{equation*}
\sin^2\ssb\tfrac{\langle\lambda_1\ssb-\lambda_2,\ssf\theta\ssf\rangle}2
\approx\langle\lambda_1\hspace{-.75mm}-\!\lambda_2,\theta\ssf\rangle^2
\quad\text{and}\quad
\sin^2\ssb\tfrac{\langle\lambda_2,\ssf\theta\ssf\rangle}2
\approx\langle\lambda_2,\theta\ssf\rangle^2\ssf.
\end{equation*}
By using \,$h(s)\approx e^{\ssf\langle\ssf\lambda_1,\ssf s\ssf\rangle}$\ssf,
we deduce that
\begin{equation*}
\tfrac{h(s)^2-\ssf|h(s\ssf+\ssf i\ssf\theta)|^2}{2\,h(s)^2}
\gtrsim
e^{-\langle\ssf\lambda_1\ssb-\lambda_2,\ssf s\ssf\rangle}\ssf
\langle\ssf\lambda_1\!-\!\lambda_2,\theta\ssf\rangle^2\ssb
+e^{-\langle\ssf\lambda_2,\ssf s\ssf\rangle}\ssf
\langle\ssf\lambda_2,\theta\ssf\rangle^2\ssf,
\end{equation*}
where we may again replace \ssf$\langle\ssf\lambda_2,\theta\ssf\rangle^2$
\ssf by \ssf$\langle\ssf\lambda_1\!+\ssb\lambda_2,\theta\ssf\rangle^2$\ssf.
We conclude by using Lemma \ref{LocalBehaviorPsi}.(a) and the elementary estimate
\begin{equation*}
-\log\tfrac{|h(s\ssf+\ssf i\ssf\theta)|}{h(s)}
=-\,\tfrac12\log\ssf\bigl\{\ssf1\ssb
-\tfrac{h(s)^2-\ssf|h(s\ssf+\ssf i\ssf\theta)|^2}{h(s)^2}\bigr\}
\ge\tfrac{h(s)^2-\ssf|h(s\ssf+\ssf i\ssf\theta)|^2}{2\,h(s)^2}\,.
\end{equation*}
\end{proof}

\subsection{Amplitudes}
\label{Amplitudes}

In this subsection, we study
the following amplitudes occurring in \eqref{Integrals}\,:
\vspace{-2mm}
\begin{align}
a_1(\theta)&=\smash{
|\ssf\mathbf{c}(i\ssf\theta)|^{-2}\,,
}\vphantom{\Big|}\label{Amplitude1}\\
a_2(\theta)&=\smash{
\tfrac{e^{\ssf i\langle x^+\ssb+\rho,\ssf\theta\ssf\rangle}
\vphantom{\frac00}}
{\boldpi(x^+\ssb+\ssf\rho)
\vphantom{\left[\frac00\right]}}\,
\boldpi\bigl(i\tfrac\partial{\partial\theta}\bigr)\ssf
\tfrac{e^{-i\langle x^+\ssb+\rho,\ssf\theta\ssf\rangle}
\vphantom{\frac00}}
{\mathbf{b}(s\ssf+\ssf i\ssf\theta)
\vphantom{\left[\frac00\right]}}\;,
}\vphantom{\Big|}\label{Amplitude2}\\
a_3(\theta)&=\smash{
\tfrac{h(s\ssf+\ssf i\ssf\theta)\,
e^{\ssf i\langle x^+\ssb+\rho,\ssf\theta\ssf\rangle}
\vphantom{\frac00}}
{\boldpi(x^+\ssb+\ssf\rho)
\vphantom{\left[\frac00\right]}}\,
\boldpi\bigl(i\tfrac\partial{\partial\theta}\bigr)\ssf
\tfrac{e^{-i\langle x^+\ssb+\rho,\ssf\theta\ssf\rangle}
\vphantom{\frac00}}
{\mathbf{b}(s\ssf+\ssf i\ssf\theta)\,
\left[\ssf h(s\ssf+\ssf i\ssf\theta)\ssf
+\ssf2\ssf\frac{n+1}{n+3}\ssf\right]}\;.
}\vphantom{\Big|}\label{Amplitude3}
\end{align}

\begin{lemma}\label{LemmaAmplitudes}
\begin{description}[labelindent=0pt,labelwidth=5mm,labelsep*=1pt,leftmargin=!]
\item[\rm(a)]
The function \eqref{Amplitude1} has the following behavior:
\vspace{-1mm}
\begin{equation*}
a_1(\theta)=\boldpi(\theta)^2\,
\Bigl\{\ssf\bigl(\tfrac q{q-1}\bigr)^6\ssb+\ssf\O\ssf(|\theta|)\ssf\Bigr\}\,.
\end{equation*}
\vspace{-4mm}

\item[\rm(b)]
The function \eqref{Amplitude2} is uniformly bounded,
as well as its derivatives.
Moreover \,$|\ssf a_2(0)|\approx1$
\ssb provided that \,$x$ is large enough.

\item[\rm(c)]
The function \eqref{Amplitude3} is \,$\O\ssf(n^4)$\ssf,
provided that \,$x_1\!-\ssb x_2$ is large enough.
It is actually bounded, as well as its derivatives,
in the following two cases\,{\rm:}
\begin{description}[labelindent=2pt,labelwidth=4mm,labelsep*=1pt,leftmargin=!]
\item[\textbullet]
$|\ssf\theta_1\!-\ssb\theta_2|$ is small enough
and \,$n$ is large enough,
\item[\textbullet]
$\ssf\delta_1\!-\delta_2$ or equivalently \,$s_1\!-\ssb s_2$ stays away from \,$0$
and \,$n$ is large enough.
\end{description}
Moreover \,$|\ssf a_3(0)|\approx1$
provided that \,$x$ and \,$s$ are large enough.
\end{description}
\end{lemma}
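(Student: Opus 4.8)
The plan is to handle \eqref{Amplitude1} by a direct computation and \eqref{Amplitude2}--\eqref{Amplitude3} via a shift identity for the operator $\boldpi\bigl(i\tfrac\partial{\partial\theta}\bigr)$. For part~(a) I would note that, $\theta$ being real, $\overline{\mathbf{c}(i\theta)}=\mathbf{c}(-i\theta)$, so \eqref{cFunction} gives
\[
a_1(\theta)=|\mathbf{c}(i\theta)|^{-2}
=\prod\nolimits_{\ssf\alpha\in R^+}\tfrac{2\,-\,2\cos\langle\alpha,\theta\rangle}
{1\,-\,2\ssf q^{-1}\cos\langle\alpha,\theta\rangle\,+\,q^{-2}}\,.
\]
Each of the three factors attached to a positive root of $A_2$ is real analytic and, near the origin, equals $\langle\alpha,\theta\rangle^2\bigl\{\bigl(\tfrac q{q-1}\bigr)^2+\O(|\theta|)\bigr\}$, because $2-2\cos t=t^2+\O(t^4)$ whereas $1-2\ssf q^{-1}\cos t+q^{-2}=(1-q^{-1})^2+\O(t^2)$. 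Multiplying and factoring out $\boldpi(\theta)^2=\prod_{\alpha\in R^+}\langle\alpha,\theta\rangle^2$ gives (a).

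For parts~(b) and~(c) the main device is the shift identity $\boldpi\bigl(i\tfrac\partial{\partial\theta}\bigr)\bigl[e^{-i\langle\mu,\theta\rangle}g(\theta)\bigr]=e^{-i\langle\mu,\theta\rangle}\,\boldpi\bigl(\mu+i\tfrac\partial{\partial\theta}\bigr)g(\theta)$, with $\mu=x^+\!+\rho$, which holds because the operators $\langle\alpha,\mu\rangle+i\langle\alpha,\tfrac\partial{\partial\theta}\rangle$, $\alpha\in R^+$, commute. Cancelling the exponentials turns \eqref{Amplitude2} and \eqref{Amplitude3} into
\[
a_2=\tfrac1{\boldpi(\mu)}\,\boldpi\bigl(\mu+i\tfrac\partial{\partial\theta}\bigr)\tfrac1{\mathbf{b}(s+i\theta)}\,,\qquad
a_3=\tfrac{h(s+i\theta)}{\boldpi(\mu)}\,\boldpi\bigl(\mu+i\tfrac\partial{\partial\theta}\bigr)\tfrac1{F(\theta)}\,,
\]
where $F(\theta)=\mathbf{b}(s+i\theta)\bigl[h(s+i\theta)+2\tfrac{n+1}{n+3}\bigr]$. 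Expanding $\boldpi(\mu+\xi)/\boldpi(\mu)=\prod_{\alpha\in R^+}\bigl(1+\langle\alpha,\xi\rangle/\langle\alpha,\mu\rangle\bigr)$ as a sum over subsets $S\subseteq R^+$, the $S$-term carries the constant $\prod_{\alpha\in S}\langle\alpha,\mu\rangle^{-1}$ times a constant-coefficient differential operator of order $|S|$ applied to $\tfrac1{\mathbf{b}(s+i\theta)}$, resp. $\tfrac1{F(\theta)}$. Since $x^+\!\in\!P^+$, one has $\langle\alpha,\mu\rangle=\langle\alpha,x^+\rangle+\langle\alpha,\rho\rangle\ge1$ for every $\alpha\in R^+$, so each of those constants is $\le1$ and tends to $0$ as $x^+$ leaves the walls; the $S=\emptyset$ term is the leading one.

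Part~(b) then follows immediately: $\mathbf{b}$ together with all its derivatives is bounded on $\cl{\sector}+i\apartment$ and $|\mathbf{b}|$ is bounded below there, hence $1/\mathbf{b}(s+i\theta)$ and all its $\theta$-derivatives are uniformly bounded; since the constants $\prod_{\alpha\in S}\langle\alpha,\mu\rangle^{-1}\le1$ do not depend on $\theta$, $a_2$ and its derivatives are uniformly bounded. At $\theta=0$ the leading term equals $1/\mathbf{b}(s)\approx1$ and the others are $\O\bigl(\sum_{\alpha\in R^+}\langle\alpha,\mu\rangle^{-1}\bigr)$, which is small once $x$ is far enough from the walls, so $|a_2(0)|\approx1$.

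For part~(c), every $\theta$-derivative of $h(s+i\theta)=\sum_{\lambda\in\Lambda}e^{\langle\lambda,s\rangle}e^{i\langle\lambda,\theta\rangle}$ has modulus $\lesssim h(s)$ ($\Lambda$ being finite), while $\mathbf{b}(s+i\theta)$ and its derivatives are $\approx1$; by Corollary~\ref{NonzeroDenominator}.(a), $|F(\theta)|\gtrsim h(s)/n$ once $x_1-x_2$ is large enough, so the $k$-th derivative of $1/F$ is $\lesssim h(s)\,(h(s)/n)^{-(k+1)}$, and the $S$-term of $a_3$ --- with prefactor $|h(s+i\theta)|\lesssim h(s)$ and constant $\le1$ --- is $\O(n^{|S|+1})$; hence $a_3=\O(n^4)$. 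If instead $|F(\theta)|\approx h(s)$, which by Corollary~\ref{NonzeroDenominator}.(b) is the case when $|\theta_1-\theta_2|$ is small and $n$ large, or when $s_1-s_2$ stays away from $0$ and $n$ large, then the $k$-th derivative of $1/F$ is $\lesssim h(s)^{-1}$, so every term and hence $a_3$ is $\O(1)$, and differentiating $a_3$ in $\theta$ keeps us in the same region while producing only further bounded derivatives of $h,\mathbf{b}$ and $1/F$, so $a_3$ is bounded with its derivatives. Finally, at $\theta=0$, since $h(s)\ge h(0)=6$ and $2\tfrac{n+1}{n+3}<2$, the leading term $h(s)/F(0)=\mathbf{b}(s)^{-1}\,h(s)\big/\bigl(h(s)+2\tfrac{n+1}{n+3}\bigr)\approx1$ dominates the $\O\bigl(\sum_{\alpha\in R^+}\langle\alpha,\mu\rangle^{-1}\bigr)$ correction once $x$ and $s$ are large enough, which gives $|a_3(0)|\approx1$. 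The step I expect to be the main obstacle is precisely this bookkeeping of powers of $n$ in part~(c): it relies on the sharp lower bounds of Corollary~\ref{NonzeroDenominator} for $|h(s+i\theta)+2\tfrac{n+1}{n+3}|$ together with $\deg\boldpi=3$, so that up to three derivatives may fall on $1/F$ and produce the factor $(h(s)/n)^{-4}$ responsible for the exponent $4$ in $\O(n^4)$.
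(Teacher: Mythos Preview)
Your framework is essentially the paper's: part~(a) is a direct computation, and the shift identity for $\boldpi(i\partial_\theta)$ together with the subset expansion over $S\subseteq R^+$ is exactly how one unravels $a_2$ and $a_3$. The boundedness claims and the $\O(n^4)$ bound are handled correctly; your use of Corollary~\ref{NonzeroDenominator} for the denominator $F$ matches the paper's reasoning.

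There is, however, a genuine gap in your argument for the lower bounds $|a_2(0)|\gtrsim1$ and $|a_3(0)|\gtrsim1$. You bound the non-leading terms by $\O\bigl(\sum_{\alpha\in R^+}\langle\alpha,\mu\rangle^{-1}\bigr)$ and assert this is small ``once $x$ is far enough from the walls''. But in the paper's standing convention $x_1\ge x_2$, the hypothesis ``$x$ large'' means only $|x|$ (equivalently $x_1$) large, while $x_2$ may be $0$; this is precisely the situation in which the lemma is invoked in Subsections~\ref{Proof3}--\ref{Proof4}. In that case $\langle\alpha_2,\mu\rangle^{-1}=(x_2+1)^{-1}$ is of order $1$, and your remainder estimate gives no lower bound.

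The paper's remedy is a sign analysis of this single dangerous term. In $a_2(0)$ the $S=\{\alpha_2\}$ contribution equals $\tfrac1{x_2+1}\,\tfrac{\partial_{\alpha_2}\mathbf{b}(s)}{\mathbf{b}(s)^2}$, and a direct computation shows $\partial_{\alpha_2}\mathbf{b}(s)/\mathbf{b}(s)>0$ on $\cl{\sector}$; hence this term is \emph{positive} and may be dropped from the lower bound, after which all remaining corrections are $\O\bigl(\tfrac1{x_1+1}\bigr)$. For $a_3(0)$ the idea is the same but slightly more delicate: the $\{\alpha_2\}$ term now also involves $\partial_{\alpha_2}\log\bigl(h(s)+2\tfrac{n+1}{n+3}\bigr)$, and one splits it into a positive piece plus a piece of size $\O(e^{-s^2})$, which is where the extra hypothesis ``$s$ large'' enters. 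Without this positivity argument the lower bounds are not established in the generality the lemma claims.
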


\begin{proof}
(a) is elementary, as well as the first claim in (b).
The first claims in (c) are deduced similarly from Corollary \ref{NonzeroDenominator}.
Let us turn to the lower estimate
\begin{equation}\label{LowerEstimateB}
|\ssf a_2(0)|\gtrsim1
\end{equation}
in (b) and let us expand for this purpose
the expression \eqref{Amplitude2} at \ssf$\theta\ssb=\ssb0$\ssf.
The main term \,$\mathbf{b}(s)^{-1}\!\approx\!1$
\ssf is obtained by applying
\,$\smash{\boldpi\bigl(i\tfrac\partial{\partial\theta}\bigr)
\big|_{\ssf\theta=0}}$
to \,$\smash{e^{-i\langle x^+\ssb+\rho,\ssf\theta\ssf\rangle}}$.
All other terms are \,$\O\bigl(\frac1{x_1+1}\bigr)$\ssf,
except for
\vspace{-2mm}
\begin{equation*}
\tfrac{1\vphantom{\frac{+0}{+0}}}{x_2+1\vphantom{\frac{+0}{+0}}}\,
\tfrac{\partial_{\hspace{.1mm}\alpha_2}\mathbf{b}(s)\vphantom{\frac{+0}{+0}}}
{\mathbf{b}(s)^2\vphantom{\frac{+0}{+0}}}\,.
\end{equation*}
\vspace{-3mm}

This term, which is obtained by differentiating
\,$\mathbf{b}(s\ssb+\ssb i\ssf\theta)^{-1}$
in the direction of \ssf$\alpha_2$
and \,$e^{-i\langle x^+\ssb+\rho,\ssf\theta\ssf\rangle}$
\ssf in the directions of \ssf$\alpha_1$ and \ssf$\rho$\ssf,
happens to be positive, as
\begin{align*}
\tfrac{\partial_{\alpha_2}\mathbf{b}(s)}{\mathbf{b}(s)\vphantom{\frac00}}
&=-\,\tfrac{q^{-1}e^{-s_1}\vphantom{|}}{1-\ssf q^{-1}e^{-s_1}\vphantom{\frac00}}
+\tfrac{2\,q^{-1}e^{-s_2}\vphantom{|}}{1-\ssf q^{-1}e^{-s_2}\vphantom{\frac00}}
+\tfrac{q^{-1}e^{-s_1\ssb-s_2}\vphantom{|}}
{1-\ssf q^{-1}e^{-s_1\ssb-s_2}\vphantom{\frac00}}\\
&>\tfrac{q^{-1}e^{-s_2}\vphantom{|}}{1-\ssf q^{-1}e^{-s_2}\vphantom{\frac00}}
-\tfrac{q^{-1}e^{-s_1}\vphantom{|}}{1-\ssf q^{-1}e^{-s_1}\vphantom{\frac00}}
\ge0\,.
\end{align*}
Hence
\begin{equation*}
|\ssf a_2(0)|
\ge\tfrac{1\vphantom{\frac{+0}{+0}}}{\mathbf{b}(s)\vphantom{\frac{+0}{+0}}}
+\tfrac{1\vphantom{\frac{+0}{+0}}}{x_2+1\vphantom{\frac{+0}{+0}}}\,
\tfrac{\partial_{\hspace{.1mm}\alpha_2}\mathbf{b}(s)\vphantom{\frac{+0}{+0}}}
{\mathbf{b}(s)^2\vphantom{\frac{+0}{+0}}}
-|\ssf\text{remainder}\ssf|
\ge\tfrac{1\vphantom{\frac{+0}{+0}}}{\mathbf{b}(s)\vphantom{\frac{+0}{+0}}}
-\O\bigl(\tfrac{1\vphantom{\frac00}}{x_1+\ssf1}\bigr)
\end{equation*}
is \,$\ge\ssb\smash{\frac1{2\,\mathbf{b}(s)}}\ssb\approx\ssb1$\ssf,
provided that \ssf$x$ \ssf is large enough.
This concludes the proof of \eqref{LowerEstimateB}.
The lower estimate
\begin{equation}\label{LowerEstimateC}
|\ssf a_3(0)|\gtrsim1
\end{equation}
in (c) is proved similarly.
In the expansion of the expression \eqref{Amplitude3} at \ssf$\theta\ssb=\ssb0$\ssf,
the main term is now
\begin{equation*}
T_1=\tfrac{h(s)\vphantom{\frac{+0}{+0}}}
{\mathbf{b}(s)\,\left[\ssf h(s)\vsf+\ssf2\ssf\frac{n+1}{n+3}\ssf\right]}
\ssf\approx1
\end{equation*}
and all other terms are \,$\O\bigl(\frac1{x_1+1}\bigr)$\ssf,
except for
\begin{equation}\label{BadTermC}
\tfrac{1\vphantom{\frac{+0}{+0}}}{x_2+1\vphantom{\frac{+0}{+0}}}\,
\tfrac{h(s)\vphantom{\frac{+0}{+0}}}
{\mathbf{b}(s)\,\left[\ssf h(s)\vsf+\ssf2\ssf\frac{n+1}{n+3}\ssf\right]}\,
\Bigl\{\tfrac{\partial_{\hspace{.1mm}\alpha_2}\mathbf{b}(s)\vphantom{\frac{+0}{+0}}}
{\mathbf{b}(s)\vphantom{\frac{+0}{+0}}}
+\tfrac{\partial_{\hspace{.1mm}\alpha_2}h(s)\vphantom{\frac{+0}{+0}}}
{h(s)\vsf+\ssf2\ssf\frac{n+1}{n+3}\vphantom{\frac{+0}{+0}}}\Bigr\}\,.
\end{equation}
As
\begin{equation*}
\tfrac{\partial_{\hspace{.1mm}\alpha_2}\left[\ssf h(s)\vsf+\ssf2\ssf\right]\vphantom{\frac00}}
{h(s)\ssf+\ssf2\vphantom{\frac{+0}{+0}}}
=-\tfrac{e^{-s^2}\vphantom{\frac00}}{e^{-s^2}+1}
+\tfrac{e^{s^2-s^1}\vphantom{\frac00}}{e^{s^2-s^1}+1}\,,
\end{equation*}
\eqref{BadTermC} is the difference of the positive expressions
\vspace{1mm}
\begin{equation*}\smash{
T_2=\ssf\tfrac{1\vphantom{\frac{+0}{+0}}}{x_2+1\vphantom{\frac{+0}{+0}}}\,
\tfrac{h(s)\vphantom{\frac{+0}{+0}}}
{\mathbf{b}(s)\,\left[\ssf h(s)\vsf+\ssf2\ssf\frac{n+1}{n+3}\ssf\right]}\,\Bigl\{
\tfrac{\partial_{\hspace{.1mm}\alpha_2}\mathbf{b}(s)\vphantom{\frac{+0}{+0}}}
{\mathbf{b}(s)\vphantom{\frac{+0}{+0}}}
+\tfrac{h(s)\vsf+\ssf2\vphantom{\frac{+0}{+0}}}
{h(s)\vsf+\ssf2\ssf\frac{n+1}{n+3}\vphantom{\left[\frac{+0}{+0}\right]}}\,
\tfrac{e^{s^2-s^1}\vphantom{\frac00}}{e^{s^2-s^1}+1}\Bigr\}
}\end{equation*}
and
\begin{equation*}\smash{
T_3=\tfrac{1\vphantom{\frac{+0}{+0}}}{x_2+1\vphantom{\frac{+0}{+0}}}\,
\tfrac{h(s)\,\left[\ssf h(s)\vsf+\ssf2\ssf\right]\vphantom{\frac{+0}{+0}}}
{\mathbf{b}(s)\,\left[\ssf h(s)\vsf+\ssf2\ssf\frac{n+1}{n+3}\ssf\right]
{\vphantom{\frac00}}^2}\,
\tfrac{e^{-s^2}\vphantom{\frac{+0}{+0}}}{e^{-s^2}+1}
=\ssf\O\bigl(e^{-s^2}\ssf\bigr)\ssf.
}\end{equation*}
Hence
\begin{equation*}
|\ssf a_3(0)|
\ge T_1\hspace{-.4mm}+ T_2\ssb-T_3\ssb-|\ssf\text{remainder}\ssf|
\ge T_1\hspace{-.4mm}-\O\bigl(e^{-s^2}\ssf\bigr)\ssb
-\O\bigl(\tfrac{1\vphantom{\frac00}}{x_1+\ssf1}\bigr)
\end{equation*}
is \,$\ge\ssb\frac{T_1}2\ssb\approx\ssb1$\ssf,
provided that \ssf$x$ \ssf and \ssf$s$ \ssf are large enough.
This concludes the proof of \eqref{LowerEstimateC}.
\end{proof}

\subsection{Proof of Theorem \ref{TheoremHeatEstimateA2}
when \ssf$n$ \ssf remains bounded}
\label{Proof1}

Then \eqref{OptimalBoundA2} reduces to
\begin{equation}\label{HeatEstimateNBounded}
p_{\ssf n}(x)\ssf\approx\ssf1
\qquad\forall\hspace{1.25mm}|x|\ssb\le\ssb n\,.
\end{equation}
While the upper bound in \eqref{HeatEstimateNBounded} is trivial,
the lower bound amounts to the nonvanishing of \ssf$p_{\ssf n}(x)$\ssf.
This follows in turn from the fact, already used in the proof of Corollary \ref{Harnack1},
that the random walk is aperiodic and hence that the random walk may join in \ssf$n$ \ssf steps
any two points at distance $\le n$\ssf,
as soon as \ssf$n\ssb\ge\ssb2$\ssf.
\medskip

\subsection{Proof of Theorem \ref{TheoremHeatEstimateA2}
when \ssf $x$ \ssf remains bounded while \ssf$n$ \ssf is large}
\label{Proof2}

Then \eqref{OptimalBoundA2} amounts to
\begin{equation}\label{HeatEstimateXBounded}
p_{\ssf n}(x)\ssf\approx\,n^{-4}\;\boldsigma^{\vsf n}
\end{equation}
as, in this case, the Gaussian type factor
\begin{equation*}
e^{\ssf n\ssf\phi(\delta)}\approx\ssf e^{\ssf(n+2)\ssf\phi(\delta)}
\end{equation*}
is bounded both from above and from below.
The latter claim follows indeed from the mean value theorem,
applied to the function \ssf$\phi$\ssf,
from the vanishing \ssf$\phi\ssf(0)\!=\ssb0$
\ssf and from the boundedness of \ssf$d\ssf\phi$\ssf,
according to \eqref{dphi}.

Although \eqref{HeatEstimateXBounded} is a consequence
of the general local limit theorem in \cite{P3},
we include a short proof,
which will be refined in the next two subsections.
First of all, according to Corollary \ref{Harnack1},
we can reduce to \ssf$x\ssb=\ssb0$\ssf.
Next, by setting \ssf$x\ssb=\ssb0$ \ssf in \eqref{HeatFormula1},
we see that \eqref{HeatEstimateXBounded} amounts to the estimate
\,$J(n)\ssb\approx\ssb n^{-4}$ \ssf
for the nonnegative expression
\begin{equation}\label{DefinitionJn}
J(n)\ssf=\int_{\ssf U}\,
\bigl[\ssf\tfrac{h(i\ssf\theta)}6\ssf\bigr]^n\,a_1(\theta)\;d\ssf\theta\,.
\end{equation}
Let us collect some information about \eqref{DefinitionJn}.
On the one hand, for \ssf$\theta$ \ssf small,
the phase function \ssf$-\ssf\Psi(\theta)\ssb=\ssb-\log\frac{h(i\ssf\theta)}6$
\ssf and the amplitude \ssf$a_1(\theta)$ \ssf
are nonnegative and behave as follows,
according to Lemma \ref{LocalBehaviorPsi}.(b)
and Lemma \ref{LemmaAmplitudes}.(a)\,:
\begin{equation}\label{LocalBehaviorsJn}
-\ssf\Psi(\theta)\approx|\theta|^2
\quad\text{and}\quad
a_1(\theta)\approx\ssf\boldpi(\theta)^2\,.
\end{equation}
On the other hand, for \ssf$\theta\!\in\!U$,
the following global estimates hold,
according to Lemma \ref{GlobalEstimatePsi}
and Lemma \ref{LemmaAmplitudes}.(a)\,:
\begin{equation}\label{GlobalEstimatesJn}
\tfrac{|h(i\ssf\theta)|}6\lesssim\ssf e^{\ssf-\const|\theta|^2}
\qquad\text{and}\qquad
a_1(\theta)\lesssim\boldpi(\theta)^2\,.
\end{equation}
The upper bound of \eqref{DefinitionJn}
is easily deduced from \eqref{GlobalEstimatesJn}\,:
\begin{equation*}
J(n)\ssf\lesssim
\int_{\ssf U}e^{\ssf-\const n\,|\theta|^2}\,\boldpi(\theta)^2\,d\ssf\theta\,
\lesssim\,n^{-4}\,.
\end{equation*}
In order to prove the lower bound,
let us split up
\begin{equation*}
\int_{\,U}\,
=\,\int_{\,\epsilon\ssf U}
+\,\int_{\ssf U\ssf\smallsetminus\,\epsilon\ssf U}
\end{equation*}
and
\begin{equation*}
J(n)=J_1(n)+J_2(n)
\end{equation*}
accordingly,
where \ssf$\epsilon\!\in\!(0,1)$ is chosen small enough,
so that \eqref{LocalBehaviorsJn} holds for \ssf$\theta\!\in\!\epsilon\ssf U$.
Then it follows from \eqref{LocalBehaviorsJn} and \eqref{GlobalEstimatesJn} that
\begin{equation*}
J_1(n)\approx\ssf n^{-4}
\qquad\text{while}\qquad
J_2(n)\lesssim\ssf e^{\ssf-\const n}\,.
\end{equation*}
In conclusion, \ssf$J(n)\ssb\approx n^{-4}$ provided that \ssf$n$ \ssf is large enough.
\medskip

Assume from now on that \ssf$x$ \ssf and hence \ssf$n$ \ssf are large.

\subsection{Proof of Theorem \ref{TheoremHeatEstimateA2}
when \ssf$\frac{|x|}n$ \ssf stays away from $1\ssf$}
\label{Proof3}

Assume that \ssf$|\delta|\ssb\le\ssb1\!-\ssb\eta$\ssf,
with \ssf$\eta\!\in\!(0,1)$ small.
Then the stationary point \ssf$s$ \ssf considered Subsection \ref{RealPhase}
remains bounded, according to Lemma \ref{PropertiesShiftA2}.e.

To begin with, let us modify the integral expression \eqref{HeatFormula1}.
Firstly, by using \eqref{cFunction} and \eqref{MacdonaldPolynomial}, together
with the $W_0$\ssf-\ssf invariance of \ssf$h$ and \ssf$U$\ssb, we get
\begin{equation*}
p_{\ssf n}(x)
=\tfrac1{4\ssf\pi^2}\,\sigma^{\ssf n}\,q^{-\langle\ssf\rho,\ssf x^+\rangle}\ssb
\int_{\ssf U}\ssb h(i\ssf\theta)^n\,\tfrac
{\Delta(i\ssf\theta)\,e^{-i\langle x^+\ssb+\rho,\ssf\theta\ssf\rangle}}
{\mathbf{b}(i\ssf\theta)}\,d\ssf\theta\,.
\end{equation*}
Secondly, by deforming the contour of integration in \ssf$\aC$ and by using the
\ssf$2\ssf\pi\hspace{.1mm}Q$\hspace{.4mm}-\ssf pe\-ri\-od\-ic\-i\-ty
in \ssf$\theta$, we get
\begin{equation*}
p_{\ssf n}(x)
=\tfrac1{4\ssf\pi^2}\,\sigma^{\ssf n}\,q^{-\langle\ssf\rho,\ssf x^+\rangle}\ssf
e^{-\langle\ssf x^+\ssb+\ssf\rho,\ssf s\ssf\rangle}\ssb
\int_{\ssf U}\ssb h(s\ssb+\ssb i\ssf\theta)^n\,\tfrac
{\Delta(s\ssf+\ssf i\ssf\theta)\,e^{-i\langle x^+\ssb+\rho,\ssf\theta\ssf\rangle}}
{\mathbf{b}(s\ssf+\ssf i\ssf\theta)}\,d\ssf\theta\,.
\end{equation*}
Thirdly, after performing an integration by parts
based on \eqref{DifferentiationFormulaA2}, we get
\begin{equation}\label{HeatFormula2}
p_{\ssf n}(x)=\ssf C(n,x^+\ssb)\,J(n,x^+\ssb)\,,
\end{equation}
\vspace{-5.5mm}

where
\vspace{-.5mm}
\begin{equation}\label{DefinitionCnx}
C(n,x^+\ssb)=\tfrac1{4\ssf\pi^2}\ssf\tfrac{\boldpi(x^+\ssb+\ssf\rho)\,}{(n\ssf+\ssf3)^2\ssf(n\ssf+\ssf2)}\,
\sigma^n\ssf q^{-\langle\ssf\rho,\ssf x^+\rangle}\ssf h(s)^{n+2}\ssf e^{-\langle\ssf x^+\ssb+\ssf\rho,\ssf s\ssf\rangle}
\end{equation}
\vspace{-5mm}

and
\vspace{-.5mm}
\begin{equation}\label{DefinitionJnx}
J(n,x^+\ssb)\,=\int_{\ssf U}\,
\bigl[\,\tfrac{h(s\ssf+\ssf i\ssf\theta)}{h(s)}\,
e^{-\ssf i\ssf\langle\ssf\delta,\ssf\theta\ssf\rangle}\ssf\bigr]^{n+2}\,
a_3(\theta)\,d\ssf\theta\,.
\end{equation}

Let us make two observations about the latter expressions.
On the one hand, as
\begin{equation*}\begin{cases}
\;\boldpi(x^+\hspace{-1mm}+\!\rho)
=(x_1\!+\!1)(x_2\!+\!1)(\ssf|x|\!+\!1)\ssf,\\
\;\boldsigma^{\vsf n}\ssb=6^{\hspace{.1mm}n}\,\sigma^{\ssf n}\ssf,\\
\;e^{\ssf n\ssf\phi(\delta)}\ssb
\approx e^{\ssf(n+2)\ssf\phi(\delta)}\ssb
=e^{\ssf(n+2)\ssf\Phi(s)}\ssb
=6^{-n-2}\,h(s)^{n+2}\,
e^{-\langle\ssf x^+\ssb+\ssf\rho,\ssf s\ssf\rangle}\ssf,
\end{cases}\end{equation*}
we have
\begin{equation}\label{EstimateC}
C(n,x^+\ssb)\approx\ssf
\tfrac{(1\ssf+\ssf|x|\ssf)\ssf(1\ssf+\ssf x_1)\ssf(1\ssf+\ssf x_2)}{n^3}\;
\boldsigma^{\vsf n}\,q^{-\langle\ssf\rho,\ssf x^+\rangle}\ssf
e^{\ssf n\ssf\phi(\delta)}\,.
\end{equation}
Hence \eqref{OptimalBoundA2} amounts to
\begin{equation}\label{FirstEstimateJnx}
J(n,x^+\ssb)\ssf\approx\ssf\tfrac1n
\end{equation}
under the current assumptions.
On the other hand,
\eqref{DefinitionJnx} is meaningful
as long as the denominator
\ssf$h(s\ssb+\ssb i\ssf\theta)\ssb+\ssb2\ssf\smash{\frac{n+1}{n+3}}$
in \eqref{Amplitude3} doesn't vanish,
which may happen when \ssf$x^+$ gets close to the extra wall \eqref{ExtraWall},
according to Corollary \ref{NonzeroDenominator}.(a).
We get around this problem by considering
\vspace{-.5mm}
\begin{equation*}
\widetilde{p}_{\ssf n}(x)=\ssf\widetilde{C}(n,x^+\ssb)\,\widetilde{J}(n,x^+\ssb)
\end{equation*}
\vspace{-5.5mm}

instead of \eqref{HeatFormula2}, where
\begin{align}
\widetilde{p}_{\ssf n}(x)
&=p_{\ssf n}(x)+2\,\tfrac n{n\ssf+\ssf2}\,\sigma\,p_{\ssf n-1}(x)\,,
\label{Definitionptildenx}\\
\widetilde{C}(n\vsf,x^+\ssb)
&=\tfrac{(n\ssf+\ssf3)^2}{(n\ssf+\ssf2)\ssf(n\ssf+\vsf1)}\,C(n\vsf,x^+\ssb)
\approx C(n\vsf,x^+\ssb)\,,
\nonumber\\
\widetilde{J}(n,x^+\ssb)&=\!\int_{\ssf U}\,
\bigl[\,\tfrac{h(s\ssf+\ssf i\ssf\theta)}{h(s)}\,
e^{-\ssf i\ssf\langle\ssf\delta,\ssf\theta\ssf\rangle}\ssf\bigr]^{n+2}\,
a_2(\theta)\,d\ssf\theta\,.
\label{DefinitionJtildenx}
\end{align}
Notice indeed that
\begin{equation}\label{ComparisonHeatKernels}
p_{\ssf n}(x)\le\ssf\widetilde{p}_{\ssf n}(x)\lesssim\ssf p_{\ssf n+2}(x)\ssf.
\end{equation}
Here the first inequality is elementary
while the second one follows from the local Harnack inequality
(see Remark \ref{Harnack3}).
\smallskip

Let us prove the estimate \eqref{FirstEstimateJnx}
for the integral \eqref{DefinitionJtildenx}
and, to this end, let us resume
the analysis carried out for \eqref{DefinitionJn}
in Subsection \ref{Proof2}.
The upper bound of \eqref{DefinitionJtildenx} follows again easily
from Lemma \ref{GlobalEstimatePsi} and Lemma \ref{LemmaAmplitudes}.(b).
More precisely,
the Gaussian estimate
\begin{equation}\label{Gaussian}
\tfrac{|h(s\ssf+\ssf i\ssf\theta)|}{h(s)}\ssf
\lesssim\ssf e^{\ssf-\const B(\theta,\theta)}
\le\ssf e^{\ssf-\const|\theta|^2}
\qquad\forall\;\theta\!\in\!U
\end{equation}
and the uniform boundedness of \ssf$a_2$ yield
\begin{equation*}
\widetilde{J}(n,x^+\ssb)\,\lesssim
\int_{\ssf U}\ssb e^{\ssf-\const n\,|\theta|^2}\,d\ssf\theta\,
\lesssim\,\tfrac1n\,.
\end{equation*}
In order to prove the lower bound,
let us split up this time
\begin{equation}\label{DecompositionJtilde}
\widetilde{J}(n,x^+\ssb)\ssf
=\ssf\sum\nolimits_{\ssf k=1}^{\,4}\widetilde{J}_k(n,x^+\ssb)\,,
\end{equation}
\vspace{-4.5mm}

where
\begin{equation*}\begin{aligned}
\widetilde{J}_1(n,x^+\ssb)\ssf
&=\,a_2(0)\int_{\epsilon\ssf U}\ssb
e^{\ssf(n+2)\Re\Psi(\theta)}\,d\ssf\theta\,,\\
\widetilde{J}_2(n,x^+\ssb)\ssf
&=\,a_2(0)\int_{\epsilon\ssf U}
\bigl\{\ssf e^{\ssf(n+2)\ssf\Psi(\theta)}\!
-\ssb e^{\ssf(n+2)\Re\Psi(\theta)}\bigr\}\,d\ssf\theta\,,\\
\widetilde{J}_3(n,x^+\ssb)\ssf
&=\ssb\int_{\epsilon\ssf U}\ssb e^{\ssf(n+2)\ssf\Psi(\theta)}\,
\bigl\{\ssf a_2(\theta)\ssb-\ssb a_2(0)\bigr\}\,d\ssf\theta\,,\\
\widetilde{J}_4(n,x^+\ssb)\ssf&=\ssb\int_{\ssf U\smallsetminus\ssf\epsilon\ssf U}
\bigl[\,\tfrac{h(s\ssf+\ssf i\ssf\theta)}{h(s)}\,
e^{-\ssf i\ssf\langle\ssf\delta,\ssf\theta\ssf\rangle}\ssf\bigr]^{n+2}\;
a_2(\theta)\,d\ssf\theta\,.
\end{aligned}\end{equation*}
\vspace{-.5mm}

Here \ssf$\epsilon\!\in\!(0,1)$ is chosen small enough,
so that Lemma \ref{LocalBehaviorPsi}.(b) holds
for \ssf$\theta\!\in\!\epsilon\ssf U$\ssb.
The first term in \eqref{DecompositionJtilde}
which yields the main contribution,
is estimated as follows.
On the one hand,
according to Lemma \ref{LemmaAmplitudes}.(b),
we have \,$|\ssf a_2(0)|\ssb\approx\ssb1$\ssf,
provided that \ssf$x$ \ssf is large enough.
On the other hand,
we deduce from Lemma \ref{LocalBehaviorPsi} that
\begin{equation*}
\int_{\epsilon\ssf U}\ssb
e^{\ssf(n+2)\Re\Psi(\theta)}\,d\ssf\theta\,
\approx\int_{\epsilon\ssf U}\ssb
e^{\ssf-\const\ssf(n+2)\ssf B(\theta,\theta)}\,d\ssf\theta\,
\approx\int_{\epsilon\ssf U}\ssb
e^{\ssf-\const n\ssf|\theta|^2}\ssf d\ssf\theta\,
\approx\,\tfrac1n\,.
\end{equation*}
Hence
\vspace{-.5mm}
\begin{equation*}
|\ssf\widetilde{J}_1(n,x^+\ssb)|\ssf\approx\ssf\tfrac1n\,.
\end{equation*}
\vspace{-4.5mm}

As
\begin{equation*}
e^{\,i\ssf(n+2)\Im\Psi(\theta)}\!-\ssb1\ssf=\ssf\O\bigl(n\ssf|\theta|^3\bigr)
\qquad\text{and}\qquad
a_2(\theta)\ssb-\ssb a_2(0)=\ssf\O\ssf(\ssf|\theta|\ssf)\,,
\end{equation*}
the next two terms in \eqref{DecompositionJtilde}
are estimated similarly from above\,:
\begin{equation*}
|\ssf\widetilde{J}_2(n,x^+\ssb)|\ssf\lesssim\ssf n^{-\frac32}
\qquad\text{and}\qquad
|\ssf\widetilde{J}_3(n,x^+\ssb)|\ssf\lesssim\ssf n^{-\frac32}\,.
\end{equation*}
For the last term in \eqref{DecompositionJtilde},
we obtain
\begin{equation*}
|\ssf\widetilde{J}_4(n,x^+\ssb)\ssf|\lesssim e^{\ssf-\const n}
\end{equation*}
by using again the Gaussian estimate \eqref{Gaussian}
and the uniform boundedness of \ssf$a_2$\ssf.
Thus,
\begin{equation*}
\widetilde{J}(n,x^+\ssb)\ssf
\ge\,|\ssf\widetilde{J}_1(n,x^+\ssb)\ssf|
-\sum\nolimits_{\ssf k=2}^{\,4}|\ssf\widetilde{J}_k(n,x^+\ssb)\ssf|\,
\gtrsim\ssf\tfrac1n\,,
\end{equation*}
provided that \ssf$n$ \ssf and \ssf$x$ \ssf are large enough.

In summary, we have obtained the following estimates\,:
\begin{equation}\label{UpperLowerBounds}
\begin{cases}
\;p_{\ssf n}(x)\le\ssf\widetilde{p}_{\ssf n}(x)
\lesssim\tfrac1n\,C(n,x^+\ssb)\,,\\
\;p_{\ssf n}(x)\gtrsim\ssf\widetilde{p}_{\ssf n-2}(x)
\gtrsim\tfrac1{n-2}\,C(n\!-\!2,x^+\ssb)\,
\end{cases}\end{equation}
In order to conclude,
observe that the right hand sides in \eqref{UpperLowerBounds} are comparable
and more precisely the Gaussian type factors entering these expressions.
Setting \,$\widetilde{\delta}=\smash{\frac{x^+\ssb+\ssf\rho}n}$ \ssf and writing
\begin{equation*}
\phi(\delta)-\ssf\phi(\widetilde{\delta}\ssf)
=\,-\int_{\frac1{n+2}}^{\ssf\frac1n}\ssb
\langle\ssf d\phi\bigl(t\ssf(x^+\hspace{-1mm}+\!\rho)\bigr),\ssf x^+\hspace{-1mm}+\!\rho\ssf\rangle\,dt\,,
\end{equation*}
we deduce indeed from \eqref{dphi} that
\,$0\ssb\le\ssb\phi(\delta)\ssb-\phi(\widetilde{\delta}\ssf)\ssb\lesssim\ssb\tfrac1n$\ssf,
hence
\begin{equation}\label{FirstComparisonGaussianTypeFactors}
e^{\ssf(n\vsf+\vsf2\vsf)\ssf\phi(\delta)}\approx\ssf e^{\msf n\msf\phi(\widetilde{\delta}\ssf)}\ssf.
\end{equation}

\subsection{Proof of Theorem \ref{TheoremHeatEstimateA2}
when \ssf$\frac{|x|}n$ \ssf gets close to 1,
while \ssf$n\ssb-\ssb|x|$ \ssf and \ssf$x_1\!-\ssb x_2$ remain large.}
\label{Proof4}

This is the most delicate range to handle.
Assume that \ssf$1\!-\ssb\eta\ssb<\!|\delta|\!<\!1$\ssf,
with \,$\eta\!\in\!(0,1)$ \ssf small.
Then \eqref{OptimalBoundA2} amounts to the estimate
\begin{equation}\label{SecondEstimateJnx}
J(n,x^+\ssb)\approx(\ssf n\ssb-\!|x|\ssf)^{-\frac12}\ssf(\ssf n\ssb-\ssb x_1)^{-\frac12}
\end{equation}
for the integral \eqref{DefinitionJnx}.
\smallskip

The upper bound in \eqref{SecondEstimateJnx} is proved as in Subsection \ref{Proof3}.
Firstly, we deduce from \eqref{ComparisonHeatKernels} that
$J(n,x^+\ssb)\ssb\lesssim\ssb\smash{\widetilde{J}}(n,x^+\ssb)$\ssf.
Secondly, the following global estimate is obtained
by combining Lemma \ref{GlobalEstimatePsi},
Lemma \ref{LocalBehaviorPsi}.(a),
Lemma \ref{PropertiesShiftA2}.(e)
and Lemma \ref{PropertiesShiftA2}.(f)\,:
\begin{equation*}
\bigl|\ssf\tfrac{h(s\ssf+\ssf i\ssf\theta)}{h(s)}\ssf\bigr|^{\ssf n+2}
\le\ssf e^{\ssf-\const\ssf(n+2)\ssf B(\theta,\theta)}
\qquad\forall\;\theta\!\in\!U\ssf,
\end{equation*}
\vspace{-5mm}

with
\begin{equation}\label{BehaviorB}
(n\ssb+\ssb2)\ssf B(\theta,\theta)\ssf\approx\ssf
(\ssf n\ssb-\ssb x_1)\ssf(\theta^1\hspace{-.75mm}-\ssb\theta^2)^2\ssb
+(\ssf n\ssb-\!|x|\ssf)\ssf(\theta^1\hspace{-.75mm}+\ssb\theta^2)^2\,.
\end{equation}
\vspace{-3.5mm}

Thirdly,
recall from Lemma \ref{LemmaAmplitudes}.(b) that
the amplitude \eqref{Amplitude2} is uniformly bounded.
Hence the upper bound
\vspace{-.5mm}
\begin{align*}
J(n,x^+\ssb)\ssf&\lesssim\ssf\widetilde{J}(n,x^+\ssb)\\
&\lesssim\int_{\ssf U}\ssb
e^{\ssf-\const\ssf\left\{(n-|x|\hspace{.1mm})(\theta^1\ssb+\ssf\theta^2)^2
+\ssf(n-x_1)(\theta^1\ssb-\ssf\theta^2)^2\right\}}\,d\ssf\theta\\
&\lesssim\ssf(\ssf n\ssb-\hspace{-.4mm}|x|\ssf)^{-\frac12}\ssf
(\ssf n\ssb-\ssb x_1)^{-\frac12}\,.
\end{align*}

Let us turn to the lower bound in \eqref{SecondEstimateJnx},
which is harder to prove.
Our task consists in analyzing the integral
\vspace{1mm}

\centerline{\hfill$\displaystyle
J(n,x^+\ssb)\,=\int_{\ssf U}\,
\bigl[\,\tfrac{h(s\ssf+\ssf i\ssf\theta)}{h(s)}\,
e^{-\ssf i\ssf\langle\ssf\delta,\ssf\theta\ssf\rangle}\ssf\bigr]^{n+2}\,
a_3(\theta)\,d\ssf\theta\,,
$\hfill\eqref{DefinitionJnx}}

which cannot be replaced anymore by the simpler expression
\ssf$\widetilde{J}(n\hspace{-.5mm}-\hspace{-.5mm}2,x^+\ssb)$\ssf,
as the crucial estimate 
\,$\smash{e^{\ssf(n\vsf+\vsf2\vsf)\ssf\phi(\delta)}}\ssb
\lesssim\smash{e^{\msf n\msf\phi(\widetilde{\delta}\ssf)}}$
\ssf fails to hold in \eqref{FirstComparisonGaussianTypeFactors},
when \ssf$s$ \ssf becomes un\-bound\-ed.
For this purpose,
let us split up
\vspace{-.5mm}
\begin{equation}\label{DecompositionJ}
J(n,x^+\ssb)\ssf=\ssf\sum\nolimits_{\ssf k=1}^{\,5}J_k(n,x^+\ssb)\,,
\end{equation}
\vspace{-4mm}

where (see Figure \ref{decomposition})
\begin{equation*}\begin{aligned}
J_1(n,x^+\ssb)\ssf
&=\,a_3(0)\int_{\epsilon\ssf U}\ssb
e^{\ssf(n+2)\Re\Psi(\theta)}\,d\ssf\theta\,,\\
J_2(n,x^+\ssb)\ssf
&=\,a_3(0)\int_{\epsilon\ssf U}
\bigl\{\ssf e^{\ssf(n+2)\ssf\Psi(\theta)}\!
-\ssb e^{\ssf(n+2)\Re\Psi(\theta)}\bigr\}\,d\ssf\theta\,,\\
J_3(n,x^+\ssb)\ssf
&=\ssb\int_{\epsilon\ssf U}\ssb e^{\ssf(n+2)\ssf\Psi(\theta)}\,
\{\ssf a_3(\theta)\ssb-\ssb a_3(0)\}\,d\ssf\theta\,,\\
J_4(n,x^+\ssb)\ssf
&=\ssb\int_{\ssf U\ssf\cap\,\epsilon\ssf(S\ssf\smallsetminus\ssf U)}
\bigl[\,\tfrac{h(s\ssf+\ssf i\ssf\theta)}{h(s)}\,
e^{-\ssf i\ssf\langle\ssf\delta,\ssf\theta\ssf\rangle}\ssf\bigr]^{n+2}\,
a_3(\theta)\,d\ssf\theta\,,\\
J_5(n,x^+\ssb)\ssf
&=\ssb\int_{\ssf U\smallsetminus\ssf\epsilon\ssf(S\ssf\cup\ssf U)}
\bigl[\,\tfrac{h(s\ssf+\ssf i\ssf\theta)}{h(s)}\,
e^{-\ssf i\ssf\langle\ssf\delta,\ssf\theta\ssf\rangle}\ssf\bigr]^{n+2}\,
a_3(\theta)\,d\ssf\theta\,.
\end{aligned}\end{equation*}

\begin{figure}[ht]
\begin{center}
\psfrag{U}[c]{$U$}
\psfrag{S}[c]{$S$}
\psfrag{epsilonU}[c]{\color{red}$\epsilon\ssf U$}
\psfrag{epsilonS}[c]{\color{red}$\epsilon\ssf S$}
\psfrag{2pilambda1}[l]{$2\ssf\pi\lambda_1$}
\psfrag{2pilambda2}[r]{$2\ssf\pi\lambda_2$}
\psfrag{2pilambda1lambda2}[l]{$2\ssf\pi\ssf(\lambda_1\!-\ssb\lambda_2)$}
\includegraphics[width=69mm]{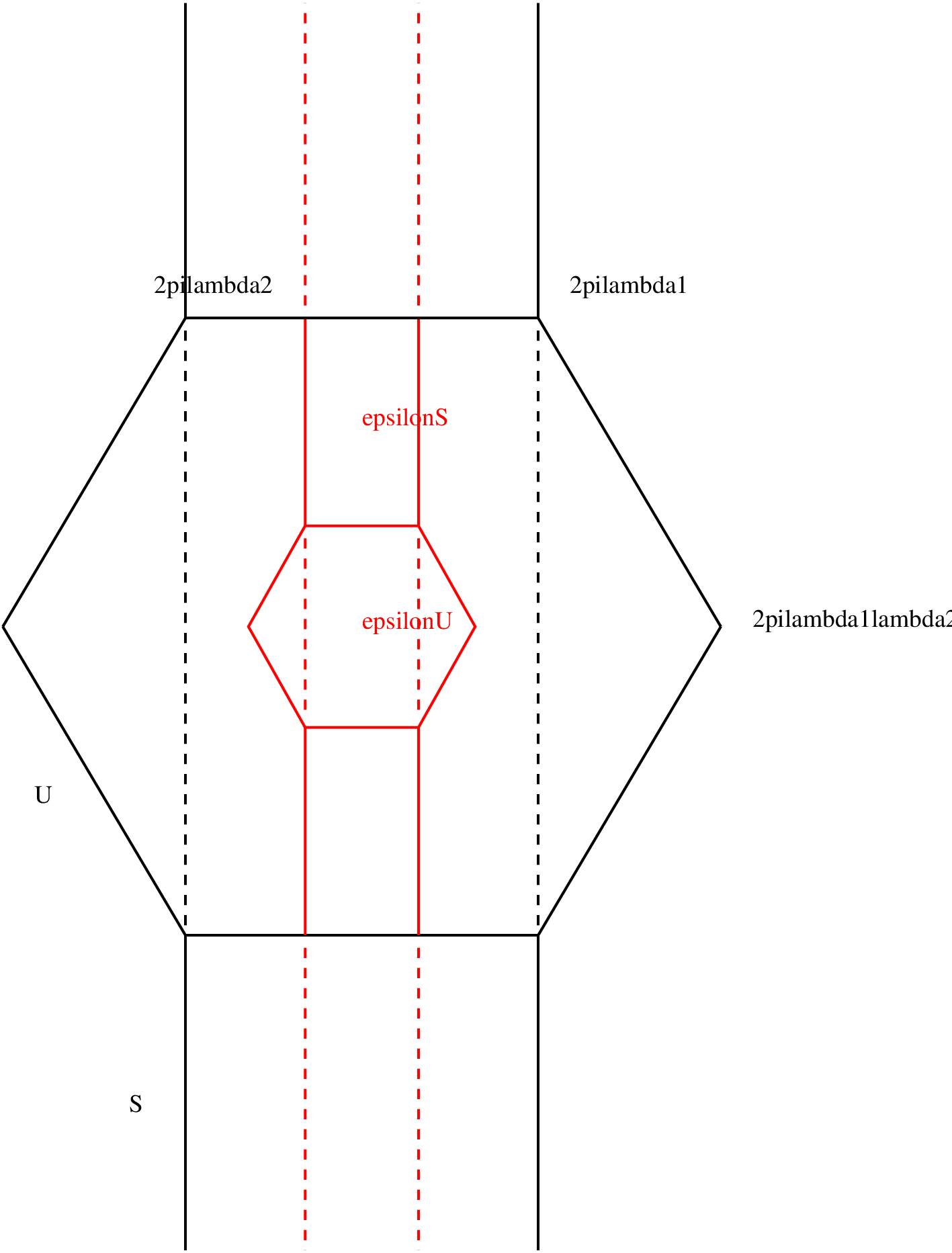}
\end{center}
\caption{Picture for the decomposition \eqref{DecompositionJ}}
\label{decomposition}
\end{figure}

Here, \ssf$S$ \ssf denotes the vertical strip
\vspace{-.5mm}
\begin{equation*}
\{\,\theta\!\in\!\apartment\mid
|\langle\ssf\lambda_1\hspace{-.75mm}-\!\lambda_2,\theta\ssf\rangle|
\!\le\!\tfrac23\ssf\pi\,\}
\end{equation*}
\vspace{-5mm}

and \ssf$\epsilon\!\in\!(0,1)$ is chosen small enough, so that
\begin{description}[labelindent=4pt,labelwidth=4mm,labelsep*=1pt,leftmargin=!]
\item[\textbullet]
Lemma \ref{LocalBehaviorPsi}.(b) holds for \ssf$\theta\!\in\!\epsilon\ssf U$,
\item[\textbullet]
$\ssf a_3(\theta)$ and \ssf$d\ssf a_3(\theta)$
are uniformly bounded for \ssf$\theta\!\in\!\epsilon\ssf(S\ssf\cup\ssf U)$
\ssf and \ssf$n$ \ssf large,
according to Lemma \ref{LemmaAmplitudes}.(c).
\end{description}
We use again \eqref{BehaviorB} to estimate the five integrals \ssf$J_k(n,x^+\ssb)$ occurring in \eqref{DecompositionJ}.
Let us elaborate.
By arguing as for the first three terms in \eqref{DecompositionJtilde}, we obtain
\vspace{-.5mm}
\begin{align*}
|\ssf J_1(n,x^+\ssb)|\ssf&\approx\ssf
(\ssf n\ssb-\ssb x_1)^{-\frac12}\ssf
(\ssf n\ssb-\!|x|\ssf)^{-\frac12}\ssf,\\
|\ssf J_2(n,x^+\ssb)|\ssf&\lesssim\ssf
(\ssf n\ssb-\ssb x_1)^{-\frac12}\ssf
(\ssf n\ssb-\!|x|\ssf)^{-1}\ssf,\\
|\ssf J_3(n,x^+\ssb)|\ssf&\lesssim\ssf
(\ssf n\ssb-\ssb x_1)^{-\frac12}\ssf
(\ssf n\ssb-\!|x|\ssf)^{-1}\ssf,
\end{align*}
provided that \ssf$\eta$ \ssf is small enough
and \ssf$n$ \ssf is large enough,
which we assume from now on.
We obtain similarly
\vspace{-.5mm}
\begin{align*}
|\ssf J_4(n,x^+\ssb)|\ssf\lesssim\ssf
(\ssf n\ssb-\ssb x_1)^{-\frac12}\,
e^{\ssf-\const\ssf(n\ssf-\ssf|x|\hspace{.1mm})}
\end{align*}
\vspace{-4.5mm}

by using the uniform boundedness
of \ssf$a_3(\theta)$ for \ssf$\theta\!\in\!\epsilon\ssf S$
and the estimate
\vspace{-.5mm}
\begin{equation*}
\bigl[\ssf\tfrac{|\ssf h(s\ssf+\ssf i\ssf\theta)\ssf|}{h(s)}\ssf\bigr]^{n+2}
\le\ssf e^{\ssf-\const\ssf(n\ssf-\ssf x_1)(\theta^1\ssb-\ssf\theta^2)^2}
e^{\ssf-\const\ssf(n\ssf-\ssf|x|\hspace{.1mm})}
\qquad\forall\;\theta\ssb\in\ssb
U\ssb\cap\epsilon\ssf(S\ssb\smallsetminus\ssb U\ssf)\,,
\end{equation*}
\vspace{-4mm}

which follows from Lemma \ref{GlobalEstimatePsi} and \eqref{BehaviorB}.
For the last integral, which is most troublesome, we use the estimate
\vspace{-.5mm}
\begin{equation*}
\bigl[\ssf\tfrac{|\ssf h(s\ssf+\ssf i\ssf\theta)\ssf|}{h(s)}\ssf\bigr]^{n+2}
\le\ssf e^{-\const\ssf(n-x_1)}
\qquad\forall\;\theta\ssb\in\ssb U\hspace{-.5mm}\smallsetminus\ssb\epsilon\ssf S\,,
\end{equation*}
\vspace{-4mm}

which follows again from Lemma \ref{GlobalEstimatePsi} and \eqref{BehaviorB},
together with the estimates of \ssf$a_3(\theta)$
contained in Lemma \ref{LemmaAmplitudes}.(c).
On the one hand,
if \ssf$\delta_1\!-\ssb\delta_2$ stays away from \ssf$0$\ssf,
say \ssf$\delta_1\!-\ssb\delta_2\ssb\ge\ssb\frac14$\ssf,
then \ssf$a_3(\theta)$ is uniformly bounded,
hence
\vspace{-1mm}
\begin{equation*}
|\ssf J_5(n,x^+\ssb)|\ssf\lesssim\ssf e^{\ssf-\const\ssf(n\ssf-\ssf x_1)}\ssf.
\end{equation*}
\vspace{-4.5mm}

On the other hand,
if \ssf$\delta_1\!-\ssb\delta_2\hspace{-.5mm}<\hspace{-.5mm}\frac14$\ssf,
then
\vspace{-1mm}
\begin{equation*}
x_1=\tfrac{x_1+\ssf x_2\vphantom{\frac00}}{2\vphantom{\frac00}}\ssb
+\ssb\tfrac{x_1-\ssf x_2\vphantom{\frac00}}{2\vphantom{\frac00}}
<\tfrac{n\ssf-\hspace{.1mm}1\vphantom{\frac00}}{2\vphantom{\frac00}}\ssb
+\ssb\tfrac{n\ssf+\ssf2\vphantom{\frac00}}{4\vphantom{\frac00}}
=\tfrac34\,n\,,
\end{equation*}
\vspace{-4.5mm}

hence \ssf$n\ssb-\ssb x_1\hspace{-.5mm}\approx\ssb n$\ssf.
As \,$a_3(\theta)\ssb=\ssb\O\ssf(n^4)$\ssf,
under the additionnal assumption that \ssf$x_1\!-\ssb x_2$ is large enough,
we obtain again
\vspace{-.75mm}
\begin{equation*}
|\ssf J_5(n,x^+\ssb)|\ssf\lesssim\ssf e^{\ssf-\const\ssf(n\ssf-\ssf x_1)}\ssf.
\end{equation*}
\vspace{-4.5mm}

In conclusion, we obtain the expected bound
\vspace{-1mm}
\begin{equation*}
J(n,x^+\ssb)\ssf
\ge\ssf|\ssf J_1(n,x^+\ssb)|
-\sum\nolimits_{\ssf k=2}^{\,5}|\ssf J_k(n,x^+\ssb)|\ssf
\gtrsim\ssf(\ssf n\ssb-\ssb x_1)^{-\frac12}\ssf
(\ssf n\ssb-\!|x|\ssf)^{-\frac12}\ssf,
\end{equation*}
\vspace{-4mm}

provided that \ssf$\eta$ \ssf is small enough
and that \ssf$n$\ssf, \ssf$n\!-\!|x|$\ssf, \ssf$x_1\!-\ssb x_2$
\ssf are all large enough.

\subsection{Completion of the proof of Theorem \ref{TheoremHeatEstimateA2}
when \ssf$\frac{|x|}n$ \ssf gets close to 1,
while \ssf$n\ssb-\ssb|x|$ \ssf remains large.}
\label{Proof5}

In this subsection, we extend up to the extra wall \eqref{ExtraWall}
the lower bound proved in Subsection \ref{Proof4}.
Specifically, assume that
the lower bound in \eqref{OptimalBoundA2} holds in the range

\centerline{$\begin{cases}
\;x_1\ssb-x_2\ge m\,,\\
\;n-|x|\ge m\,,
\end{cases}$}
\vspace{.5mm}

for some fixed \ssf$m\!\in\!\N^*$,
and let us deduce it under the following conditions\,:
\vspace{1mm}

\centerline{$\begin{cases}
\;0\le x_1\ssb-x_2\ssb<m\,,\\
\;n-|x|\ge m\,,\\
\;1\!-\ssb\eta<\frac{|x|\vphantom{\frac00}}n<1\ssf,
\text{ \ssf with \,$0\ssb<\ssb\eta\ssb<\ssb1$ \,small enough\ssf,}\\
\;\text{$n$ \,or equivalently \,$|x|$ \,is large enough\ssf.}
\end{cases}$}
\vspace{.5mm}

In this case, \ssf$x_1$ and \ssf$x_2$
\hspace{.1mm}are close to \ssf$\frac{|x|}2$\ssf.
Thus the lower estimate in \eqref{OptimalBoundA2},
which we aim for, amounts to
\vspace{-1mm}
\begin{equation}\label{LowerEstimateExtraWall}
p_{\ssf n}(x)\gtrsim\ssf
n^{-\frac12}\ssf(\ssf n\ssb-\!|x|\ssf)^{-\frac12}\;
\boldsigma^{\vsf n}\,
q^{-\langle\ssf\rho,\ssf x^+\rangle}\ssf
e^{\ssf n\ssf\phi(\delta)}\ssf.
\end{equation}
Consider \,$\widetilde{n}\ssb=\ssb n\ssb-\ssb m$ \,and
\,$\widetilde{x}\ssb=\ssb x^+\hspace{-.8mm}-\ssb m\ssf\lambda_2$\ssf.
Then
\vspace{1mm}

\centerline{$
p_{\ssf n}(x)\gtrsim\ssf p_{\ssf\widetilde{n}}(\widetilde{x})\ssf,
$}\vspace{1mm}

according to the local Harnack inequality \eqref{Harnack2}.
Besides \ssf$\widetilde{x}\!\in\!\sector\hspace{-.4mm}$
provided that \ssf$|x|$ \ssf is large enough.
Moreover

\centerline{$\begin{cases}
\;\widetilde{x}_1\ssb-\widetilde{x}_2=\ssf x_1\ssb-x_2+m\ge m\,,\\
\;\widetilde{n}-|\widetilde{x}|=\ssf n-|x|\ge m\,.\\
\end{cases}$}
\vspace{1mm}

Thus \eqref{LowerEstimateExtraWall} holds for
\ssf$p_{\ssf\widetilde{n}}(\widetilde{x})$ by assumption,
hence
\vspace{-1mm}
\begin{equation*}
p_{\ssf\widetilde{n}}(\widetilde{x})\gtrsim\ssf
n^{-\frac12}\ssf(\ssf n\ssb-\!|x|\ssf)^{-\frac12}\;
\boldsigma^{\vsf n}\,
q^{-\langle\ssf\rho,\ssf x^+\rangle}\ssf
e^{\ssf\widetilde{n}\ssf\phi(\widetilde{\delta})}\ssf,
\end{equation*}
\vspace{-4.5mm}

where \,$\widetilde{\delta}\ssb
=\ssb\frac{\widetilde{x}\ssf+\ssf\rho\vphantom{|}}
{\widetilde{n}\ssf+\ssf2\vphantom{|}}$\ssf.
In order to conclude the proof of \eqref{LowerEstimateExtraWall},
it remains for us to compare
\vspace{-2.5mm}
\begin{equation*}\label{SecondComparisonGaussianTypeFactors}
e^{\ssf\widetilde{n}\ssf\phi(\widetilde{\delta})}
\gtrsim\ssf e^{\ssf n\ssf\phi(\delta)}\,.
\end{equation*}
\vspace{-4mm}

Firstly,
\,$\widetilde{n}\,\phi(\widetilde{\delta})\ssb
\ge n\,\phi(\widetilde{\delta})$\ssf,
as \ssf$\phi\ssb\le\ssb0$\ssf.
Secondly, we claim that
\,$\phi(\widetilde{\delta}\ssf)\hspace{-.5mm}\ge\ssb\phi(\delta)$\ssf,
\ssf if \ssf$\eta$ \ssf is small enough and \ssf$n$ \ssf large enough.
For this purpose, let us write
\vspace{-.5mm}
\begin{equation}\label{AuxiliaryExpression1}
\phi(\widetilde{\delta}\ssf)-\ssf\phi(\delta)\ssf
=\ssb\int_{\,0}^{\ssf1}\ssb\langle\ssf
(d\ssf\phi\ssb\circ\ssb\delta)(t)\ssf,
\ssf\widetilde{\delta}\ssb-\ssb\delta\,\rangle\,dt\,,
\end{equation}
\vspace{-2.5mm}

where \,$\delta(t)\ssb
=\ssb(1\hspace{-1mm}-\!t)\ssf\delta\ssb+\ssb t\ssf\widetilde{\delta}$\ssf.
In this expression,
\ssf$(d\ssf\phi\ssb\circ\ssb\delta)(t)\ssb=\ssb-\,s(t)$\ssf,
according to \eqref{dphi}, and
\begin{equation*}
\widetilde{\delta}\ssb-\ssb\delta
=\tfrac{m\vphantom{|}}
{(n\ssf+\ssf2)\ssf(n\ssf+\ssf2\ssf-\ssf m)\vphantom{|}}\,
\bigl\{\ssf(\hspace{.1mm}x_1\!+\hspace{-.5mm}1)\,\lambda_1\ssb
+(\hspace{.1mm}x_2\!-\ssb n\ssb-\!1)\,\lambda_2\ssf\bigr\}\,.
\end{equation*}
\vspace{-4.5mm}

Hence,
\begin{equation}\label{AuxiliaryExpression2}
\langle\ssf(d\ssf\phi\ssb\circ\ssb\delta)(t),
\ssf\widetilde{\delta}\ssb-\ssb\delta\,\rangle
=\tfrac{m\vphantom{|}}
{(n\ssf+\ssf2)\ssf(n\ssf+\ssf2\ssf-\ssf m)\vphantom{|}}\,
\bigl\{\ssf(\hspace{.1mm}n\hspace{-.4mm}-\hspace{-.5mm}|x|)\,s^2(t)\ssb
-\ssb(\hspace{.1mm}x_1\!+\!1)\,
[\ssf s^1\hspace{-.1mm}(t)\hspace{-.5mm}-\hspace{-.5mm}s^2(t)\ssf]\ssf\bigr\}\,.
\end{equation}
On the one hand,
\begin{equation*}
\delta_1(t)\ssb-\delta_2(t)
=(1\!-\ssb t\ssf)\,\tfrac{x_1-\ssf x_2\vphantom{|}}{n\ssf+\ssf2\vphantom{|}}
+t\,\tfrac{x_1-\ssf x_2\hspace{.1mm}+\ssf m\vphantom{|}}
{n\ssf+\ssf2\ssf-\ssf m\vphantom{|}}
=\ssf\O\bigl(\tfrac{1\vphantom{|}}n\bigr)\,.
\end{equation*}
Thus, if \ssf$n$ \ssf is large enough,
\begin{equation}\label{AuxiliaryExpression3}
(\hspace{.1mm}x_1\!+\!1)\,
[\ssf s^1\hspace{-.1mm}(t)\hspace{-.5mm}-\hspace{-.5mm}s^2(t)\ssf]
=\ssf\O\ssf(1)\,,
\end{equation}
according to Lemma \ref{PropertiesShiftA2}.(g).
On the other hand,
\vspace{-.5mm}
\begin{equation*}
1-|\ssf\delta(t)|=(1\!-\ssb t\ssf)\,
\tfrac{n\ssf-\ssf|x|\vphantom{|}}{n\ssf+\ssf2}
+t\,\tfrac{n\ssf-\ssf|x|\vphantom{|}}{n\ssf+\ssf2\ssf-\ssf m}
\le\tfrac{n\ssf-\ssf|x|\vphantom{|}}{n\ssf+\ssf2\ssf-\ssf m}
=\left(1\!-\ssb\smash{\tfrac{m-2\vphantom{|}}n}\vphantom{\tfrac oo}\right)^{-1}
\left(1\!-\ssb\smash{\tfrac{|x|\vphantom{|}}n}\vphantom{\tfrac oo}\right)
\end{equation*}
is smaller than \ssf$2\ssf\eta$\ssf,
if \ssf$n$ \ssf is large enough,
and
\vspace{-.5mm}
\begin{equation*}
e^{\ssf s^2\hspace{-.1mm}(t)}\approx\ssf
\tfrac{1\vphantom{|}}{1\ssf-\ssf|\delta(t)|\vphantom{\frac00}}\,,
\end{equation*}
\vspace{-4.5mm}

according to Lemma \ref{PropertiesShiftA2}.(e).
Thus,
\begin{equation*}
(\ssf n\hspace{-.4mm}-\hspace{-.5mm}|x|\ssf)\,s^2(t)
\ge m\hspace{.5mm}\bigl[\ssf-\log\ssf(2\ssf\eta)\vsb-\const\ssf\bigr]
\end{equation*}
is positive and even larger than \eqref{AuxiliaryExpression3},
if \ssf$\eta$ \ssf is small enough.
In conclusion, \eqref{AuxiliaryExpression2} is positive
and \eqref{AuxiliaryExpression1} too,
provided that \ssf$\eta$ \ssf is small enough
and \ssf$n$ \ssf large enough.

\subsection{Proof of Theorem \ref{TheoremHeatEstimateA2}
close to the boundary \,$|x|\!=\ssb n\,$}
\label{Proof6}

In this subsection,
we give a combinatorial proof of Theorem \ref{TheoremHeatEstimateA2}
in the range \ssf$n\ssb-\ssb m\ssb\le\ssb|x|\ssb\le\ssb n$\ssf,
where \ssf$m$ \ssf is any fixed positive integer and \ssf$n$ \ssf is large.
Still assuming  that \ssf$x_1\!\ge\ssb x_2$\ssf,
let us first show that \eqref{OptimalBoundA2} amounts then to
\vspace{-.5mm}
\begin{equation}\label{HeatEstimateA2Boundary}
p_{\ssf n}(x)\approx\sigma^{\ssf n}\,q^{-n}\,n^{n+d}\,
x_1^{-\ssf x_1}\,(\ssf x_2\!+\!1)^{-\ssf x_2-\ssf d\ssf-\frac12}\ssf,
\end{equation}
where \ssf$d\ssb=\ssb n\ssb-\ssb|x|$\ssf.
Firstly,
\begin{equation*}
\tfrac{(1\ssf+\ssf|x|\hspace{.1mm})\ssf
(1\ssf+\ssf x_1)\ssf(1\ssf+\ssf x_2)\vphantom{\big|}}
{n^{3\vphantom{\frac oo}}\ssf\sqrt{\ssf n\ssf-\ssf|x|\,}\ssf
\sqrt{\ssf n\ssf-\ssf x_1\ssf\vphantom{|}}}\ssf
\approx\tfrac{\sqrt{x_2+1\vphantom{|}}\vphantom{\big|}}{n\vphantom{\sqrt{|}}}
\end{equation*}
as \ssf$x_1\!\approx\ssb|x|\ssb\approx\ssb n$
\ssf and \ssf$n\ssb-\ssb x_1\!=x_2\!+\ssb d\approx x_2\!+\!1$\ssf.
Secondly,
\begin{equation*}
\widetilde\sigma^{\ssf n}\ssb=6^{\ssf n}\ssf\sigma^{\ssf n}
\qquad\text{and}\qquad
q^{-\langle\rho,\ssf x^{\ssb+\ssb}\rangle}\ssb=q^{-|x|}\ssb\approx q^{-n}\,.
\end{equation*}
Thirdly,
\begin{equation*}
e^{\ssf n\ssf\phi(\delta)}\ssb
\approx e^{\ssf(n+2)\ssf\phi(\delta)}\ssb
\approx 6^{-n}\,h(s)^{n+2}\,
e^{-\langle\ssf x^+\ssb+\ssf\rho,\ssf s\ssf\rangle}\,,
\end{equation*}
with \,$h(s)\ssb=e^{\ssf s^1}
\bigl\{\ssf1\ssb+e^{-(s^1\!-s^2)}\!
+\O\bigl(e^{-s^2}\bigr)\ssf\bigr\}$\ssf.
According to Lemma \ref{PropertiesShiftA2},
\begin{equation*}
h(s)=\ssf e^{\ssf s^1}
\bigl\{\ssf\tfrac1{\delta_1}\ssb
+\O\ssf(\ssf1\!-\ssb|\delta|\ssf)\ssf\bigr\}
=\ssf\tfrac{e^{\ssf s^1}}{\delta_1}\ssf
\bigl\{\ssf1\ssb+\O\ssf(\tfrac1n)\ssf\bigr\}\,,
\end{equation*}
hence
\begin{equation*}
h(s)^{n+2}\,e^{-\langle\ssf x^+\ssb+\ssf\rho,\ssf s\ssf\rangle}
\approx\ssf n^{n+1}\,x_1^{-n-1}\,e^{\ssf(x_2+\ssf d\ssf+1)\ssf(s^1\ssb-s^2)}\,
e^{\ssf d\ssf s^2}\ssf,
\end{equation*}
with \,$e^{\ssf d\vsf s^2}\!\approx\ssb(1\!-\!|\delta|)^{-d}\ssb
\approx n^{\ssf d}$\ssf.
On the one hand, if \ssf$x_2$ is large enough,
\begin{align*}
e^{\ssf(x_2+\ssf d\ssf+\vsf1)\ssf(s^1\ssb-s^2)}
&\approx\,
\bigl\{\ssf\tfrac{1\ssf-\ssf\delta_1}{\delta_1}
+\O\ssf(\ssf1\!-\ssb|\delta|\ssf)\ssf\bigr\}^{-x_2-d-1}\\
&\approx\,
\bigl(\tfrac{1\ssf-\ssf\delta_1}{\delta_1}\bigr)^{-x_2-d-1}\,
\bigl\{\ssf1\ssb+\O\bigl(\tfrac1{x_2\vsf+\ssf d\ssf+\vsf1}\bigr)\bigr\}^{-x_2-d-1}\\
&\approx\,
\bigl(\tfrac{x_1+\vsf1}{x_2\vsf+\ssf d\ssf+\vsf1}\bigr)^{x_2+d+1}\ssf
\approx\,\bigl(\tfrac{x_1}{x_2\vsf+\vsf1}\bigr)^{x_2+d+1}\,.
\end{align*}
On the other hand, as long as  \ssf$x_2$ is bounded,
\begin{equation*}
e^{\ssf(x_2+\ssf d\ssf+1)\ssf(s^1\ssb-s^2)}
\approx\bigl(\tfrac{1\ssf-\ssf\delta_1}{\delta_1}\bigr)^{-x_2-d-1}
=\bigl(\tfrac{x_1+1}{x_2+\ssf d\ssf+\ssf1}\bigr)^{x_2+d+1}
\approx\bigl(\tfrac{x_1}{x_2+1}\bigr)^{x_2+d+1}\,.
\end{equation*}
\vspace{-4.5mm}

Thus
\vspace{-.5mm}
\begin{equation*}
h(s)^{n+2}\,
e^{-\langle\ssf x^+\ssb+\ssf\rho,\ssf s\ssf\rangle}
\approx\ssf n^{\ssf n+d+1}\,
x_1^{-x_1}\,(\ssf x_2\!+\!1)^{-x_2-d-1}
\end{equation*}
in all cases and the right hand side of \eqref{OptimalBoundA2}
is comparable to \eqref{HeatEstimateA2Boundary}, as claimed.
\smallskip

Let us next turn to the proof of \eqref{HeatEstimateA2Boundary}.
Instead of the simple random walk in $\X$,
it is more convenient to work with the corresponding radial random walk in $P^+$,
whose transition probability is given by the following table
where, let us recall,
\ssf$\sigma\ssb=\ssb\frac1{2\ssf(q\ssf+1+\ssf q^{-1})}$\ssf.
\medskip

\begin{center}\begin{tabular}{|l|l|}\hline
\;$\lambda\!\in\!P^{++}$
&\begin{tabular}{l}
$p^+(\lambda,\lambda\ssb+\!\lambda_1)
=p^+(\lambda,\lambda\ssb+\!\lambda_2)
=\sigma\ssf q
\vphantom{\overbrace{o}}$\\
$p^+(\lambda,\lambda\ssb+\!\lambda_1\hspace{-.75mm}-\!\lambda_2)
=p^+(\lambda,\lambda\ssb-\!\lambda_1\hspace{-.75mm}+\!\lambda_2)
=\sigma$\\
$p^+(\lambda,\lambda\ssb-\!\lambda_1)
=p^+(\lambda,\lambda\ssb-\!\lambda_2)
=\sigma\ssf q^{-1}
\vphantom{\underbrace{o}}$\\
\end{tabular}\\\hline
\;$\lambda\!\in\!\N^*\ssb\lambda_1$
&\begin{tabular}{l}
$p^+(\lambda,\lambda\ssb+\!\lambda_2)=\sigma\ssf(q\ssb+\!1)
\vphantom{\overbrace{o}}$\\
$p^+(\lambda,\lambda\ssb+\!\lambda_1)=\sigma\ssf q$\\
$p^+(\lambda,\lambda\ssb-\!\lambda_1\hspace{-.75mm}+\!\lambda_2)
=\sigma\ssf(1\!+\ssb q^{-1})$\\
$p^+(\lambda,\lambda\ssb-\!\lambda_1)=\sigma\ssf q^{-1}
\vphantom{\underbrace{o}}$\\
\end{tabular}\\\hline
\;$\lambda\!\in\!\N^*\ssb\lambda_2$\;
&\begin{tabular}{l}
$p^+(\lambda,\lambda\ssb+\!\lambda_1)=\sigma\ssf(q\ssb+\!1)
\vphantom{\overbrace{o}}$\\
$p^+(\lambda,\lambda\ssb+\!\lambda_2)=\sigma\ssf q$\\
$p^+(\lambda,\lambda\ssb+\!\lambda_1\hspace{-.75mm}-\!\lambda_2)
=\sigma\ssf(1\!+\ssb q^{-1})$\\
$p^+(\lambda,\lambda\ssb-\!\lambda_2)=\sigma\ssf q^{-1}
\vphantom{\underbrace{o}}$\\
\end{tabular}\\\hline
\;$\lambda\ssb=\ssb0$
&\begin{tabular}{l}
$p^+(\lambda,\lambda\ssb+\!\lambda_1)
=p^+(\lambda,\lambda\ssb+\!\lambda_2)
=\frac12\vphantom{\overbrace{o}}\vphantom{\underbrace{o}}$
\end{tabular}\\\hline
\end{tabular}\end{center}
\medskip

We claim that,
in the range \,$n\ssb-\ssb m\ssb\le\ssb|x|\ssb\le\ssb n$\ssf,
\eqref{HeatEstimateA2Boundary} amounts to showing that
\begin{equation}\label{NumberPaths}
M\approx\ssf n^{\ssf n+d}\,x_1^{-x_1}\ssf(\ssf x_2\!+\!1)^{-x_2-d-\frac12}\ssf,
\end{equation}
where \ssf$M$ denotes the number of paths in \ssf$P^+$
between \ssf$0$ \ssf and \ssf$\lambda\ssb=\ssb x^+$\ssb.
This claim is obtained by combining the following two facts.
On the one hand,
\,$p_{\ssf n}^+(0,\lambda)\ssb
=\ssb N_{\ssb\lambda}\,p_{\ssf n}(\lambda)$\ssf,
where \ssf$N_{\ssb\lambda}\ssb
\approx q_{\ssf t_\lambda}\hspace{-.75mm}
=q^{\ssf2\ssf\langle\rho,\ssf\lambda\rangle}\ssb
=q^{\ssf2\ssf|\lambda|}\ssb
\approx q^{\ssf2\ssf n}\ssf$.
On the other hand, according to the table above,
the transition probability of the radial random walk
equals \ssf$\sigma\ssf q$ \ssf or \ssf$\sigma\ssf(q\ssb+\!1)$
at each step, except for finitely many.
\smallskip

Let us turn to the proof of \eqref{NumberPaths} and,
for this purpose, consider the sequence of increments
of the radial random walk up to time \ssf$n$\,:
\begin{equation*}
\epsilon=(\epsilon_1,\dots,\epsilon_n)
\quad\text{with}\quad
\epsilon_j\!\in\!\{\pm\lambda_1,\pm\lambda_2\ssf,
\pm\lambda_1\hspace{-1mm}\mp\hspace{-.75mm}\lambda_2\}\,.
\end{equation*}
On the one hand, choose \ssf$x_1\!-\ssb d$ \ssf times \ssf$\lambda_1$,
\ssf$x_2\!+\ssb d$ \ssf times \ssf$\lambda_2$,
and \ssf$d$ \ssf times \ssf$\lambda_1\!-\ssb\lambda_2$,
the latter occurring after at least \ssf$d$ \ssf increments \ssf$\lambda_2$,
in order to remain within \ssf$P^+$\ssb.
The number of such choices equals
\vspace{-.5mm}
\begin{equation*}
\tfrac{n\ssf!\vphantom{|}}
{(x_1-\ssf d\ssf)\ssf!\,(x_2\ssf+\ssf2\ssf d\ssf)\ssf!}\ssf
\tfrac{(x_2\ssf+\ssf d\ssf)\ssf!}{x_2\ssf!\,d\,!}\,,
\end{equation*}
\vspace{-5mm}

which is comparable to
\vspace{-.5mm}
\begin{equation*}
n_{\vphantom{0}}^{\ssf n+\frac12}\,
x_1^{-x_1+d-\frac12}\,
(x_2\!+\!1)_{\vphantom{0}}^{-x_2-d-\frac12}\,,
\end{equation*}
according to Stirling's formula, hence to
\ssf$n_{\vphantom{0}}^{n+d}\ssf
x_1^{-x_1}(x_2\!+\!1)^{-x_2-d-\frac12}$\ssf.
This proves the lower bound.
On the other hand, in order to reach
\ssf$\lambda\ssb=\ssb x_1\lambda_1\!+\ssb x_2\ssf\lambda_2$
in \ssf$n$ \ssf steps,
one needs \ssf$k$ \ssf in\-cre\-ments \ssf$\lambda_1$,
\ssf$\ell$ \ssf increments \ssf$\lambda_2$
and \ssf$n\ssb-\ssb k\ssb-\ssb\ell$ \ssf increments in
\ssf$\{-\lambda_1,-\lambda_2,\pm\ssf\lambda_1\!\mp\ssb\lambda_2\}\ssf$.
Notice that \ssf$|x|\!\le\ssb k\ssb+\ssb\ell\ssb\le\ssb n$
\ssf and \ssf$|\ssf k\ssb-\ssb x_1|\ssb\le\ssb d$\ssf,
\ssf$|\ssf\ell\ssb-\ssb x_2|\ssb\le\ssb d$\ssf.
Thus
\vspace{-.5mm}
\begin{equation}\label{UpperBound}
M\hspace{1mm}\le\hspace{3mm}
\sum\nolimits_{\hspace{-9mm}\substack{\vphantom{o}\\
k,\ssf\ell\ssf\in\ssf\N\\
|\ssf k-x_1|\ssf\le\ssf d\ssf,\,|\ssf\ell-x_2|\ssf\le\ssf d}}
\hspace{-1mm}
\tfrac{n\ssf!}{k\ssf!\,\ell\ssf!}\,.
\end{equation}
According to Stirling's formula,
each term \ssf$\smash{\frac{n\ssf!}{k\ssf!\,\ell\ssf!}}$
\ssf on the right hand side of \eqref{UpperBound} is comparable to
\,$\smash{n^{\ssf n+\frac12}\,k^{-k-\frac12}\ssf(\ell\ssb+\!1)^{-\ell-\frac12}}$
\ssf hence to
\,$\smash{n^{\ssf n}\,x_1^{-k}\ssf(x_2\ssb+\!1)^{-\ell-\frac12}}$.
The upper bound
\vspace{-.5mm}
\begin{equation*}
M\lesssim n^{\ssf n+d}\,x_1^{-x_1}\ssf(x_2\ssb+\!1)^{-x_2-\ssf d\ssf-\frac12}
\end{equation*}
follows from the fact that the sum in \eqref{UpperBound} is finite and
\vspace{-1mm}
\begin{equation*}
x_1^{\ssf x_1\ssb-k}\ssf(x_2\!+\!1)^{\ssf x_2+d-\ell}
\lesssim\ssf n^{\ssf n-k-\ell}\le\ssf n^{\ssf d}\,.
\end{equation*}
\vspace{-3mm}

This concludes the proof of Theorem \ref{TheoremHeatEstimateA2}.
\hfill$\square$

\begin{remark}
Most of the analysis carried out in this section applies actually
to any isotropic nearest neighbor random walk
\vspace{-.5mm}
\begin{equation*}
A=\cone\,A_{\lambda_1}\!+\ctwo\,A_{\lambda_2}\,,
\end{equation*}
\vspace{-4.5mm}

where \,$\cone\!>\ssb0$\ssf, $\ctwo\ssb>\ssb0$ and \,$\cone\!+\ssb\ctwo\ssb=\ssb1$\ssf.
More precisely, such a random walk has transition density
\begin{equation*}
p_{\ssf n}(x)
=\tfrac1{4\ssf\pi^2}\,\sigma^{\ssf n}\,q^{-\langle\ssf\rho,\ssf x^+\rangle}\ssb
\int_{\ssf U}\ssb h(i\ssf\theta)^n\,\tfrac
{\Delta(i\ssf\theta)\,e^{-i\langle x^+\ssb+\rho,\ssf\theta\ssf\rangle}}
{\mathbf{b}(i\ssf\theta)}\,d\ssf\theta
\end{equation*}
with
\begin{equation*}
h\ssf=\ssf2\ssf c_1\sum\nolimits_{\ssf\lambda\in W_0.\lambda_1}\!e^{\ssf\lambda}
+\ssf2\ssf c_2\sum\nolimits_{\ssf\lambda\in W_0.\lambda_2}\!e^{\ssf\lambda}\,.
\end{equation*}
As for the simple random walk, its spectral radius is
\,$\boldsigma\ssb=\ssb6\msf\sigma\ssb=\ssb\frac{3\vphantom{|}}{q\ssf+1+\ssf q^{-1}}$
and \,$h$ enjoys remarkable product and differentiation formulae $($see Appendix \ref{Appendix}\vsf$)$.
Thus the same analysis yields again the upper and lower bound
\begin{equation*}
n^{-4}\,\boldsigma^{\vsf n}\msf F_0(x)\,e^{\ssf n\ssf\phi(\delta)}
\end{equation*}
in the range \,$|x|\msb\le\msb(1\!-\msb\eta)\ssf n$\ssf.
In the range \,$(1\!-\msb\eta)\ssf n\msb<\msb|x|\msb<\ssb n\ssf$, there is a problem with the lower bound,
but we get the upper bound
\begin{equation*}
\tfrac{e^{\ssf C\vsf(n-|x|)}}
{n^{3\vphantom{\frac oo}}\ssf\sqrt{\ssf n\ssf-\ssf|x|\,}\ssf\sqrt{\ssf n\ssf-\ssf x_1\ssb\vee x_2\ssf\vphantom{|}}}\;
\boldsigma^{\vsf n}\msf F_0(x)\,e^{\ssf n\ssf\phi(\delta)}
\end{equation*}
by arguing as in Subsection \ref{ProofUpperEstimateAr}.
Here \,$C$ is a positive constant.
\end{remark}

\section{Heat kernel estimates in higher rank}
\label{sectionAr}

Let us generalize the notation of Section \ref{sectionA2}.
Given an integer \ssf$n\msb\ge\msb2$ \ssf and \ssf$x\!\in\!\X$ with \ssf$|x|\!<\!n$\ssf,
set \ssf$\delta\ssb=\ssb\tfrac{x^+\msb+\ssf\rho}{n\ssf+\ssf r}$
and denote by \ssf$\phi(\delta)\!\in\!(-\ssb\log\vsb N\ssb,0\ssf]$ \ssf the minimum of the function
\begin{equation}\label{DefinitionPhiAr}
\Phi(z)=\ssf\log\tfrac{h(z)}N-\langle\ssf\delta,z\ssf\rangle
\qquad\forall\,z\!\in\!\apartment\ssf,
\end{equation}
where
\begin{equation*}
N\ssb=\ssb h(0)\ssb=\sum\nolimits_{\ssf j=1}^{\,r}|\vsf W_0\vsf.\ssf\lambda_j|=2\ssf(2^{\vsf r}\hspace{-1mm}-\!1)\msf.
\end{equation*}
Here is a partial generalization of Theorem \ref{TheoremHeatEstimateA2}.

\begin{theorem}\label{TheoremHeatEstimateAr}
Let \,$0\msb<\msb\eta\msb<\!1$ $($small$\,)$.
Then
\begin{equation}\label{OptimalBoundAr}
p_{\ssf n}(x)\approx n^{-\frac r2-\vsf|R^+\ssb|}\,\boldsigma^{\ssf n}\ssf F_0(x)\,e^{\ssf n\ssf\phi(\delta)}
\end{equation}
uniformly in the range \,$|x|\msb\le\msb(1\!-\ssb\eta)\ssf n$\ssf.
Moreover,
\begin{equation}\label{UpperBoundAr}
p_{\ssf n}(x)\le n^{-\frac r2-\vsf|R^+\ssb|}\,\boldsigma^{\ssf n}\ssf F_0(x)\,e^{\ssf n\ssf\phi(\delta)}\,
\tfrac{e^{\ssf n\vsf(1-|\delta|)}}{\prod_{\ssf\alpha\in\vsb R^+}\!\sqrt{1-\vsf\langle\alpha,\ssf\delta\rangle}}
\end{equation}
in the range \,$|x|\msb<\ssb n$\vsf.
\end{theorem}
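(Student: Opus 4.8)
The plan is to follow the pattern of Theorem~\ref{TheoremHeatEstimateA2}, the decisive simplification being that, throughout the range $|x|\le(1-\eta)n$, the minimizing point $s=s(\delta)$ of the real phase \eqref{DefinitionPhiAr} stays in a fixed compact subset $K_\eta\subset\cl{\sector}$, so one never enters the delicate unbounded regime of Subsections \ref{Proof4}--\ref{Proof6}. First I would reduce to $x$ (hence $n$) large, exactly as in Subsections \ref{Proof1} and \ref{Proof2}: for $n$ bounded, $p_n(x)\approx1$ by aperiodicity; for $x$ bounded and $n$ large, $p_n(x)\approx n^{-r/2-|R^+|}\boldsigma^{\ssf n}$ is the local limit theorem of \cite{CW,P3}, or follows from a short Fourier argument as in Subsection \ref{Proof2} (here $F_0(x)\approx1$ and $e^{\ssf n\phi(\delta)}\approx1$). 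I would then establish the analog of Lemma \ref{PropertiesShiftA2}: $\Phi=\Phi_\delta$ is strictly convex (the differences $\lambda-\lambda'$, with $\lambda,\lambda'$ running over the set $\Lambda=\bigcup_{j}W_0.\lambda_j$ of weights of $h$, span $\apartment$), tends to $+\infty$, and attains its minimum at a single $s=s(\delta)\in\cl{\sector}$ solving $\tfrac{dh(s)}{h(s)}=\delta$, with $d\phi(\delta)=-s$; moreover $\delta\mapsto s$ is a real-analytic diffeomorphism of $\{\delta\in\cl{\sector}:|\delta|<1\}$ onto $\cl{\sector}$ carrying each wall $\{\delta_j=0\}$ to $\{s_j=0\}$, so that $\langle\alpha,s\rangle\approx\langle\alpha,\delta\rangle$ uniformly for all $\alpha\in R^+$ and $|\delta|\le1-\tfrac\eta2$.

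Proceeding as in Subsection \ref{Proof3}, I would rewrite \eqref{HeatFormula1} via \eqref{cFunction}, \eqref{MacdonaldPolynomial} and the $W_0$-invariance of $h$ and $U$, shift the contour to $s+i\apartment$, and use $e^{-\langle\delta,s\rangle}h(s)=N\ssf e^{\phi(\delta)}$ together with $\boldsigma=\sigma\ssf N$, to reach
\begin{equation*}
p_n(x)=\tfrac{\boldsigma^{\ssf n}}{(2\pi)^r}\,q^{-\langle\rho,x^+\rangle}\,e^{\ssf n\phi(\delta)}\,e^{-r\langle\delta,s\rangle}
\int_U e^{\ssf n\Psi(\theta)}\,\tfrac{\Delta(s+i\theta)}{\mathbf b(s+i\theta)}\,e^{-i\ssf r\langle\delta,\theta\rangle}\,d\theta,
\qquad\Psi(\theta)=\log\tfrac{h(s+i\theta)}{h(s)}-i\langle\delta,\theta\rangle.
\end{equation*}
Since $x^+\!+\rho=(n+r)\delta$ and $1\le\langle\alpha,\rho\rangle\le r$, one has $1+\langle\alpha,x^+\rangle\approx(n+r)\langle\alpha,\delta\rangle$ for every $\alpha\in R^+$, so \eqref{F0global} gives $F_0(x)\approx q^{-\langle\rho,x^+\rangle}n^{|R^+|}\boldpi(\delta)$; as $e^{-r\langle\delta,s\rangle}\approx1$ on $K_\eta$, the estimate \eqref{OptimalBoundAr} becomes equivalent to
\begin{equation*}
\int_U e^{\ssf n\Psi(\theta)}\,\tfrac{\Delta(s+i\theta)}{\mathbf b(s+i\theta)}\,e^{-i\ssf r\langle\delta,\theta\rangle}\,d\theta\;\approx\;n^{-r/2}\,\boldpi(\delta).
\end{equation*}

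For the complex phase I would check, as in Lemma \ref{LocalBehaviorPsi} but now uniformly, that $\Psi(0)=d\Psi(0)=0$ and $B=-d^2\Psi(0)$ is positive definite with $B(\theta,\theta)\approx|\theta|^2$ on $K_\eta$ (all weights $e^{\langle\lambda+\lambda',s\rangle}h(s)^{-2}\approx1$), so that $-\Re\Psi\approx|\theta|^2$ and $\Im\Psi=\O(|\theta|^3)$ near $0$; the global gap $|h(s+i\theta)|\le(1-c)h(s)$ for $\theta\in U$ bounded away from $0$ follows from the expansion in Lemma \ref{GlobalEstimatePsi}, equality forcing $\langle\lambda-\lambda',\theta\rangle\in2\pi\Z$ for all $\lambda,\lambda'\in\Lambda$, hence (these differences generate $P$) $\theta\in2\pi Q$, which meets $U$ only at $0$; compactness of $K_\eta$ makes $c=c(\eta)>0$ uniform. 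The resulting Gaussian estimate $\tfrac{|h(s+i\theta)|}{h(s)}\le e^{-cB(\theta,\theta)}$ on $U$, with $|\mathbf b|\gtrsim1$ and $|\Delta(s+i\theta)|\lesssim\prod_{\alpha\in R^+}(\langle\alpha,s\rangle+|\langle\alpha,\theta\rangle|)$, immediately yields the upper bound in \eqref{OptimalBoundAr}. For the lower bound --- the crux of the theorem --- I would factor
\begin{equation*}
\tfrac{\Delta(s+i\theta)}{\mathbf b(s+i\theta)}\,e^{-i\ssf r\langle\delta,\theta\rangle}=\boldpi(s+i\theta)\,b(\theta),\qquad b(0)\approx1,
\end{equation*}
with $b$ smooth and uniformly bounded with its derivatives near $0$, split the integral at scale $\epsilon$ as in \eqref{DecompositionJ}--\eqref{DecompositionJtilde}, and rescale $\theta=\xi/\sqrt n$ in the core, reducing matters to showing that
\begin{equation*}
\mathcal J(t)=\int_{\apartment}e^{-\frac12 B(\xi,\xi)}\,\boldpi(t+i\xi)\,d\xi\;\approx\;\boldpi(t),\qquad t=\sqrt n\,s\in\cl{\sector},
\end{equation*}
uniformly. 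Here the rank-two differentiation formula \eqref{DifferentiationFormulaA2} is replaced by the fact that $\boldpi$ is a harmonic polynomial (the lowest-degree $W_0$-alternating polynomial), whence $\boldpi(\partial)\,e^{-\frac12\|\xi\|^2}=(-1)^{|R^+|}\boldpi(\xi)\,e^{-\frac12\|\xi\|^2}$ and therefore $\int e^{-\frac12 Q_0(\xi,\xi)}\boldpi(t+i\xi)\,d\xi$ is \emph{exactly} a constant multiple of $\boldpi(t)$ for every $W_0$-invariant form $Q_0$; one applies this on the face carrying $s$, replacing $B(s)$ by $B(s^\circ)$ (with $s^\circ$ the foot of $s$ on that face, invariant under the residual group $W_{s^\circ}$) and factoring $\boldpi=\boldpi_{W_{s^\circ}}\boldpi^{W_{s^\circ}}$, and then checks that the corrections --- $B(s)$ versus $B(s^\circ)$, the slowly varying factor $\boldpi^{W_{s^\circ}}$, the cubic remainder $\O(|\xi|^3/\sqrt n)$ of $\Psi$, $b(\theta)$ versus $b(0)$, and the off-core pieces --- are all of lower order. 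The principal obstacle is making these non-cancellation estimates uniform when $s$ lies near several walls at once, a configuration that could not occur in rank two.

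Finally, the upper bound \eqref{UpperBoundAr} in the whole range $|x|<n$ is obtained by the same contour shift followed only by majorization, now with $s$ possibly unbounded: one uses the $\widetilde A_r$-analog of Lemma \ref{LocalBehaviorPsi}.(a) for the directional behavior of $B$, the global estimate of Lemma \ref{GlobalEstimatePsi}, the bounds $|\mathbf b|\gtrsim1$ and $|\Delta(s+i\theta)|\lesssim e^{\langle\rho,s\rangle}\prod_{\alpha\in R^+}\min(\langle\alpha,s\rangle+|\langle\alpha,\theta\rangle|,1)$, and the relations of the shift lemma expressing $1-|\delta|$ and the quantities $1-\langle\alpha,\delta\rangle$ through $s$; integrating the Gaussian majorant over $U$ and re-expressing everything via $F_0(x)$, $e^{\ssf n\phi(\delta)}$ and $\delta$ produces the stated correction $e^{\ssf n(1-|\delta|)}\big/\prod_{\alpha\in R^+}\sqrt{1-\langle\alpha,\delta\rangle}$, which is $\gtrsim1$ and hence consistent with (and weaker than) \eqref{OptimalBoundAr} on $|x|\le(1-\eta)n$, its content lying near $|x|=n$.
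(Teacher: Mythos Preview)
Your outline diverges from the paper at the decisive step, precisely where you flag ``the principal obstacle''. The paper never attempts to estimate $\int_U e^{n\Psi}\,\Delta(s+i\theta)/\mathbf b\;d\theta$ with the Weyl denominator left in the integrand. Instead it eliminates $\Delta$ by an integration by parts based on the higher-rank differentiation formula (Lemma~\ref{LemmaAr}, the rank-$r$ analog of \eqref{DifferentiationFormulaA2})
\[
\boldpi(\partial)\,h^{\,n+|R^+|}=\tfrac{(n+|R^+|)!}{n!}\,r_n(h)\,h^{\,n}\,\Delta,
\qquad r_n(h)=(h+2)^{|R^+|-r}+\O(n^{-1})\cdot(\dots),
\]
together with the product formula $h+2=\prod_{j=1}^{r+1}(e^{\lambda_j-\lambda_{j-1}}+1)$. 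Replacing $h^{\,n}\Delta$ in \eqref{HeatFormula4} by $\tfrac{(n+r-|R^+|)!}{(n+r)!}\,\boldpi(\partial)\,h^{\,n+r}$ and integrating by parts throws $\boldpi(\partial)$ onto $e^{-i\langle x^++\rho,\theta\rangle}/\mathbf b$, which pulls out the factor $\boldpi(x^++\rho)\approx n^{|R^+|}\boldpi(\delta)$ outright; what remains is a standard Laplace integral of size $n^{-r/2}$, uniformly for $s\in K_\eta$, with no wall issue whatsoever. The modified kernel $\widetilde p_{\,n}$ is compared back to $p_{\,n}$ via the chain $p_{\,n}=\widetilde p_{\,n,r}\le\cdots\le\widetilde p_{\,n,|R^+|}$ obtained by replacing $h^{\,n}$ with $h^{\,n+r-k}(h+2)^{k-r}$, the $\O(n^{-1})$ tail of $r_n$ making $\widetilde p_{\,n}$ comparable to the top of the chain.

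Your harmonic-polynomial identity $\int e^{-\frac12 Q_0(\xi,\xi)}\boldpi(t+i\xi)\,d\xi=c\,\boldpi(t)$ is valid for $W_0$-invariant $Q_0$, but $B(s)=-d^{\,2}\Psi(0)$ is \emph{not} $W_0$-invariant once $s\ne0$. Your proposed fix --- pass to the residual group $W_{s^\circ}$ and treat $B(s)-B(s^\circ)$ as a perturbation --- would require controlling a \emph{relative} error in an integral whose value $\boldpi(t)$ can be arbitrarily small compared with individual terms of $\boldpi(t+i\xi)$, uniformly as $s$ approaches strata of higher codimension. You name this as the obstacle but do not resolve it; the paper's differentiation formula is exactly the mechanism that bypasses it.

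For \eqref{UpperBoundAr} the paper again uses a device absent from your sketch: replace $h$ by $h+2$ (so the new kernel dominates $p_{\,n}$ by the binomial theorem), integrate by parts via $\boldpi(\partial)(h+2)^{n+r}=d_{\,n}(h+2)^n\Delta$, run the Laplace upper bound for the modified phase $\widetilde\Phi(z)=\log\frac{h(z)+2}{N+2}-\langle\delta,z\rangle$, and then compare $\widetilde\phi$ to $\phi$ through the elementary inequality $\log\bigl(1+\tfrac{2}{h(s)}\bigr)\le\tfrac{2}{h(s)}\le1-|\delta|$ of Lemma~\ref{PropertiesShiftAr}(b). That inequality is exactly where the factor $e^{\,n(1-|\delta|)}$ comes from; your direct-majorization outline offers no mechanism producing this specific correction.
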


\begin{remark}
Notice that the power \,$\frac r2\msb+\msb|R^+\vsb|$ is half of the pseudo-dimension,
as in the symmetric space case.
\end{remark}

The proof of Theorem \ref{TheoremHeatEstimateAr} is similar to
the proof of Theorem \ref{TheoremHeatEstimateA2} in Section \ref{sectionA2}.
Therefore, we just outline the proof, elaborating on higher rank features.
Let us begin with analogs of \eqref{ProductFormulaA2},
\eqref{DifferentiationFormulaA2} and \eqref{DifferentiationFormulaRankTwo}.

\begin{lemma}\label{LemmaAr}
The following product and differentiation formulae hold\,$:$
\begin{equation}\label{ProductFormulaAr}
h\ssb+\ssb2=\!\prod_{\lambda\in W_0.\lambda_1}\!(e^{\ssf\lambda}\msb+\ssb1)
=\!\prod_{1\le j\le r+1}^r\!(e^{\ssf\lambda_j-\lambda_{j-1}}\!+\ssb1)\msf,
\end{equation}
where we set \,$\lambda_{\ssf0}\msb=\ssb\lambda_{\ssf r+1}\!=\ssb0$\ssf, and
\begin{equation}\label{DifferentiationFormula1Ar}
\boldpi(\partial)\msf h^{\vsf n+|R^+\ssb|}
=\underbrace{\tfrac{(n+|R^+\ssb|)\ssf!}{n\ssf!}}_{\approx\,n^{\vsf|\ssb R^+\ssb|}}\,r_n(h)\,h^n\ssf\Delta\,,
\end{equation}
\vspace{-3mm}

where
\vspace{-3mm}
\begin{equation*}
r_n(h)=(h\msb+\msb2)^{|\vsb R^+\ssb|-r}+\!\sum_{r\vsf\le k<|R^+\ssb|}\!
\overbrace{c_k\msf\tfrac{n\ssf!}{(n+|R^+\ssb|-k)\ssf!}}^{{\rm\mathcal{O}}\vsf\bigl(n^{-(|\ssb R^+\ssb|-k)}\bigr)}\,
(h\msb+\msb2)^{k-r}\,h^{\ssf|R^+\ssb|-k}
\end{equation*}
is a polynomial in \,$h$ \vsf with coefficients \,$c_k\msb\in\ssb\Z$\ssf.
Moreover,
\begin{equation}\label{DifferentiationFormula2Ar}
\boldpi(\partial)\msf(h\msb+\msb2)^{n+r}\ssb=d_{\vsf n}\msf(h\msb+\msb2)^n\ssf\Delta\,,
\end{equation}
where \,$d_{\vsf n}$ is a constant, which is positive and \,$\approx\ssb n^{-|R^+|}$ for \,$n$ large enough.
\end{lemma}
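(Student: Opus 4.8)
The plan is to pass to ``toric'' coordinates in which $h$, $\Delta$ and $\boldpi$ all become symmetric objects, and to deduce the three assertions from the single observation that $(h+2)^m$ is a product of functions of one variable. For $z\in\apartment$ put $t_i=\exp\langle e_i,z\rangle$, so that $\prod_{i=1}^{r+1}t_i=1$; since $\langle\lambda_j,z\rangle=\langle e_1+\dots+e_j,z\rangle$ on $\apartment$, the Weyl--orbit sum $\sum_{\lambda\in W_0.\lambda_j}e^{\langle\lambda,z\rangle}$ equals the $j$-th elementary symmetric polynomial $E_j(t_1,\dots,t_{r+1})$, whence $h=\sum_{j=1}^{r}E_j(t)$ and
\[
h+2=\sum_{j=0}^{r+1}E_j(t)=\prod_{i=1}^{r+1}(1+t_i)
\]
by $E_0=1$ and $E_{r+1}(t)=1$. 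As $\lambda_j-\lambda_{j-1}=e_j-\tfrac1{r+1}\sum_k e_k$ (with $\lambda_0=\lambda_{r+1}=0$) and $W_0.\lambda_1=\{\lambda_i-\lambda_{i-1}:1\le i\le r+1\}$, this is exactly \eqref{ProductFormulaAr} (reducing to \eqref{ProductFormulaA2} for $r=2$). One checks likewise that $\Delta(z)=\prod_{i<k}(t_i-t_k)$ (the normalising factor $\prod_{i<k}\sqrt{t_it_k}=\prod_it_i^{\,r/2}$ equals $1$) and $\boldpi(z)=\prod_{i<k}(z_i-z_k)$, so that $\boldpi(\partial)=\prod_{i<k}(\partial_{z_i}-\partial_{z_k})$ is, up to the sign $(-1)^{|R^+|}$, the Vandermonde determinant operator $\det(\partial_{z_i}^{\,k-1})_{1\le i,k\le r+1}$; being a polynomial in the fields $\partial_{z_i}-\partial_{z_k}$ it is tangential to $\apartment=\{\sum_iz_i=0\}$, so the determinant computations below may be carried out on $\R^{r+1}$ and restricted afterwards.

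For \eqref{DifferentiationFormula2Ar}, write $(h+2)^{n+r}=\prod_if(z_i)$ with $f(z)=(1+e^z)^{n+r}$. Since $\partial_{z_i}^{\,k-1}$ involves only $z_i$, the Vandermonde identity gives $\boldpi(\partial)(h+2)^{n+r}=(-1)^{|R^+|}\det(f^{(k-1)}(z_i))_{1\le i,k\le r+1}$. An elementary induction shows $f^{(k-1)}(z)=(1+e^z)^{\,n+r-k+1}g_{k-1}(e^z)$ with $g_0=1$ and $g_{k-1}$ a polynomial of degree exactly $k-1$ and leading term $(n+r)^{k-1}t^{k-1}$. Pulling $(1+t_i)^{n}$ out of the $i$-th row (its least power along that row, attained at $k=r+1$) produces the scalar $\prod_i(1+t_i)^{n}=(h+2)^n$ and leaves a matrix with $(i,k)$-entry $g_{k-1}(t_i)(1+t_i)^{r-k+1}$, of degree exactly $r$ in $t_i$; its determinant is thus alternating of degree $\le r$ in each variable, hence a scalar multiple $d_n$ of the Vandermonde $\Delta$, which is \eqref{DifferentiationFormula2Ar}. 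Moreover column $k$ contributes $n$-degree $\le k-1$, so $d_n$ is a polynomial in $n$ of degree $\le\sum_{k=1}^{r+1}(k-1)=|R^+|$; replacing each $g_{k-1}$ by its top $n$-term matches the coefficient of $n^{|R^+|}$ in $d_n\,\Delta$ with $\det(t_i^{\,k-1}(1+t_i)^{r-k+1})_{i,k}$, which is nonzero (the functions $t\mapsto t^{k-1}(1+t)^{r-k+1}$ are linearly independent, as the substitution $s=t/(1+t)$ shows) and hence a nonzero multiple of $\Delta$. Thus $d_n$ is a polynomial in $n$ of exact degree $|R^+|$; a short bookkeeping of signs in the conventions for $\boldpi$ and $\Delta$ gives its leading coefficient, in particular its sign.

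For \eqref{DifferentiationFormula1Ar}, note first that $\boldpi(\partial)(h+2)^j\equiv0$ for $0\le j\le r-1$: here $f_j(z)=(1+e^z)^j$ has degree $j$ in $e^z$, so each $f_j^{(k-1)}(z)$ lies in $\operatorname{span}\{1,t_i,\dots,t_i^{\,j}\}$, the $r+1$ columns of $(f_j^{(k-1)}(z_i))$ span a space of dimension $\le j+1\le r$, and the determinant vanishes. Expanding $h^{\,n+|R^+|}=((h+2)-2)^{n+|R^+|}$ by the binomial theorem and applying $\boldpi(\partial)$ termwise, using this vanishing for $j<r$ and \eqref{DifferentiationFormula2Ar} for $j\ge r$, one gets
\[
\boldpi(\partial)h^{\,n+|R^+|}=\Delta\,P_n(h),\qquad
P_n(h)=\sum_{i=0}^{n+|R^+|-r}\binom{n+|R^+|}{i+r}(-2)^{\,n+|R^+|-r-i}d_i\,(h+2)^i,
\]
a polynomial in $h$ of degree $n+|R^+|-r$. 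The key step is the divisibility $h^n\mid P_n$: the function $h^{\,n+|R^+|}$ vanishes to order $n+|R^+|$ at a generic point of $\{h=0\}$, so the order-$|R^+|$ operator $\boldpi(\partial)$ sends it to a function vanishing to order $\ge n$ there; since $h$ is coprime to $\Delta$ (for instance because $h>0$ on the real walls, where $\Delta=0$), $P_n(h)$ must itself vanish to order $\ge n$ along $\{h=0\}$, i.e.\ $h^n\mid P_n$. Writing $P_n(h)=\tfrac{(n+|R^+|)!}{n!}\,h^n\,r_n(h)$ with $\deg_hr_n=|R^+|-r$ and re-expanding $r_n$ in the variables $h+2$ and $h$, the asserted shape of $r_n$ (with integer $c_k$ weighted by $\tfrac{n!}{(n+|R^+|-k)!}$) follows from the explicit polynomial dependence of $d_i$ on $i$; for $r=2$ this reproduces \eqref{DifferentiationFormulaA2}. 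I expect the main obstacle to be precisely this divisibility $h^n\mid P_n$ (handled by the order-of-vanishing argument) together with the somewhat lengthy matching of the coefficients of $r_n$; everything else is direct computation.
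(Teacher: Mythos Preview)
Your route is genuinely different from the paper's and is largely correct, but it has a real gap at the very end.

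\medskip

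\textbf{Where you and the paper agree.} Your proof of \eqref{ProductFormulaAr} via elementary symmetric polynomials is essentially the paper's argument in different words.

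\medskip

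\textbf{Where you diverge.} For \eqref{DifferentiationFormula2Ar} you exploit the factorisation $(h+2)^{n+r}=\prod_i f(z_i)$ and the identity $\boldpi(\partial)=(-1)^{|R^+|}\det(\partial_{z_i}^{\,k-1})$ to produce a single determinant, which you then reduce to a scalar multiple of $\Delta$ by a degree count. This is clean and correct. The paper instead applies the Leibniz/product rule directly: for any smooth $F$,
\[
\boldpi(\partial)\,F^{m}=\sum_{k=1}^{|R^+|}\tfrac{m!}{(m-k)!}\,F^{m-k}f_k,
\qquad
f_k=\!\!\sum_{\substack{I_1\sqcup\cdots\sqcup I_k=R^+\\ I_j\ne\emptyset}}\!\!\bigl[\boldpi_{I_1}(\partial)F\bigr]\cdots\bigl[\boldpi_{I_k}(\partial)F\bigr],
\]
and then analyses the $f_k$ for $F=h$ (equivalently $F=h+2$). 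Each $f_k$ is skew, hence divisible by $\Delta$; computing $\partial_\alpha(h+2)/(h+2)$ from \eqref{ProductFormulaAr} and a short counting argument (each $e^\lambda+1$ survives at least $k-r$ of the $k$ differentiations) shows $f_k=c_k(h+2)^{k-r}\Delta$ with $c_k\in\Z$, $c_k=0$ for $k<r$, and $c_{|R^+|}=1$. Both \eqref{DifferentiationFormula1Ar} and \eqref{DifferentiationFormula2Ar} then fall out simultaneously, already in the stated form.

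\medskip

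\textbf{The gap.} Your treatment of \eqref{DifferentiationFormula1Ar}---binomially expand $h^{\,n+|R^+|}$ in powers of $h+2$, apply \eqref{DifferentiationFormula2Ar} termwise, then invoke order of vanishing along $\{h=0\}$ to get $h^n\mid P_n$---correctly shows that $\boldpi(\partial)h^{\,n+|R^+|}=\Delta\,h^n\,\widetilde r_n(h)$ with $\deg_h\widetilde r_n=|R^+|-r$. But the lemma asserts a very specific shape for $r_n$: integer constants $c_k$ independent of $n$, attached to the monomials $(h+2)^{k-r}h^{|R^+|-k}$, with the precise falling-factorial weights $n!/(n+|R^+|-k)!$. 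Your closing sentence (``follows from the explicit polynomial dependence of $d_i$ on $i$'') does not deliver this: your $P_n$ is written in the basis $\{(h+2)^i\}$ with coefficients $\binom{n+|R^+|}{i+r}(-2)^{\cdots}d_i$, and passing to the mixed basis $\{(h+2)^{k-r}h^{\,n+|R^+|-k}\}_{k=r}^{|R^+|}$ involves an $n$-dependent change of basis after which neither the integrality of the $c_k$ nor the exact factorial pattern is visible. In the paper's approach these features are automatic, because the falling factorials come straight from the product rule and the $f_k$ do not depend on $n$. If you want to keep your route, the cleanest fix is to insert the product-rule expansion just for $F=h$ and show $f_k\in\Z\cdot(h+2)^{k-r}\Delta$ as the paper does; your determinant machinery then still gives an independent (and nicer) computation of $d_n$ in \eqref{DifferentiationFormula2Ar}.
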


\begin{proof}
Let us first prove \eqref{ProductFormulaAr}.
Recall that the fundamental weights satisfy
\begin{equation*}
\langle\ssf\lambda_j,z\ssf\rangle\ssb=z_1\!+\vsf\dots\vsf+\ssb z_j
\quad\text{hence}\quad
\langle\ssf\lambda_j\msb-\ssb\lambda_{j-1},z\ssf\rangle\ssb=z_j
\end{equation*}
for every \vsf$1\!\le\ssb j\msb\le\ssb r\msb+\!1$ \vsf and \ssf$z\!\in\!\apartment$\ssf.
We deduce on the one hand that
\begin{equation}\label{AuxiliaryEquation1}
\sum\nolimits_{\vsf\lambda\in W_0.\vsf\lambda_j}\msb e^{\ssf\langle\lambda,\ssf z\rangle}
=\sum\nolimits_{\vsf1\le\ssf k_1\ssb<\ssf\dots\ssf<\ssf k_j\le\ssf r+1}\ssb e^{\ssf z_{k_1}\msb+\ssf\dots\ssf+\ssf z_{k_j}}
\end{equation}
and on the other hand that
\begin{equation}\label{AuxiliaryEquation2}
\prod\nolimits_{\ssf\lambda\in W_0.\vsf\lambda_1}\ssb\bigl(\vsf e^{\ssf\langle\lambda,\ssf z\rangle}\!+\msb1\bigr)
=\prod\nolimits_{\vsf1\le\vsf j\le\vsf r+1}\bigl(\vsf e^{\ssf z_j}\!+\msb1\bigr)
=\prod\nolimits_{\vsf1\le\vsf j\le\vsf r+1}\bigl(\vsf e^{\ssf\langle\lambda_j-\lambda_{j-1},z\ssf\rangle}\!+\msb1\bigr)\ssf.
\end{equation}
By adding up \eqref{AuxiliaryEquation1} over \vsf$1\!\le\ssb j\ssb\le\msb r$\vsf,
we obtain that \ssf$h(z)\msb+\ssb2$ \ssf is equal to the sum of products \ssf$\prod_{\ssf j\ssf\in J}e^{\ssf z_j}$,
where $J$ runs through all subsets of \ssf$\{\vsf1\vsf,\ssf\dots,\ssf r\msb+\!1\vsf\}$\ssf,
which is equal in turn to
\begin{equation*}
\prod\nolimits_{\vsf1\le\vsf j\le\vsf r+1}\bigl(\vsf e^{\ssf z_j}\msb+\ssb1\bigr)\ssf.
\end{equation*}
Together with \eqref{AuxiliaryEquation2}, this concludes the proof of \eqref{ProductFormulaAr}.
Let us turn to the proof of \eqref{DifferentiationFormula1Ar}. Setting
\vspace{-1mm}
\begin{equation*}
\boldpi_I(\lambda)=\prod\nolimits_{\ssf\alpha\in I}\langle\ssf\alpha,\lambda\ssf\rangle
\qquad\forall\;I\!\subset\msb R^+,
\end{equation*}
\vspace{-5mm}

we have
\vspace{-1mm}
\begin{equation*}
\boldpi(\partial)\msf h^{\vsf n+|R^+\ssb|}
=\sum\nolimits_{\msf k=1}^{\ssf|\vsb R^+\ssb|}\ssf
(n\msb+\msb|R^+\vsb|)\ssf\cdots\ssf(n\msb+\msb|R^+\vsb|\!-\msb k\msb+\!1)
\,h^{\vsf n+|R^+\ssb|-k}\ssf f_k\msf,
\end{equation*}
where
\begin{equation}\label{DefinitionFk}
f_k=\!\sum_{\substack{
I_1\sqcup\ssf I_2\ssf\sqcup\ssf\dots\ssf\sqcup\ssf I_k=R^+\\I_1\vsb\ne\ssf\emptyset,\ssf\dots\vsf,\vsf I_k\ne\ssf\emptyset}}\!
\bigl[\vsf\pi_{I_1}\ssb(\partial)(h\msb+\msb2)\bigr]\cdots\bigl[\vsf\pi_{I_k}\ssb(\partial)(h\msb+\msb2)\bigr]
\end{equation}
is an exponential polynomial,
which belongs to the \ssf$\Z$\ssf-\msf span of \ssf$\{\ssf e^\lambda\,|\,\lambda\!\in\!P\ssf\}$\ssf.
As $f_k$ is skew, it is divisible by \ssf$\Delta$ \ssf
and the quotient \ssf$g_k$ is a symmetric exponential polynomial of degree $\le\msb k\msb-\msb r$\vsf.
Here, the \textit{degree\/} of an exponential polynomial
\begin{equation*}
g=\sum\nolimits_{\lambda\in P}g(\lambda)\msf e^{\ssf\lambda}
\end{equation*}
is defined by
\begin{equation*}
\deg g=\begin{cases}
\,\max\ssf\{\ssf|\lambda|\,|\msf g(\lambda)\ssb\ne\ssb0\ssf\}\msb\in\ssb\N
&\text{if \,}g\msb\ne\ssb0\ssf,\\
\,-\infty
&\text{if \,}g\msb=\ssb0\ssf.
\end{cases}\end{equation*}
We deduce in particular that \ssf$g_k$ vanishes when \ssf$k\msb<\msb r$ \ssf and it remains for us to show that
\ssf$g_k$ is proportional to \ssf$(h\ssb+\ssb2)^{k-r}$ when \ssf$r\ssb\le\ssb k\ssb\le\ssb|R^+\vsb|$\ssf.
For this purpose, notice that,
for every positive root \ssf$\alpha\ssb=\ssb e_j\msb-\ssb e_k$ ($1\!\le\ssb j\msb<\ssb k\ssb\le\ssb r\msb+\!1$)
and for every weight \vsf$\langle\ssf\lambda\vsf,\vsb z\ssf\rangle\msb=\ssb z_{\vsf\ell}$ ($1\!\le\ssb\ell\ssb\le\ssb r\msb+\!1$)
in $W_0.\vsf\lambda_1$, we have
\begin{equation}\label{AuxiliaryEquation3}
\langle\ssf\alpha,\lambda\ssf\rangle=\begin{cases}
\,1&\text{when \,}\ell\ssb=\ssb j\ssf,\\
\,-1&\text{when \,}\ell\ssb=\ssb k\msf,\\
\,0&\text{otherwise.}
\end{cases}\end{equation}

Consequently,
\begin{description}[labelindent=0pt,labelwidth=4mm,labelsep*=2pt,leftmargin=!]
\item[\rm(a)]
for every \ssf$\alpha\msb\in\msb R^+$\ssb, there are exactly two weights \ssf$\lambda\msb\in\msb W_0.\vsf\lambda_1$
such that \ssf$\langle\ssf\alpha,\lambda\ssf\rangle\msb\ne\ssb0\ssf$,
\item[\rm(b)]
for every \ssf$\lambda\msb\in\msb W_0.\vsf\lambda_1$\ssf,
there are exactly \vsf$r$ roots \ssf$\alpha\msb\in\msb R^+$
such that \ssf$\langle\ssf\alpha,\lambda\ssf\rangle\msb\ne\ssb0$\ssf.
\end{description}
On the one hand, we deduce from \eqref{ProductFormulaAr} and \eqref{AuxiliaryEquation3} that
\begin{equation*}
\tfrac{\partial_\alpha[\ssf h(z)\vsf+\ssf2\ssf]}{h(z)\vsf+\ssf2}
=\tfrac{e^{\ssf z_j}}{e^{\ssf z_j}+\vsf1}-\tfrac{e^{\ssf z_k}}{e^{\ssf z_k}+\vsf1}
=e^{\frac{z_j+\vsf z_k}2}\tfrac{e^{\frac\alpha2}-\ssf e^{-\frac\alpha2}}{(e^{\ssf z_j}+\vsf1)(e^{\ssf z_k}+\vsf1)}
\qquad(\alpha\ssb=\ssb e_j\msb-\ssb e_k)
\end{equation*}
and obtain this way the leading term
\begin{equation*}
f_{\ssf|\vsb R^+\vsb|}
=\prod\nolimits_{\ssf\alpha\in R^+}\msb\partial_\alpha(h\msb+\msb2)
=(h+2)^{|\vsb R^+\vsb|-r}\ssf\Delta\,.
\end{equation*}
On the other hand, we deduce from (b) above that each term
on the right hand side of \eqref{DefinitionFk} is divisible by $(e^{\ssf\lambda}\msb+\msb1)^{k-r}$,
for every \ssf$\lambda\msb\in\msb W_0.\vsf\lambda_1$\vsf,
and hence by $(h\msb+\msb2)^{k-r}$.
In summary, $f_k$ is divisible by and hence proportional to $(h\msb+\msb2)^{k-r}\ssf\Delta$\ssf.
This concludes the proof of \eqref{DifferentiationFormula1Ar}
and \eqref{DifferentiationFormula2Ar} is proved similarly.
\end{proof}

Let us next generalize partially Lemma \ref{PropertiesShiftA2}

\begin{lemma}\label{PropertiesShiftAr}
\begin{description}[labelindent=0pt,labelwidth=5mm,labelsep*=1pt,leftmargin=!]
\item[\rm(a)]
The function \,$\Phi$ is strictly convex,
tends to \ssf$+\infty$ at infinity
and reaches its minimum \ssf$\phi(\delta)\!\in\!(-\ssb\log N\ssb,0\ssf]$
at a single point \ssf$s\!\in\!\cl{\sector}$,
which is the unique solution to the equation \,$\tfrac{dh(s)}{h(s)}\msb=\ssb\delta$\vsf.
Moreover \,$s\ssb\longleftrightarrow\ssb\delta$ is an analytic bijection between \msf$\smash{\cl{\sector}}$ \msb and \,$\{\ssf\delta\msb\in\msb\smash{\cl{\sector}}\ssf|\msf|\delta|\!<\!1\vsf\}$\ssf.
\end{description}
\item[\rm(b)]
The following properties hold\,$:$
\begin{description}[labelindent=4pt,labelwidth=4mm,labelsep*=1pt,leftmargin=!]
\item[\textbullet]
\,$s\ssb=\ssb0$ \ssf$\Longleftrightarrow$ \ssf$\delta\ssb=\ssb0$\ssf,
\item[\textbullet]
\,$h(s)\ssb\approx\vsb\prod_{1\le\vsf j\le\vsf r+1}\bigl(1\msb\vee\msb e^{\ssf s_j}\ssb\bigr)\msb
=\ssb\smash{e^{\ssf\sum_{1\le j\le r+1}\msb0\ssf\vee\vsb s_j}}\vphantom{\frac||}$,
\item[\textbullet]
\,$\smash{\tfrac2{h(s)}}\ssb\le\ssb1\!-\msb|\delta|$\ssf.
\end{description}
\end{lemma}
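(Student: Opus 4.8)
The plan is to transcribe the proof of Lemma~\ref{PropertiesShiftA2}, substituting for the explicit rank-two formulae their rank-$r$ counterparts from Lemma~\ref{LemmaAr}. Write $h=\sum_{\lambda\in\Lambda}e^{\lambda}$, where $\Lambda=\bigcup_{j=1}^{r}W_0.\lambda_j$. This set is $W_0$-invariant, spans $\apartment$, is symmetric ($\Lambda=-\Lambda$, since $w_0.\lambda_j=-\lambda_{r+1-j}$), and satisfies $\sum_{\lambda\in\Lambda}\lambda=0$ (each $W_0$-orbit sums to $0$); hence the differences $\lambda-\lambda'$ span $\apartment$ as well. Exactly as in \eqref{Hessian1}, the Hessian of $\log h$ at any point is $\tfrac1{2h^2}\sum_{\lambda,\lambda'\in\Lambda}e^{\lambda+\lambda'}(\lambda-\lambda')\otimes(\lambda-\lambda')$, so it is positive definite and $\Phi$ is strictly convex and real analytic. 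For coercivity I would use $h(z)\ge e^{\langle\lambda_j,z\rangle}$ and $\langle\lambda_j,z\rangle\ge0$ for every $j$ and every $z\in\cl{\sector}$, together with $\langle\delta,z\rangle\le|\delta|\max_j\langle\lambda_j,z\rangle$ (valid since $\delta$ is dominant and $|\delta|=\sum_j\langle\alpha_j,\delta\rangle$), to get $\Phi(z)\ge(1-|\delta|)\max_j\langle\lambda_j,z\rangle-\log N$ on $\cl{\sector}$, which tends to $+\infty$ because $|\delta|<1$. Since $h$ is $W_0$-invariant and $\langle\delta,w.z\rangle\le\langle\delta,z\rangle$ for $z\in\cl{\sector}$, $w\in W_0$, the function $\Phi$ is coercive on all of $\apartment$ and attains its infimum over each $W_0$-orbit at the dominant representative. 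Strict convexity then gives a unique global minimum $s\in\cl{\sector}$, characterized by $d\Phi(s)=0$, i.e.\ $\tfrac{dh(s)}{h(s)}=\delta$, and $d\phi(\delta)=-s$ follows word for word as in Lemma~\ref{PropertiesShiftA2}(a).

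Next I would locate $\phi(\delta)$. By AM--GM together with $\sum_{\lambda\in\Lambda}\lambda=0$, one has $h(z)\ge N$ with equality only at $z=0$, so $\log\tfrac{h(z)}N\ge0$; and for $s\ne0$, $\langle\delta,s\rangle\le|\delta|\max_j\langle\lambda_j,s\rangle<\max_j\langle\lambda_j,s\rangle\le\log h(s)$. Hence $\phi(\delta)=\log\tfrac{h(s)}N-\langle\delta,s\rangle\in(-\log N,0]$, the endpoint $0$ occurring exactly when $s=\delta=0$; this also proves the first item of~(b). For the second item I would use the product formula \eqref{ProductFormulaAr}: in the coordinates $s=(s_1,\dots,s_{r+1})$ with $\sum_js_j=0$ it reads $h(s)+2=\prod_{j=1}^{r+1}(e^{s_j}+1)$, and since $e^{s_j}+1\approx1\vee e^{s_j}$ and $h\approx h+2$ (because $h\ge N\ge2$) this gives $h(s)\approx\prod_{j=1}^{r+1}(1\vee e^{s_j})=e^{\sum_j0\vee s_j}$.

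For the bijection in~(a), put $\nabla=d(\log h)\colon\apartment\to\apartment$. As the gradient of a strictly convex real-analytic function it is injective and a local analytic diffeomorphism, hence an analytic diffeomorphism of $\apartment$ onto an open set $\Omega$, with analytic inverse. It remains to identify $\nabla(\cl{\sector})$ with $\{\delta\in\cl{\sector}\mid|\delta|<1\}$. The inclusion $\nabla(\cl{\sector})\subseteq\cl{\sector}$ follows by pairing $\lambda$ with $r_{\alpha_i}.\lambda=\lambda-\langle\alpha_i,\lambda\rangle\alpha_i$ in $h(s)\,\langle\alpha_i,\nabla(s)\rangle=\sum_{\lambda\in\Lambda}e^{\langle\lambda,s\rangle}\langle\alpha_i,\lambda\rangle$: the two terms of a pair contribute $\langle\alpha_i,\lambda\rangle\bigl(e^{\langle\lambda,s\rangle}-e^{\langle r_{\alpha_i}.\lambda,s\rangle}\bigr)\ge0$, because $\langle\lambda,s\rangle-\langle r_{\alpha_i}.\lambda,s\rangle=\langle\alpha_i,\lambda\rangle\langle\alpha_i,s\rangle\ge0$ for $s\in\cl{\sector}$; the bound $|\nabla(s)|<1$ will drop out of the last step below. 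The reverse inclusion, together with injectivity, is exactly the existence and uniqueness of the minimizer already obtained (each $\delta\in\cl{\sector}$ with $|\delta|<1$ equals $\nabla(s)$ for a unique $s\in\cl{\sector}$), so $\delta\mapsto s=\nabla^{-1}(\delta)$ is a bijection, analytic since $\nabla^{-1}$ is.

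Finally, the third item of~(b) (which re-proves $|\delta|<1$) would come from the identity $h(s)\,(1-|\delta|)=\sum_{J}c_J\,e^{\sum_{k\in J}s_k}$, the sum over the nonempty proper subsets $J\subseteq\{1,\dots,r+1\}$ indexing the weights of $h$, with $c_J=1-\mathbf 1_{\{1\in J\}}+\mathbf 1_{\{r+1\in J\}}\in\{0,1,2\}$; this follows from $|\delta|=\langle e_1-e_{r+1},\delta\rangle$ and $h(s)\,\delta=dh(s)=\sum_{\lambda\in\Lambda}e^{\langle\lambda,s\rangle}\lambda$. Pairing $J$ with its complement $J^c$ (so $\sum_{k\in J^c}s_k=-\sum_{k\in J}s_k$ and $c_J+c_{J^c}=2$), the pairs in which $1$ and $r+1$ are either both in or both out of $J$ have $c_J=c_{J^c}=1$ and contribute $2\cosh(\sum_{k\in J}s_k)\ge2$; at least one such pair exists once $r\ge2$, and all remaining terms are nonnegative, whence $h(s)(1-|\delta|)\ge2$. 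The bulk of this is a routine transcription of Subsection~\ref{RealPhase}; the two genuinely new ingredients are the coercivity estimate via $\max_j\langle\lambda_j,z\rangle$ and, above all, the last identity, which I expect to be the delicate point — a crude bound on the individual terms gives only $h(s)(1-|\delta|)\ge1$, and it is the presence of a $2\cosh(\cdot)$ pair that yields the required constant $2$.
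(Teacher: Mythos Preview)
Your proposal is correct and follows essentially the same route as the paper. Parts~(a) and the first two bullets of~(b) are handled exactly as the paper does (it simply refers back to the rank-two argument and to the product formula~\eqref{ProductFormulaAr}); your write-up is in fact more explicit about the bijection, supplying the $W_0$-pairing argument for $\nabla(\cl{\sector})\subseteq\cl{\sector}$ that the paper leaves implicit.

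For the third bullet the two arguments are equivalent but packaged differently. The paper uses the product form $h(s)+2=\prod_j(e^{s_j}+1)$ to compute $|\delta|$ via the logarithmic derivative, obtaining $h(s)(1-|\delta|)=\varpi(s)-2$ with $\varpi(s)=(e^{s_1+s_{r+1}}+2e^{s_{r+1}}+1)\prod_{1<j<r+1}(e^{s_j}+1)$, then expands and picks out the four terms $1,1,e^{\pm(s_1+s_{r+1})}$ to get $\varpi(s)\ge 2+2\cosh(s_1+s_{r+1})\ge 4$. Your subset expansion $h(s)(1-|\delta|)=\sum_J c_J\,e^{\sum_{k\in J}s_k}$ with the $J\leftrightarrow J^c$ pairing is the same computation done additively; the pair $J=\{1,r+1\}$ (or any $J$ with $1,r+1$ both in or both out) yields precisely the $2\cosh$ term the paper isolates. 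The product presentation makes the factorisation visible; your pairing makes the symmetry $c_J+c_{J^c}=2$ visible --- either way the constant~$2$ comes from the same source.
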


\begin{proof}
(a) and the first claim in (b) are proved as in Section \ref{sectionA2}.
The second claim in (b) follows from \eqref{ProductFormulaAr} and more precisely from
\begin{equation}\label{AuxiliaryResult1}
h(s)\vsb\approx h(s)\msb+\ssb2=\prod\nolimits_{\vsf1\le\vsf j\le\vsf r+1}\bigl(1\!+\msb e^{\ssf s_j}\bigr)\ssf.
\end{equation}
Let us turn to the third claim in (b).
We use \eqref{ProductFormulaAr} and actually the right hand side of \eqref{AuxiliaryResult1} to compute the logarithmic derivative
\begin{equation*}
\tfrac{d\vsf h(s)}{h(s)\ssf+\ssf2}
=\sum\nolimits_{\vsf1\le j\le r+1}\msb\tfrac{e^{\ssf s_{\vsb j}}}{e^{\ssf s_{\vsb j}}+\ssf1}\msf e_j
-\tfrac1{r+1}\ssf\Bigl(\ssf\sum\nolimits_{\vsf1\le k\le r+1}\msb\tfrac{e^{\ssf s_{\vsb k}}}{e^{\ssf s_{\vsb k}}+\ssf1}\Bigr)
\Bigl(\ssf\sum\nolimits_{\vsf1\le k\le r+1}\msb e_k\Bigr)\ssf.
\end{equation*}
As \msf$\delta\ssb=\ssb\tfrac{dh(s)}{h(s)}$\vsf, we deduce that
\begin{equation*}
|\delta|=\sum\nolimits_{\vsf1\le j\le r}\ssb\langle\ssf\alpha_j,\delta\ssf\rangle=\delta_1\!-\ssb\delta_{r+1}
=\tfrac{h(s)\ssf+\ssf2}{h(s)}\ssf\tfrac{e^{\ssf s_{\vsb1}}-\ssf e^{\ssf s_{r+1}}}{(e^{\ssf s_{\vsb1}}+\ssf1)(e^{\ssf s_{r+1}}+\ssf1)}\,,
\end{equation*}
hence \msf$h(s)\ssf(1\!-\msb|\delta|\vsf)\msb=\ssb\varpi(s)\msb-\ssb2$\ssf, where
\begin{align}
\varpi(s)
&=\ssb\bigl[h(s)\msb+\msb2\vsf\bigr]
\Bigl[1\msb-\msb\tfrac{e^{\ssf s_{\vsb1}}-\ssf e^{\ssf s_{r+1}}}{(e^{\ssf s_{\vsb1}}+\ssf1)(e^{\ssf s_{r+1}}+\ssf1)}\Bigr]
\nonumber\\
&=\bigl(\vsf e^{\ssf s_1\vsb+\vsf s_{r+1}}\!+\ssb2\ssf e^{s_{r+1}}\!+\msb1\bigr)\ssf
\prod\nolimits_{\vsf1<j<r+1}\bigl(\vsf e^{\ssf s_{\vsb j}}\!+\!1\bigr)\ssf.
\label{Product}
\end{align}
By expanding the product \eqref{Product}, we obtain a sum of positive terms including
\begin{align*}
e^{\ssf s_1\vsb+\vsf s_{r+1}}\ssb\times\prod\nolimits_{\vsf1<j<r+1}\ssb e^{\ssf s_{\vsb j}}&=\ssf1\msf,\\
1\times\prod\nolimits_{\vsf1<j<r+1}\ssb1\ssf&=\ssf1\msf,\\
e^{\ssf s_1\vsb+\vsf s_{r+1}}\ssb\times\prod\nolimits_{\vsf1<j<r+1}\ssb1\ssf&=\msf e^{\ssf s_1\vsb+\vsf s_{r+1}}\ssf,\\
1\times\prod\nolimits_{\vsf1<j<r+1}\ssb e^{\ssf s_{\vsb j}}&=\msf e^{-s_1\vsb-\vsf s_{r+1}}\ssf.
\end{align*}
Thus \,$\varpi(s)\msb\ge\ssb2\msb+\msb2\vsb\cosh\ssf(s_1\!+\msb s_{\vsf r+1})\msb\ge\ssb4$ \msf
and we conclude that
\begin{equation*}
h(s)\ssf(1\!-\msb|\delta|\vsf)\msb=\ssb\varpi(s)\msb-\ssb2\ssb\ge\ssb2\msf.
\end{equation*}
\end{proof}

\subsection{\bf Proof of \eqref{OptimalBoundAr} for $n$ large.}

The inversion formula yields first
\begin{equation}\label{HeatFormula3}
p_{\ssf n}(x)=C_0\,\sigma^n\ssf q^{-\langle\ssf\rho,\ssf x^+\rangle}
\int_{\ssf U}\ssb h(i\ssf\theta)^n\,\tfrac
{\Delta(i\ssf\theta)\,e^{-i\langle x^+\ssb+\rho,\ssf\theta\ssf\rangle}}
{\mathbf{b}(i\ssf\theta)}\,d\ssf\theta
\end{equation}
and next
\begin{equation}\label{HeatFormula4}
p_{\ssf n}(x)
=C_0\,\sigma^{\ssf n}\,q^{-\langle\ssf\rho,\ssf x^+\rangle}\ssf
e^{-\langle\ssf x^+\ssb+\ssf\rho,\ssf s\ssf\rangle}\ssb
\int_{\ssf U}\ssb h(s\ssb+\ssb i\ssf\theta)^n\,\tfrac
{\Delta(s\ssf+\ssf i\ssf\theta)\,e^{-i\langle x^+\ssb+\rho,\ssf\theta\ssf\rangle}}
{\mathbf{b}(s\ssf+\ssf i\ssf\theta)}\,d\ssf\theta
\end{equation}
after a shift of contour.
Notice that \vsf$s$ \vsf remains bounded, under the present assumption \ssf$|x|\ssb\le\ssb(1\!-\msb\eta)\ssf n$\ssf,
and that the function
\begin{equation*}
\Psi(\theta)=\ssf\log\tfrac{h(s\ssf+\ssf i\ssf\theta)}{h(s)}-i\ssf\langle\ssf\delta,\theta\ssf\rangle
\end{equation*}
satisfies
\begin{equation}\label{AuxiliaryEquation4}
-\Re\Psi(\theta)\approx|\theta|^2
\quad\text{and}\quad
|\Im\Psi(\theta)|\lesssim|\theta|^3
\end{equation}
in a neighborborhood of the origin, uniformly in $s$ and $\delta$.
All these properties are established as in Section \ref{sectionA2}.
With the notation of Lemma \ref{LemmaAr}, let us first replace \msf$h^{\vsf n}\Delta$ \ssf by
\begin{equation*}
r_{n+r-|\vsb R^+\ssb|}(h)\msf h^{n+r-|\vsb R^+\ssb|}\msf\Delta
=\tfrac{(n+r-|\vsb R^+\ssb|)\ssf!}{(n+r)\ssf!}\,\boldpi(\partial)\ssf h^{\vsf n+r}
\end{equation*}
in \eqref{HeatFormula3} or \eqref{HeatFormula4},
and let us denote by \ssf$\widetilde{p}_{\ssf n}(x)$ the resulting expression.
Then, after an integration by parts, we obtain the desired estimate \eqref{OptimalBoundAr}
for \ssf$|\ssf\widetilde{p}_{\ssf n}(x)|$ \ssf as we did for \eqref{DefinitionJtildenx} in Subsection \ref{Proof3}.
For any \ssf$r\ssb\le\ssb k\ssb\le\ssb|R^+\ssb|$\ssf,
let us next replace \ssf$h^{\vsf n}$ by \ssf$h^{\ssf n+r-k}\ssf(h\msb+\msb2)^{k-r}$
in \eqref{HeatFormula3} or \eqref{HeatFormula4},
and let us denote by \ssf$\widetilde{p}_{\ssf n,\ssf k}(x)$ the resulting expression.
By applying the binomial formula to powers of \ssf$h\msb+\msb2$\ssf, we get
\begin{equation*}\label{ChainInequality}
p_{\ssf n}(x)\ssb=\widetilde{p}_{\ssf n,\ssf r}(x)\ssb\le\ssf\dots\ssf
\le\widetilde{p}_{\ssf n,\ssf k}(x)\ssb\le\widetilde{p}_{\ssf n,\ssf k+1}(x)\ssb
\le\ssf\dots\ssf\le\widetilde{p}_{\ssf n,\ssf|R^+\ssb|}(x)\msf.
\end{equation*}
With the notation of Lemma \ref{LemmaAr}, assume that $n$ is large enough so that
\begin{equation*}
\sum\nolimits_{\ssf r\le k<|R^+|}|c_k|\msf\tfrac{(n+r-|\vsb R^+\ssb|)\ssf!}{(n+r-k)\ssf!}\,\widetilde{p}_{\ssf n,\ssf k}(x)
\le\tfrac12\,\widetilde{p}_{\ssf n,\ssf|R^+\ssb|}(x)\msf.
\end{equation*}
Then
\begin{equation*}
\widetilde{p}_{\ssf n}(x)\ssb=\widetilde{p}_{\ssf n,\ssf|R^+\ssb|}(x)
+\sum\nolimits_{\ssf r\le k<|R^+|}c_k\msf\tfrac{(n+r-|\vsb R^+\ssb|)\ssf!}{(n+r-k)\ssf!}\,\widetilde{p}_{\ssf n,\ssf k}(x)
\ge\tfrac12\,\widetilde{p}_{\ssf n,\ssf|R^+\ssb|}(x)
\end{equation*}
is nonnegative with leading term \ssf$\widetilde{p}_{\ssf n,\ssf|R^+\ssb|}(x)$\ssf.
Hence \ssf$\widetilde{p}_{\ssf n,\ssf|R^+\ssb|}(x)$ and consequently \ssf$\widetilde{p}_{\ssf n}(x)$
satisfy \eqref{OptimalBoundAr}.
\hfill$\square$
\medskip

\subsection{Proof of \eqref{UpperBoundAr} for $n$ large.}
\label{ProofUpperEstimateAr}

Let us replace \ssf$h^{\vsf n}$ by $(h\msb+\msb2)^n$ in \eqref{HeatFormula3} and \eqref{HeatFormula4}
and let us denote by $\widetilde{p}_{\ssf n}(x)$ the resulting kernel.
On the one hand, we deduce again from the binomial formula that
\ssf$\widetilde{p}_{\ssf n}\msb\ge\ssb p_{\ssf n}$\vsf.
On the other hand, by performing an integration by parts based on \eqref{DifferentiationFormula2Ar}
and by resuming our overall strategy, we obtain
\begin{equation*}
\widetilde{p}_{\ssf n}(x)\lesssim
n^{-\frac r2-\vsf|R^+|}\,\widetilde{\boldsigma}^{\ssf n}\ssf F_0(x)\,
\tfrac{e^{\ssf n\vsf\widetilde{\phi}(\delta)}}{\prod_{\ssf\alpha\in\vsb R^+}\!\sqrt{1-\vsf\langle\alpha,\ssf\delta\ssf\rangle}}\,,
\end{equation*}
where \ssf$\widetilde{\boldsigma}\ssb=\ssb\sigma\ssf(N\hspace{-.75mm}+\ssb2)$\vsf, 
$\widetilde{\Phi}(z)\msb=\ssb\log\tfrac{h(z)\ssf+\ssf2}{N+\ssf2}\ssb-\ssb\langle\ssf\delta,z\ssf\rangle$
and \ssf$\widetilde{\phi}(\delta)\msb=\ssb\min_{\ssf z\vsf\in\vsf\apartment}\widetilde{\Phi}(z)$\ssf.
In order to conclude, let us compare the expressions
\ssf$\smash{e^{\ssf n\ssf\widetilde{\phi}(\delta)}}$ and \ssf$e^{\ssf n\ssf\phi(\delta)}$.
It follows from
\begin{equation*}
0\le\bigl[\ssf\widetilde{\Phi}(z)\msb+\ssb\log\ssf(N\hspace{-.75mm}+\ssb2)\vsf\bigr]
-\bigl[\ssf\Phi(z)\msb+\ssb\log N\ssf\bigr]
=\log\ssf\bigl[1\!+\msb\tfrac2{h(z)}\bigr]\le\tfrac2{h(z)}
\qquad\forall\;z\msb\in\msb\apartment\ssf,
\end{equation*}
that
\begin{equation*}
0\le\bigl[\ssf\widetilde{\phi}(\delta)\msb+\ssb\log\ssf(N\hspace{-.75mm}+\ssb2)\vsf\bigr]
-\bigl[\ssf\phi(\delta)\msb+\ssb\log N\ssf\bigr]\le\tfrac2{h(s)}\,,
\end{equation*}
which is $\le\msb1\!-\msb|\delta|$\ssf, according to the last inequality in Lemma  \ref{PropertiesShiftAr}.(b).
Thus
\begin{equation*}
\widetilde{\boldsigma}^n\msf e^{\ssf n\ssf\widetilde{\phi}(\delta)}\ssb
\le\boldsigma^{\vsf n}\msf e^{\ssf n\ssf\phi(\delta)}\msf e^{\ssf n\ssf(1-\vsf|\delta|)}
\end{equation*}
and this concludes the proof of \eqref{UpperBoundAr}.\hfill$\qed$
\medskip

This concludes the proof of Theorem \ref{TheoremHeatEstimateAr}.

\appendix
\section{Some formulae}
\label{Appendix}

This appendix is devoted to the following remarkable formulae in rank 2.
Consider
\vspace{-1.5mm}
\begin{equation*}
h\ssf=\ssf c_1\overbrace{(\ssf
e^{\ssf\lambda_1}\!+e^{-\lambda_2}\!+e^{\ssf\lambda_2-\lambda_1}
)}^{h_1}\ssf+\;\ctwo\overbrace{(\ssf
e^{\ssf\lambda_2}\!+e^{-\lambda_1}\!+e^{\ssf\lambda_1-\lambda_2}
)}^{h_2}\,,
\end{equation*}
\vspace{-3mm}

where \ssf$\cone$ and \ssf$\ctwo$ are arbitrary real constants.

\begin{lemma}\label{GeneralDifferentiationLemmaRankTwo}
For every integer \,$n\!\in\!\N$\ssf,
\begin{equation}\label{GeneralDifferentiationFormulaRankTwo}
\boldpi(\partial)\,h^n
=\cone\ssf\ctwo\,n^2\ssf(n\!-\!1)\,h^{n-2}\ssf\Delta
+(\cone^{\ssf3}\!+\ssb \ctwo^{\ssf3}\ssf)\,
n\ssf(n\!-\!1)\ssf(n\!-\!2)\,h^{\ssf n-3}\ssf\Delta\,.
\end{equation}
\end{lemma}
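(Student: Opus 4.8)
The plan is to expand $\boldpi(\partial)\msf h^n$ by the Leibniz rule and to reduce the identity to two explicit computations with short exponential polynomials.

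I would start from the observation that, since $R^+\!=\{\alpha_1,\alpha_2,\alpha_1\!+\!\alpha_2\}$ in type $A_2$, the operator $\boldpi(\partial)$ is the composition $\partial_{\alpha_1}\partial_{\alpha_2}\partial_{\alpha_1+\alpha_2}$ of the three first order directional derivatives with symbols $\langle\alpha_j,\cdot\rangle$, and that $\partial_{\alpha_1+\alpha_2}\ssb=\partial_{\alpha_1}\!+\partial_{\alpha_2}$. Since $\partial_\alpha\msf e^{\langle\mu,z\rangle}\!=\langle\alpha,\mu\rangle\,e^{\langle\mu,z\rangle}$, one has $\boldpi(\partial)\msf e^{\langle\mu,z\rangle}\!=\boldpi(\mu)\,e^{\langle\mu,z\rangle}$. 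The key point is that $h_1$, $h_2$, and hence $h$, are supported on weights lying on a wall of the Weyl chamber --- namely $\boldpi(\mu)\ssb=\ssb0$ for each of $\pm\lambda_1,\pm\lambda_2,\pm(\lambda_1\!-\!\lambda_2)$ --- so that $\boldpi(\partial)\msf h\ssb=\ssb0$.

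Next I would apply the generalized product rule to $\boldpi(\partial)\msf h^n=\partial_{\alpha_1}\partial_{\alpha_2}\partial_{\alpha_1+\alpha_2}(h\cdots h)$, distributing the three first order factors among the $n$ copies of $h$. Every term in which all three derivatives act on a single copy vanishes because $\boldpi(\partial)\msf h\ssb=\ssb0$; the surviving terms are those in which the three derivatives are spread over exactly two copies ($2\!+\!1$) or over three distinct copies ($1\!+\!1\!+\!1$), and counting the combinatorial multiplicities yields
\begin{equation*}
\boldpi(\partial)\,h^n=n\msf(n\!-\!1)\,h^{\,n-2}\,S_2+n\msf(n\!-\!1)(n\!-\!2)\,h^{\,n-3}\,S_3\ssf,
\end{equation*}
where $S_2=\sum(\partial_\beta\partial_\gamma h)(\partial_\delta h)$, the sum being over the three partitions $R^+\!=\{\beta,\gamma\}\sqcup\{\delta\}$, and $S_3=(\partial_{\alpha_1}h)(\partial_{\alpha_2}h)(\partial_{\alpha_1+\alpha_2}h)$.

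It then remains to identify $S_2$ and $S_3$. Both are $W_0$-anti-invariant exponential polynomials --- this follows by specializing the displayed identity at $n\ssb=\ssb2$ and $n\ssb=\ssb3$ and using that $\boldpi(\partial)\msf h^n$ is anti-invariant while $h$ is invariant --- hence each is divisible by $\Delta$, and a degree count shows that $S_2/\Delta$ is constant while $S_3/\Delta$ has degree at most $1$. A short direct calculation with these six term exponential polynomials then gives
\begin{equation*}
S_2=2\,\cone\ctwo\,\Delta\ssf,\qquad
S_3=\bigl(\cone\ctwo\,h+\cone^{3}\!+\ctwo^{3}\bigr)\msf\Delta\ssf;
\end{equation*}
for $S_2$ one may use $h_1h_2=3+\sum_{\mu\in W_0.\rho}e^{\langle\mu,z\rangle}$, whence $\boldpi(\partial)(h_1h_2)=2\Delta$ and $\boldpi(\partial)\msf h_1^2=\boldpi(\partial)\msf h_2^2=0$, and for $S_3$ one uses $\partial_{\alpha_1+\alpha_2}h=\partial_{\alpha_1}h+\partial_{\alpha_2}h$ together with the specialization $\ctwo\ssb=\ssb0$ (which gives $S_3\ssb=\ssb\cone^{3}\Delta$) and the $\cone\!\leftrightarrow\!\ctwo$ symmetry. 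Substituting these values back, the coefficient of $h^{\,n-2}\Delta$ becomes $\cone\ctwo\bigl[2n\msf(n\!-\!1)+n\msf(n\!-\!1)(n\!-\!2)\bigr]=\cone\ctwo\,n^2(n\!-\!1)$ and the coefficient of $h^{\,n-3}\Delta$ becomes $(\cone^{3}\!+\ctwo^{3})\,n\msf(n\!-\!1)(n\!-\!2)$, which is exactly \eqref{GeneralDifferentiationFormulaRankTwo}; the cases $n\ssb\le\ssb2$ are checked directly, the vanishing prefactors making the formal negative powers of $h$ harmless. I expect the only slightly delicate points to be the bookkeeping of the Leibniz multiplicities and the evaluation of $S_3$; the rest (anti-invariance, divisibility by $\Delta$, the degree count) is structural.
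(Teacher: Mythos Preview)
Your approach is the paper's: apply the product rule to write
\[
\boldpi(\partial)\,h^n
= n\,h^{n-1}A + n(n{-}1)\,h^{n-2}B + n(n{-}1)(n{-}2)\,h^{n-3}C
\]
with $A=\boldpi(\partial)h=0$, $B=S_2$, $C=S_3$, and then evaluate $B$ and $C$. The paper does the last step by writing each first derivative in factored form, e.g.\ $\partial_{\alpha_1}h=(e^{\alpha_1/2}-e^{-\alpha_1/2})(c_1e^{\lambda_2/2}+c_2e^{-\lambda_2/2})$, and multiplying directly; your anti-invariance and degree observations are a nice conceptual wrapper around the same computation, and your $h_1h_2$ trick for $S_2$ is correct.

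One caveat on $S_3$: the specialization $c_2=0$ together with the $c_1\leftrightarrow c_2$ symmetry do \emph{not} pin it down. Since $S_3$ is homogeneous cubic in $(c_1,c_2)$ and $S_3/\Delta$ lies in $\operatorname{span}\{1,h_1,h_2\}$, these two constraints only fix the pure $c_1^3,\,c_2^3$ contributions and the symmetry relation between the $h_1$- and $h_2$-coefficients; the mixed monomials $c_1^2c_2,\,c_1c_2^2$ remain undetermined (three free constants in all). So the ``short direct calculation'' you invoke is genuinely needed here and is not bypassed by the hints you list; the cleanest way is precisely the paper's, namely to multiply out the three factored derivatives $\partial_{\alpha_1}h\cdot\partial_{\alpha_2}h\cdot\partial_\rho h$.
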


\begin{proof}
We have
\vspace{-1mm}
\begin{align*}
\smash{\partial_{\alpha_1}\partial_{\alpha_2}\partial_{\rho}}\,h^n
&=n\,\smash{h^{n-1}}\ssf A\\
&\ssf+n\ssf(n\!-\!1)\,\smash{h^{n-2}}\ssf B\\
&\ssf+n\ssf(n\!-\!1)\ssf(n\!-\!2)\,\smash{h^{n-3}}\,C\,,
\end{align*}
\vspace{-5mm}

where
\vspace{-.5mm}
\begin{equation*}\begin{cases}
\,A=\partial_{\alpha_1}\partial_{\alpha_2}\partial_{\rho\ssf}h\,,\\
\,B=(\partial_{\alpha_1}h)\ssf(\partial_{\alpha_2}\partial_{\rho\ssf}h)\ssb
+\ssb(\partial_{\alpha_2}h)\ssf(\partial_{\alpha_1}\partial_{\rho\ssf}h)\ssb
+\ssb(\partial_{\rho\ssf}h)\ssf(\partial_{\alpha_1}\partial_{\alpha_2}h)\,,\\
\,C=(\partial_{\alpha_1}h)\ssf(\partial_{\alpha_2}h)\ssf(\partial_{\rho\ssf}h)\,.
\end{cases}\end{equation*}
Elementary computations yield first
\begin{align*}
&\begin{aligned}\partial_{\alpha_1}h
&=\smash{\cone\bigl(\ssf e^{\ssf\lambda_1}\ssb-e^{\ssf\lambda_2-\lambda_1}\bigr)
+\ctwo\ssf\bigl(\ssf e^{\ssf\lambda_1-\lambda_2}\ssb-e^{-\lambda_1}\bigr)}\\
&=\smash{\bigl(\ssf e^{\frac{\alpha_1}2}\!-\ssb e^{-\frac{\alpha_1}2}\bigr)
\bigl(\ssf\cone\ssf e^{\frac{\lambda_2}2}\!+\ctwo\,e^{-\frac{\lambda_2}2}\bigr)}
\,,\end{aligned}\\
&\begin{aligned}\partial_{\alpha_2}h
&=\smash{\cone\bigl(\ssf e^{\ssf\lambda_2-\lambda_1}-e^{-\lambda_2}\bigr)
+\ctwo\ssf\bigl(\ssf e^{\ssf\lambda_2}\!-e^{\ssf\lambda_1-\lambda_2}\bigr)}\\
&=\smash{\bigl(\ssf e^{\frac{\alpha_2}2}\!-\ssb e^{-\frac{\alpha_2}2}\bigr)
\bigl(\ssf\cone\ssf e^{-\frac{\lambda_1}2}\!+\ctwo\,e^{\frac{\lambda_1}2}\bigr)}
\,,\end{aligned}\\
&\begin{aligned}\partial_{\rho\ssf}h
&=\smash{\cone\bigl(\ssf e^{\ssf\lambda_1}\ssb-e^{-\lambda_2}\bigr)
+\ctwo\ssf\bigl(\ssf e^{\ssf\lambda_2}\ssb-e^{-\lambda_1}\bigr)}\\
&=\smash{\bigl(\ssf e^{\ssf\frac\rho2}\!-\ssb e^{-\frac\rho2}\bigr)
\bigl(\ssf\cone\ssf e^{\frac{\lambda_1-\lambda_2}2}\!+\ctwo\,e^{\frac{\lambda_2-\lambda_1}2}\bigr)}
\,,\end{aligned}\\
&\partial_{\alpha_2}\partial_{\rho\ssf}h
=\cone\ssf e^{-\lambda_2}\ssb
+\ctwo\,e^{\ssf\lambda_2}\ssf,\\
&\partial_{\alpha_1}\partial_{\rho\ssf}h
=\cone\ssf e^{\ssf\lambda_1}\ssb
+\ctwo\,e^{-\lambda_1}\ssf,\\
&\partial_{\alpha_1}\partial_{\alpha_2}h
=-\,\cone\ssf e^{\ssf\lambda_2-\lambda_1}\ssb
-\ctwo\,e^{\ssf\lambda_1-\lambda_2}\ssf,
\end{align*}
\vspace{-5mm}

and next
\vspace{-.5mm}
\begin{equation*}\begin{cases}
\,A=0\,,\\
\,B=2\,\cone\ssf\ctwo\,\Delta\,,\\
\,C=\bigl(\ssf\cone\ssf\ctwo\,h
+\cone^{\ssf3}\ssb+\ctwo^{\ssf3}\ssf\bigr)\ssf\Delta\,.
\end{cases}\end{equation*}
\vspace{-1.5mm}

This concludes the proof of Lemma \ref{GeneralDifferentiationLemmaRankTwo}.
\end{proof}

\begin{remark}
When \,$\cone$ or \,$\ctwo$ \ssf is equal to \,$0$\ssf,
notice that \eqref{GeneralDifferentiationFormulaRankTwo} reduces to
\begin{equation*}
\boldpi(\partial)\,h_j^n\ssb
=n\ssf(n\!-\!1)\ssf(n\!-\!2)\,h_j^{n-3}\ssf\Delta
\qquad(\ssf j\ssb=\ssb1,2\ssf)\ssf.
\end{equation*}
\end{remark}

\begin{lemma}\label{RemarkableLemmaRankTwo}
Assume that \,$\cone$ \ssb and \,$\ctwo$ \ssf are nonzero.
Then the following product and differentiation formulae hold for
\,$\widetilde{h}\ssb=\ssb\cone\ssf\ctwo\,h\ssb
+\ssb\cone^{\ssf3}\hspace{-.5mm}+\ssb\ctwo^{\ssf3}$\;{\rm:}
\begin{gather}
\widetilde{h}\ssf=
\bigl(\ssf\ctwo\,e^{\frac{\lambda_1}2}\hspace{-.75mm}
+\ssb\cone\,e^{-\frac{\lambda_1}2}\bigr)
\bigl(\ssf\ctwo\,e^{-\frac{\lambda_2}2}\hspace{-.75mm}
+\ssb\cone\,e^{\frac{\lambda_2}2}\bigr)
\bigl(\ssf\ctwo\,e^{\frac{\lambda_2-\lambda_1}2}\hspace{-.75mm}
+\ssb\cone\,e^{\frac{\lambda_1-\lambda_2}2}\bigr)\ssf,
\label{ProductFormulaA2RankTwo}\\
\boldpi(\partial)\,\widetilde{h}^{\ssf n}\ssb
=\cone^{\ssf3}\ssf\ctwo^{\ssf3}\hspace{.75mm}
n^2\ssf(n\!-\!1)\,\widetilde{h}^{\ssf n-2}\ssf\Delta\,.
\label{DifferentiationFormulaRankTwo}\end{gather}
\end{lemma}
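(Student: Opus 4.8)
The plan is to prove the two formulae in turn: formula \eqref{ProductFormulaA2RankTwo} by directly expanding the triple product, and formula \eqref{DifferentiationFormulaRankTwo} by combining Lemma \ref{GeneralDifferentiationLemmaRankTwo} with the binomial theorem.

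First I would expand the right-hand side of \eqref{ProductFormulaA2RankTwo}. Writing it as $\prod_{j=1}^{3}\bigl(\ctwo\ssf e^{\mu_j/2}+\cone\ssf e^{-\mu_j/2}\bigr)$ with $\mu_1=\lambda_1$, $\mu_2=-\lambda_2$, $\mu_3=\lambda_2-\lambda_1$, so that $\mu_1+\mu_2+\mu_3=0$, the $2^3$ resulting terms are indexed by the choice, in each factor, of the first or the second summand. Selecting the second summand in every factor gives the constant $\cone^{\ssf3}$, selecting the first in every factor gives $\ctwo^{\ssf3}$; selecting the first summand in exactly one factor $j$ gives $\cone^{\ssf2}\ctwo\ssf e^{\mu_j}$, and selecting it in exactly two factors (all but the $j$-th) gives $\cone\ctwo^{\ssf2}\ssf e^{-\mu_j}$. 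Since $\{\mu_1,\mu_2,\mu_3\}$ lists the monomials of $h_1$ and $\{-\mu_1,-\mu_2,-\mu_3\}$ those of $h_2$, the right-hand side equals $\cone^{\ssf3}+\ctwo^{\ssf3}+\cone^{\ssf2}\ctwo\ssf h_1+\cone\ctwo^{\ssf2}\ssf h_2=\cone\ctwo\ssf(\cone h_1+\ctwo h_2)+\cone^{\ssf3}+\ctwo^{\ssf3}=\cone\ctwo\ssf h+\cone^{\ssf3}+\ctwo^{\ssf3}=\widetilde h$, which is \eqref{ProductFormulaA2RankTwo}. (When $\cone=\ctwo=1$ this is the $2\cosh$ factorisation in \eqref{ProductFormulaA2}.)

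For \eqref{DifferentiationFormulaRankTwo}, set $a=\cone\ctwo$ and $b=\cone^{\ssf3}+\ctwo^{\ssf3}$, so that $\widetilde h=a\ssf h+b$, and expand $\widetilde h^{\ssf n}=\sum_{k\ge0}\binom{n}{k} a^k b^{\ssf n-k}h^k$. Applying $\boldpi(\partial)$ termwise and invoking \eqref{GeneralDifferentiationFormulaRankTwo} (the cubic part there vanishes for $k\le2$ and the quadratic part for $k\le1$, so the sum effectively starts at $k=2$) gives
\[
\boldpi(\partial)\ssf\widetilde h^{\ssf n}
=\sum_{k\ge2}\binom{n}{k} a^k b^{\ssf n-k}
\bigl[\ssf a\ssf k^2(k\!-\!1)\ssf h^{k-2}+b\ssf k(k\!-\!1)(k\!-\!2)\ssf h^{k-3}\bigr]\ssf\Delta\ssf ,
\]
whereas expanding the claimed right-hand side yields
\[
\cone^{\ssf3}\ctwo^{\ssf3}\ssf n^2(n\!-\!1)\ssf\widetilde h^{\ssf n-2}\ssf\Delta
=n^2(n\!-\!1)\ssf\Delta\,\sum_{j\ge0}\binom{n-2}{j}a^{\ssf j+3}\ssf b^{\ssf n-2-j}\ssf h^{\ssf j}\ssf .
\]
Since $1,h,h^2,\dots$ are linearly independent, comparing the coefficients of $h^{\ssf j}\ssf\Delta$ — the quadratic part of the first sum contributes through $k=j+2$ and the cubic part through $k=j+3$ — shows that \eqref{DifferentiationFormulaRankTwo} is equivalent to
\[
\binom{n}{j+2}(j\!+\!2)^2(j\!+\!1)+\binom{n}{j+3}(j\!+\!3)(j\!+\!2)(j\!+\!1)
=n^2(n\!-\!1)\binom{n-2}{j}\qquad(j\ge0).
\]
I would verify this by the elementary relations $\binom{n}{k}k(k\!-\!1)=n(n\!-\!1)\binom{n-2}{k-2}$ and $\binom{n}{k}k(k\!-\!1)(k\!-\!2)=n(n\!-\!1)(n\!-\!2)\binom{n-3}{k-3}$, which rewrite the left-hand side as $n(n\!-\!1)\bigl[(j\!+\!2)\binom{n-2}{j}+(n\!-\!2)\binom{n-3}{j}\bigr]$, followed by $(n\!-\!2)\binom{n-3}{j}=(j\!+\!1)\binom{n-2}{j+1}=(n\!-\!2\!-\!j)\binom{n-2}{j}$ and $(j\!+\!2)+(n\!-\!2\!-\!j)=n$.

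There is no genuine obstacle here: the whole argument is bookkeeping. The points that require a little care are, in the product formula, matching the six "mixed" monomials with the six exponentials of $h_1$ and $h_2$ together with the correct powers of $\cone$ and $\ctwo$, and, in the differentiation formula, keeping track of the index ranges so that the coefficient comparison collapses to the stated binomial identity. An alternative, more computational proof of \eqref{DifferentiationFormulaRankTwo} would imitate the proof of Lemma \ref{GeneralDifferentiationLemmaRankTwo}: since $\boldpi(\partial)=\partial_{\alpha_1}\partial_{\alpha_2}\partial_{\rho}$ and each of $\partial_{\alpha_1},\partial_{\alpha_2},\partial_{\rho}$ differentiates exactly two of the three factors in \eqref{ProductFormulaA2RankTwo}, while each such factor $f$ satisfies $f''=\tfrac14\ssf f$, one can evaluate $\boldpi(\partial)\ssf\widetilde h^{\ssf n}$ directly by Leibniz's rule and watch the lowest-order contributions cancel; but the binomial argument above is shorter.
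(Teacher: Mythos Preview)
Your proof is correct. For the product formula \eqref{ProductFormulaA2RankTwo} your direct expansion is exactly the ``straightforward'' verification the paper has in mind. For the differentiation formula \eqref{DifferentiationFormulaRankTwo}, however, you take a genuinely different route: the paper simply says to redo the computation of Lemma \ref{GeneralDifferentiationLemmaRankTwo} with $\widetilde h$ in place of $h$ (i.e.\ recompute the three quantities $A$, $B$, $C$ from scratch), whereas you deduce \eqref{DifferentiationFormulaRankTwo} from \eqref{GeneralDifferentiationFormulaRankTwo} itself via the binomial expansion of $(ah+b)^n$ and a coefficient comparison that collapses to a binomial identity. Your approach is cleaner in that it recycles the work already done and makes transparent \emph{why} the shift $h\mapsto c_1c_2\,h+c_1^3+c_2^3$ kills the cubic term in \eqref{GeneralDifferentiationFormulaRankTwo}; the paper's approach avoids the binomial bookkeeping but at the cost of a second round of explicit partial-derivative computations. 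You even note this alternative in your last paragraph, and your remark about the structure of $\boldpi(\partial)$ acting on the product \eqref{ProductFormulaA2RankTwo} (each $\partial_\alpha$ hitting exactly two factors, each factor satisfying $f''=\tfrac14 f$) is precisely what makes the paper's direct computation short.
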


\begin{proof}
The proof of \eqref{ProductFormulaA2RankTwo} is straightforward
and \eqref{DifferentiationFormulaRankTwo} is proved
as \eqref{GeneralDifferentiationFormulaRankTwo}.
\end{proof}


\begin{thebibliography}{99}

\bibitem{A}
\textbf{J.-Ph. Anker\,:}
\textit{La forme exacte de l'estimation fondamentale de Harish-Chandra\/},
C. R. Acad. Sci. Paris S\'er. I Math. 305 (1987), 371--374.

\bibitem{AJ}
\textbf{J.-Ph. Anker \& L. Ji\,:}
\textit{Heat kernel and Green function estimates
on noncompact symmetric spaces\/},
Geom. Funct. Anal. 9, (1999), 1035--1091.

\bibitem{AO}
\textbf{J.-Ph. Anker \& P. Ostellari\,:}
\textit{The heat kernel on noncompact symmetric spaces\/},
in  \textit{Lie groups and symmetric spaces\/}
(in memory of F.I. Karpelevic),
S.G. Gindikin (ed.), Amer. Math. Soc. Transl. (2) 210,
Amer. Math. Soc. (2003), 27--46.

\bibitem{AST}
\textbf{J.-Ph. Anker, Br. Schapira \& B. Trojan\,:}
\textit{Heat kernel and Green function estimates on affine buildings of type $\widetilde{A}_r$\/},
preprint [arXiv:math/0612385]

\bibitem{B}
\textbf{N. Bourbaki\,:}
\textit{Groupes et alg\`ebres de Lie\/}, Ch. 4--6,
Hermann (1968).

\bibitem{C}
\textbf{D.I. Cartwright\,:}
\textit{Spherical harmonic analysis on buildings
of type \ssf$\smash{\widetilde{A}_{\ssf n}}$\ssf},
Monatsh. Math. 133 (2001), 93--109.

\bibitem{CW}
\textbf{D.I. Cartwright \& W. Woess\,:}
\textit{Isotropic random walks in a building
of type \ssf$\smash{\widetilde{A}_{\ssf d}}$\ssf},
Math. Z. 247 (2004), 101--135.

\bibitem{L1}
\textbf{S.P. Lalley\,:}
\textit{Saddle-point approximations and space-time Martin boundary
for nearest-neighbor random walk on a homogeneous tree\/},
J. Theoret. Probab. 4 (1991), 701--723.

\bibitem{L2}
\textbf{S.P. Lalley\,:}
\textit{Finite range random walk on free groups and homogeneous trees\/},
Ann. Probab. 21 (1993), 2087--2130.

\bibitem{M1}
\textbf{I.G. Macdonald\,:}
\textit{Spherical functions on a group of $p$-adic type\/},
Publ. Ramanujan Institute 2, Centre Adv. Study Math., Univ. Madras (1971).

\bibitem{M2}
\textbf{I.G. Macdonald\,:}
\textit{Symmetric functions and Hall polynomials\/},
Clarendon Press, Oxford Univ. Press (1995).

\bibitem{M3}
\textbf{I.G. Macdonald\,:}
\textit{Orthogonal polynomials associated with root systems\/},
S\'eminaire Lotharingien Combinatoire 45 (2000), Article B45a.

\bibitem{M4}
\textbf{I.G. Macdonald\,:}
\textit{Affine Hecke algebras and orthogonal polynomials\/},
Cambridge Tracts Math. 157, Cambridge Univ. Press (2003).

\bibitem{MZ1}
\textbf{A.M. Mantero \& A. Zappa\,:}
\textit{Macdonald formula for spherical functions on affine buildings\/},
Ann. Fac. Sci. Toulouse Math. (6) 20 (2011), 669--758.

\bibitem{MZ2}
\textbf{A.M. Mantero \& A. Zappa\,:}
\textit{Eigenvalues of the vertex set Hecke algebra of an affine building\/},
in \textit{Trends in harmonic analysis\/}
(\textit{XXXV Convegno di Analisi Armonica, Roma, 2011\/}),
M.A. Picardello (ed.), Springer INdAM Ser. (2013).

\bibitem{O}
\textbf{E.M. Opdam\,:}
\textit{On the spectral decomposition of affine Hecke algebras\/},
J. Inst. Math. Jussieu 3 (2004), 531?-648

\bibitem{P1}
\textbf{J. Parkinson\,:}
\textit{Buildings and Hecke algebras\/},
Ph.D. thesis, Univ. Sydney (2005).

\bibitem{P2}
\textbf{J. Parkinson\,:}
\textit{Spherical harmonic analysis on affine buildings\/},
Math. Z. 253 (2006), 571--606.

\bibitem{P3}
\textbf{J. Parkinson\,:}
\textit{Isotropic random walks on affine buildings\/},
Ann. Inst. Fourier 57 (2007), 379--419.

\bibitem{PS}
\textbf{J. Parkinson \& Br. Schapira\,:}
\textit{A local limit theorem for random walks
on the chambers of \,$\Atildetwo$ buildings\/},
in \textit{Random walks, boundaries and spectra\/},
D. Lenz \& al. (eds.),
Progress. Probab. 64, Birkh\"auser (2011), 15--53.

\bibitem{RT}
\textbf{B. R\'emy \& B. Trojan\,:}
\textit{Martin compactifications of affine buildings\/},
preprint [arXiv:2105.14807]

\bibitem{R}
\textbf{M. Ronan\,:}
\textit{Lectures on buildings\/},
Perspectives Math., Academic Press (1989).

\bibitem{SW}
\textbf{L. Saloff-Coste \& W. Woess\,:}
\textit{Transition operators, groups, norms, and spectral radii\/},
Pacific J. Math. 180 (1997), 333--367

\bibitem{S1}
\textbf{Br. Schapira\,:}
\textit{Contributions to
the hypergeometric function theory of Heckman and Opdam\/}
(\textit{sharp estimates, Schwartz space, heat kernel}\ssf),
Geom. Funct. Anal. 18 (2008), 222--250.

\bibitem{S2}
\textbf{Br. Schapira\,:}
\textit{Random walk on a building of type $\Atilder$
and Brownian motion of the Weyl chamber\/},
Ann. Inst. H. Poincar\'e (B) \textit{Probab. Stat.\/} 45 (2009), 289--301. 

\bibitem{T}
\textbf{B. Trojan\,:}
\textit{Heat kernel and Green function estimates on affine buildings\/},
preprint [arXiv:1310.2288v2]

\bibitem{W}
\textbf{W. Woess\,:}
\textit{Random walks on infinite graphs and groups\/},
Cambridge Tracts Math. 138,
Cambridge Univ. Press (2000).

\end{thebibliography}
\end{document}